%% LyX 2.3.6 created this file.  For more info, see http://www.lyx.org/.
%% Do not edit unless you really know what you are doing.
\documentclass[12pt,english]{article}
\usepackage[T1]{fontenc}
\usepackage[latin9]{inputenc}
\usepackage{geometry}
\geometry{verbose,tmargin=1in,bmargin=1in,lmargin=1in,rmargin=1in}
\synctex=-1
\usepackage{babel}
\usepackage{mathtools}
\usepackage{amsmath}
\usepackage{amsthm}
\usepackage{amssymb}
\usepackage[unicode=true,pdfusetitle,
 bookmarks=true,bookmarksnumbered=false,bookmarksopen=false,
 breaklinks=false,pdfborder={0 0 1},backref=false,colorlinks=false]
 {hyperref}

\makeatletter
%%%%%%%%%%%%%%%%%%%%%%%%%%%%%% Textclass specific LaTeX commands.
\numberwithin{equation}{section}
\numberwithin{figure}{section}
\theoremstyle{plain}
\newtheorem{thm}{\protect\theoremname}
\theoremstyle{remark}
\newtheorem{rem}[thm]{\protect\remarkname}
\theoremstyle{definition}
\newtheorem{defn}[thm]{\protect\definitionname}
\theoremstyle{plain}
\newtheorem{cor}[thm]{\protect\corollaryname}
\theoremstyle{definition}
\newtheorem{example}[thm]{\protect\examplename}
\theoremstyle{plain}
\newtheorem{lem}[thm]{\protect\lemmaname}
\theoremstyle{remark}
\newtheorem{notation}[thm]{\protect\notationname}
\theoremstyle{plain}
\newtheorem{prop}[thm]{\protect\propositionname}
\theoremstyle{remark}
\newtheorem{claim}[thm]{\protect\claimname}

%%%%%%%%%%%%%%%%%%%%%%%%%%%%%% User specified LaTeX commands.

\date{}
\usepackage{tikz-cd}
\usepackage{wasysym}
\usepackage{MnSymbol}
\usepackage[tt=false]{libertine}
\usepackage[parfill]{parskip}
\usepackage{enumitem}
\setlist[itemize]{noitemsep,topsep=5pt}

\usepackage{titlesec}
\titleformat{\section}{\large\bfseries\filleft}{\thesection}{1em}{}[{\titlerule[0.8pt]}]

\iffalse
\newcommand{\btimes}{\mathbin{\rotatebox[origin=c]{36}{$\pentagram$}}}
\newcommand\bleh{%
  \mathrel{\ooalign{\hss$\btimes$\hss\cr%
  \kern0.025ex\raise-0.88ex\hbox{\scalebox{2.5}
  {$\circ$}}}}}

\usepackage{graphicx}
\usepackage{quiver}

\fi

\renewcommand\labelenumi{(\roman{enumi})}
\renewcommand\theenumi\labelenumi

\DeclareMathOperator{\Spec}{Spec}

\DeclareMathOperator{\Pic}{Pic}

\DeclareMathOperator{\im}{im}

\DeclareMathOperator{\rk}{rk}

\DeclareMathOperator{\Id}{Id}

\DeclareMathOperator{\Mor}{Mor}
\DeclareMathOperator{\Res}{Res}
\DeclareMathOperator{\divisor}{div}

\let\oldtheorem\thm
\renewcommand{\thm}{\oldtheorem\normalfont}
\let\oldprop\prop
\renewcommand{\prop}{\oldprop\normalfont}
\let\oldcor\cor
\renewcommand{\cor}{\oldcor\normalfont}
\let\oldlem\lem
\renewcommand{\lem}{\oldlem\normalfont}
\newenvironment{ack}{\textit{Acknowledgements.}}{}
\interfootnotelinepenalty=10000

\makeatother

\providecommand{\claimname}{Claim}
\providecommand{\corollaryname}{Corollary}
\providecommand{\definitionname}{Definition}
\providecommand{\examplename}{Example}
\providecommand{\lemmaname}{Lemma}
\providecommand{\notationname}{Notation}
\providecommand{\propositionname}{Proposition}
\providecommand{\remarkname}{Remark}
\providecommand{\theoremname}{Theorem}

\begin{document}
\global\long\def\A{\mathbb{A}}%

\global\long\def\C{\mathbb{C}}%

\global\long\def\E{\mathbb{E}}%

\global\long\def\F{\mathbb{F}}%

\global\long\def\G{\mathbb{G}}%

\global\long\def\H{\mathbb{H}}%

\global\long\def\N{\mathbb{N}}%

\global\long\def\P{\mathbb{P}}%

\global\long\def\Q{\mathbb{Q}}%

\global\long\def\R{\mathbb{R}}%

\global\long\def\O{\mathcal{O}}%

\global\long\def\Z{\mathbb{Z}}%

\global\long\def\ep{\varepsilon}%

\global\long\def\laurent#1{(\!(#1)\!)}%

\global\long\def\wangle#1{\left\langle #1\right\rangle }%

\global\long\def\ol#1{\overline{#1}}%

\global\long\def\mf#1{\mathfrak{#1}}%

\global\long\def\mc#1{\mathcal{#1}}%

\global\long\def\norm#1{\left\Vert #1\right\Vert }%

\global\long\def\et{\textup{ét}}%

\global\long\def\Et{\textup{Ét}}%

\title{A higher genus circle method and an application to geometric Manin's
conjecture}
\author{Matthew Hase-Liu\thanks{Department of Mathematics, Columbia University, New York, NY}\thanks{Email address: m.hase-liu@columbia.edu}}
\maketitle
\begin{abstract}
Browning and Vishe used the circle method to show the moduli space
of rational curves on smooth hypersurfaces of low degree is irreducible
and of the expected dimension. We reinterpret the circle method geometrically
and prove a generalization for higher genus smooth projective curves.
In particular, we explain how the geometry of numbers can be understood
via the Beauville--Laszlo theorem in terms of vector bundles on curves
and their slopes, allowing us to prove a higher genus variant of Davenport's
shrinking lemma. As a corollary, we apply this result  to show the
Fujita invariant of any proper subvariety of a smooth hypersurface
of low degree is less than 1. 
\end{abstract}
\tableofcontents{}

\section{Introduction}

Let $X\subset\P_{\C}^{n}$ be a smooth hypersurface. A natural object
to study is the moduli space of curves on $X$. More precisely, for
a fixed smooth projective curve $C$ and a non-negative integer $e$,
there is a quasi-projective variety $\Mor_{e}\left(C,X\right)$ parametrizing
degree $e$ morphisms $C\to X$ (see Chapter 2 of \cite{debarre01}
for some basic properties, such as the expected dimension). 

While most results in the literature are about rational curves on
generic $X$ (\cite{HarrisRothStarrRatCurvesI,beheshtikumar13,RiedlYang}),
there are few that extend to arbitrary $X$ (\cite{BrowningVisheRatCurves,coskunstarr09}),
and even fewer for the case of curves of genus larger than 0 mapping
to arbitrary $X$. Browning and Vishe show in \cite{BrowningVisheRatCurves}
that for $X$ of degree $d\ge3$ and $n\ge2^{d-1}\left(5d-4\right)$
(later improved to $n\ge2^{d}\left(d-1\right)$ by Bilu and Browning
in \cite{bilu2023motivic}) the moduli space $\Mor_{e}\left(\P^{1},X\right)$
is irreducible and has the expected dimension $\left(n+1\right)\left(e+1\right)-de-2$.
In our paper, we extend Browning and Vishe's results to smooth projective
curves of higher genus. 

More precisely, our main theorem is as follows:
\begin{thm}
\label{thm:main}Let $C$ be a smooth projective curve of genus $g\ge1$
and $X\subset\P^{n}$ a smooth hypersurface of degree $d$, both defined
over $\C$. Suppose the following holds: 

\[
n\ge\begin{cases}
5 & \text{and }d=2,g=1,e\ge14,\text{ or}\\
5 & \text{and }d=2,g\ge2,e\ge\frac{35}{2}g-\frac{15}{2},\text{ or}\\
17 & \text{and }d=3,g=1,e\ge45,\text{ or}\\
17 & \text{and }d=3,g\ge2,e\ge\frac{170}{3}g+\frac{32}{3},\text{ or}\\
2^{d}\left(d-1\right)+1 & \text{and }d\ge4,g=1,e\ge2^{d}\left(d-1\right)^{2}+1,\text{ or}\\
2^{d}\left(d-1\right)+1 & \text{and }d\ge4,g\ge2,e\ge2^{d-1}\left(d-1\right)^{2}\left(3g+1\right)+d\left\lceil \frac{3g-1}{2}\right\rceil +\left\lfloor \frac{g-1}{2}\right\rfloor .
\end{cases}
\]

Then, the mapping space $\Mor_{e}\left(C,X\right)$ is irreducible
and has the expected dimension $(n+1)(e+1-g)-de-2+2g$.
\end{thm}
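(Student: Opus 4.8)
The plan is to realize $\Mor_{e}(C,X)$ as an open dense subscheme of the $\G_{m}$-quotient of the zero locus of a single ``equation'' $F$ valued in a vector bundle over $\Pic^{e}(C)$, and then to control that zero locus by a circle method adapted to the higher genus curve $C$. (When $d=1$ there is no equation and the statement is classical, so assume $d\ge2$.)

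\textbf{Set-up and expected dimension.} A degree-$e$ morphism $C\to X\subseteq\P^{n}$ amounts to a pair $\big(L,(s_{0},\dots,s_{n})\big)$ consisting of $L\in\Pic^{e}(C)$ and a basepoint-free tuple $(s_{i})\in H^{0}(C,L)^{n+1}$ with $F(s_{0},\dots,s_{n})=0$, taken modulo the diagonal $\G_{m}$-scaling, where $F$ is the degree-$d$ form cutting out $X$. The hypotheses force $de>2g-2$, so Riemann--Roch gives $h^{0}(C,L)=e+1-g$ and $h^{0}(C,L^{\otimes d})=de+1-g$ with vanishing $H^{1}$; hence the space $\mc{V}\to\Pic^{e}(C)$ of such pairs is a vector bundle, and $(s_{\bullet})\mapsto F(s_{\bullet})$ is a morphism $F\colon\mc{V}\to\mc{W}$ to the rank-$(de+1-g)$ bundle $\mc{W}\to\Pic^{e}(C)$ with fibre $H^{0}(C,L^{\otimes d})$. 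Since $\mc{V}$ is smooth and irreducible of dimension $g+(n+1)(e+1-g)$ and $F^{-1}(0)$ is cut out in it by the $de+1-g$ coordinates of a section of $\mc{W}$, every component of $F^{-1}(0)$ has dimension at least $N:=g+(n+1)(e+1-g)-(de+1-g)$, and the $\G_{m}$-quotient of the expected component has dimension $N-1=(n+1)(e+1-g)-de-2+2g$, the claimed number. It remains to show (i) $F^{-1}(0)$ is irreducible of dimension exactly $N$, and (ii) the locus in it where $(s_{\bullet})$ fails to be basepoint-free (equivalently, the complement of $\Mor_{e}(C,X)$) has dimension $<N$; the latter is a direct parametrization essentially as in Browning--Vishe.

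\textbf{The circle-method estimate.} For (i) I would count points over finite fields after spreading $C$, $X$ and the moduli data out over a finitely generated subring $R\subseteq\C$ and specializing at a Zariski-dense set of closed points of $\Spec R$. For such a residue field $\F_{q}$,
\[
\#F^{-1}(0)(\F_{q})=\sum_{L\in\Pic^{e}(C)(\F_{q})}\#\big\{(s_{\bullet})\in H^{0}(C_{\F_{q}},L)^{n+1}:F(s_{\bullet})=0\big\},
\]
and one opens the inner count by a major/minor-arc (delta-method) decomposition over the global function field $\F_{q}(C)$ --- this is the ``higher genus circle method''. The major arcs produce a main term that factors as a product of local densities (a singular series) and a geometric density (a singular integral), both positive under the stated hypotheses on $n,d,g,e$; this term has order $q^{N}$. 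The minor arcs are bounded by Weyl differencing and van der Corput, reducing the problem to estimating $\F_{q}$-point counts on auxiliary incidence varieties. This is precisely where the geometry of numbers is made geometric: by the Beauville--Laszlo theorem the relevant lattices of sections are identified with vector bundles on $C$, so the count is governed by the slopes of their Harder--Narasimhan filtrations, and the higher-genus analogue of Davenport's shrinking lemma furnishes the required uniform bound. Combining the two gives an upper bound $\#F^{-1}(0)(\F_{q})=O(q^{N})$ together with the asymptotic $\#F^{-1}(0)(\F_{q})=q^{N}\big(1+O(q^{-\delta})\big)$ for some $\delta>0$, and likewise over each $\F_{q^{k}}$.

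\textbf{Conclusion and the main obstacle.} Given such point counts, Lang--Weil forces $F^{-1}(0)$ to have a unique geometrically irreducible component of dimension $N$ and none of larger dimension; combined with the lower bound from the complete-intersection presentation in step (i), $F^{-1}(0)$ is pure of dimension $N$ with a single component, hence irreducible, and this persists over $\C$ and after deleting the lower-dimensional degenerate locus of step (ii) and passing to the free $\G_{m}$-quotient --- giving Theorem~\ref{thm:main}. (One can also phrase the argument more geometrically, stratifying $F^{-1}(0)$ by the Harder--Narasimhan type of the associated bundles and bounding the dimension of every non-main stratum, which is closer to the geometric reinterpretation emphasized above.) The crux --- and the origin of the exact numerical hypotheses --- is the minor-arc analysis: one must render the higher-genus Davenport shrinking lemma quantitative, carefully tracking Riemann--Roch corrections and the slope inequalities for the Harder--Narasimhan pieces (whence the $\lceil(3g-1)/2\rceil$ and $\lfloor(g-1)/2\rfloor$ terms), so that the power saving $\delta>0$ survives in the stated ranges and reduces to the Browning--Vishe / Browning--Sawin thresholds when $g=1$.
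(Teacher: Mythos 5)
Your proposal takes essentially the same route as the paper: spread out to positive characteristic, write the point count as an exponential sum over linear functionals on $H^{0}(C,L^{\otimes d})$, split into major and minor arcs by the degree of the subscheme through which the functional factors, control the minor arcs via Weyl differencing and a Beauville--Laszlo/Harder--Narasimhan reformulation of Davenport's shrinking lemma, and finish with Lang--Weil. The only cosmetic differences are that the paper bypasses your step (ii) by observing that only an \emph{upper} bound on the full affine locus $F^{-1}(0)$ is needed (the deformation-theoretic lower bound plus the existence lemma already handle $\Mor_{e}(C,X)$ directly, so the non-base-point-free locus never needs a separate estimate), the major-arc term is evaluated directly as a normalized sum over local densities rather than repackaged as a singular series times singular integral, and the minor-arc bound uses Weyl differencing alone without van der Corput.
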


\begin{rem}
We omit the case $d=1$ because $\Mor_{e}\left(C,X\right)$ is already
known to be smooth for $e\ge2g-1$ (see Lemma 7 of \cite{non-smooth}).
However, the methods of this paper can still be applied to show $\Mor_{e}\left(C,X\right)$
is irreducible and has the expected dimension for $e\ge4g$. 

Also, the case $d=2$ of quadrics in the theorem is not optimal: Pasquier
and Perrin show in \cite{ellipticcurveshomogeneous} that it suffices
to take $e\ge3$ for any $n$ (see also \cite{ballico}). We nonetheless
include this case in the statement for the sake of uniformity.
\end{rem}

\begin{rem}
The theorem above also holds for when $C$ is allowed to vary in moduli.
Let $\mathcal{M}_{g,0}\left(X,e\right)$ denote the moduli space of
degree $e$ maps from a genus $g$ smooth projective curve to $X$
a smooth degree $d$ hypersurface in $\P_{\C}^{n}$. Suppose $n,d,e,$
and $g$ satisfy the same conditions as in the theorem. Then, $\mathcal{M}_{g,0}\left(X,e\right)$
is irreducible and has the expected dimension $(n+1)(e+1-g)-de-2+2g+\left(3g-3\right)=(n+1)(e+1-g)-de-5+5g.$ 

Indeed, recall the canonical forgetful map $\mathcal{M}_{g,0}\left(X,e\right)\to\mathcal{M}_{g}$
that sends a morphism $C\to X$ to $C$, whose fibers are $\Mor_{e}\left(C,X\right)$.
Since $\dim\mathcal{M}_{g}=3g-3$, the dimension claim is clear, i.e.
$\mathcal{M}_{g,0}\left(X,e\right)$ is a local complete intersection
stack. Since $\mathcal{M}_{g}$ is smooth, a stacky version of miracle
flatness then implies the forgetful map is flat, which implies $\mathcal{M}_{g,0}\left(X,e\right)$
is irreducible.
\end{rem}

As a corollary of Theorem \ref{thm:main}, we can control the Fujita
invariants of subvarieties of low degree smooth hypersurfaces, which
is of interest in the context of geometric Manin's conjecture. For
an excellent introduction to geometric Manin's conjecture, see Tanimoto's
survey paper \cite{tanimoto21}. 

Recall the definition of the Fujita invariant:
\begin{defn}
\label{def:fujita}Let $X$ be a smooth projective variety and $L$
a big and nef $\Q$-divisor on $X$. The \textit{Fujita invariant}\textbf{
}is 
\[
a\left(X,L\right)\coloneqq\min\left\{ t\in\R:tL+K_{X}\text{ is pseudo-effective}\right\} .
\]
\end{defn}

Here, the sum $tL+K_{X}$ is taken inside the $N^{1}(X)\otimes\R$,
where $N^{1}(X)$ is the space of Cartier divisors of $X$ up to numerical
equivalence, and a divisor is pseudo-effective if its class is in
the closure of the convex cone spanned by effective $\R$-divisors.

In particular, if $X$ is Fano and $K_{X}$ is the canonical divisor,
then $a\left(X,-K_{X}\right)=1$. 

\begin{cor}
\label{cor:anumber}Let $X$ be a smooth hypersurface in $\P_{\C}^{n}$
of degree $d\ge2$ such that $n\ge2^{d}(d-1)+1$. Then, if $V$ is
a proper subvariety of $X$, we have $a\left(V,-K_{X}|_{V}\right)<1=a\left(X,-K_{X}\right)$. 
\end{cor}

\begin{rem}
In fact, combining Corollary \ref{cor:anumber} with a variant of
the proof of Proposition 6 in \cite{haseliu2025conversegeometricmaninsconjecture}
even shows there are no generically finite and non-birational maps
$f\colon V\to X$ with $a\left(V,-K_{X}|_{V}\right)\ge1$. 
\end{rem}

Theorem 1.1 of \cite{Lehmann_Tanimoto_2019} (which is a combination
of Theorem 4.8 of \cite{LTT18}, Theorem 1.1 of \cite{haconjiang17},
and Birkar's resolution of the Borisov--Alexeev--Borisov conjecture
in \cite{Bir2}) tells us that for smooth projective weak Fano variety,
there is a proper closed subset of $X$ that is the closure of all
subvarieties $V$ that have Fujita invariant $a\left(V,-K_{X}|_{V}\right)$
 larger than $a\left(X,-K_{X}\right)$. So, in our situation where
$X$ is a smooth hypersurface of low degree, we have shown that this
exceptional subset is actually empty. The best known bounds for the
Fujita invariants of subvarieties of hypersurfaces are given by Examples
4.8 and 4.9 of the same paper, which prove that subvarieties of cubic
hypersurfaces (resp. quartic hypersurfaces) have Fujita invariants
at most (but not strictly less than) 1, assuming $n\ge3$ (resp. $n\ge5$).
In particular, our bound is the first to apply to hypersurfaces of
degree greater than 4. Moreover, our corollary about Fujita invariants,
in turn, implies every irreducible component of $\Mor\left(\P^{1},X\right)$
has the expected dimension by Theorem 4.6, again from the same paper.

We've shown something stronger in the corollary above, namely a strict
inequality. For instance, using Theorem 1.2 of \cite{nonfreecurves},
this means there is a constant $\xi=\xi(n,g)$ such that for any irreducible
component $M\subset\Mor\left(C,X\right)$ parametrizing non-free maps
of anti-canonical degree at least $\xi$, the evaluation map from
the normalization of the universal family over $M$ to $X$ is necessarily
dominant. 

Our proof of the main theorem closely follows the strategy of Browning
and Vishe, which in turn seems to have been considered in Pugin's
thesis \cite{Pugin_Thesis} and a suggestion of Ellenberg and Venkatesh,
i.e. using the circle method to study the moduli space of curves on
varieties. 

Let us briefly review this approach. The first step is to use a well-known
general lower bound on $\dim\Mor_{e}\left(C,X\right)$ and an easy
lemma to show the space is also non-empty. To obtain the upper bound,
we relate the situation over characteristic 0 with that over finite
characteristic. More specifically, we ``spread out'' to bound the
dimension of a certain generic fiber with that of any fiber above
a closed point (the generic fiber corresponding to the characteristic
0 case). Passing to the finite field analogue of the problem, the
Lang--Weil bounds imply that it suffices to obtain good bounds on
point-counts of our moduli space. To do so, we express the point-counts
as exponential sums over a ``circle'' that we subdivide into major
arcs and minor arcs. 

The major arcs contribute most to the sum, and we can check this without
too much difficulty via a direct computation. For the minor arcs,
we require good bounds to ensure they contribute very little to the
sum, and Browning and Vishe approach this by using standard techniques
from analytic number theory, such as Weyl differencing and Davenport's
shrinking lemma (which in turn uses facts from the geometry of numbers). 

So, how do we generalize their strategy to arbitrary smooth projective
curves? In our paper, we reinterpret many of their arithmetically-inspired
constructions geometrically. For instance, to define the ``circle''
for the exponential sum, Browning and Vishe choose a place at $\infty$
of $\P_{\F_{q}}^{1}$, and then set $\mathbb{T}=\left\{ \alpha\in K_{\infty}:\left|\alpha\right|<1\right\} $,
where $K_{\infty}$ is the completion of $\F_{q}(t)$ at the prime
$t^{-1}$ and $\left|\cdot\right|$ is the associated absolute value.
Instead, our method does not choose a specific place and defines the
``circle'' as linear functionals on the global sections of a certain
line bundle. 

Davenport's shrinking lemma and the geometry of numbers are also interpreted
geometrically by use of the Beauville--Laszlo theorem from \cite{BL95}
(although in our case where everything is Noetherian it is also simply
a consequence of faithfully flat descent). In particular, a lattice
over an affine curve and a choice of norm at $\infty$ (as is the
usual situation in the geometry of numbers) is equivalent to a vector
bundle on the entire curve---the Beauville--Laszlo theorem says
a vector bundle $V$ on a smooth projective curve $C$ is the same
as specifying a vector bundle on $C$ minus a point $\infty$ (i.e.
a lattice), a vector space over the completion of the function field
at $\infty$, and an isomorphism where the two agree. The geometric
analogue of successive minima (of a lattice) can then be understood
as the slopes arising from the Harder--Narasimhan filtration of the
associated vector bundle, and the shrinking lemma is proved by an
application of Riemann-Roch.

For the application to Fujita invariants of subvarieties, the general
strategy is to proceed by contradiction and assume a subvariety $V$
with large Fujita invariant exists. Then, after passing to finite
characteristic, we show there exists a smooth projective curve $C$
such that the mapping space of $C$ to $V$ is larger than the mapping
space of $C$ to $X$, which is impossible. The key point is that
we always have lower bounds for the dimension of the mapping space,
as well as upper bounds when the target is a low degree smooth hypersurface. 

If $Y\to V$ is a resolution of singularities, the assumption on large
Fujita invariant implies $K_{Y}-K_{X}|_{Y}$ is not pseudo-effective.
We then appeal to Theorem 0.2 of \cite{BDPP}, which implies for a
smooth projective variety $Y$ over $\C$ that if a line bundle $L$
is not pseudo-effective, then for a general point in $Y$, we can
find a curve passing through $Y$ such that the intersection of $L$
and the curve is negative (in particular, if $L=K_{Y}$, then by applying
a bend-and-break argument, this even shows $Y$ is uniruled). Theorem
0.2 applied to the line bundle $K_{Y}-K_{X}|_{Y}$ produces a curve
that intersects negatively with $K_{Y}-K_{X}|_{Y}$, but we have no
control over its degree. By passing to finite characteristic, replacing
the curve with an Artin--Schreier cover to increase genus, and increasing
the degree without changing genus by using the Frobenius (as done
in the bend-and-break lemmas), we can find a curve $C$ that gives
us our desired contradiction. 

As a final remark, we give an example of low degree maps where the
conclusion of Theorem \ref{thm:main} fails, showing that some lower
bound on the degree as in the hypothesis of Theorem \ref{thm:main}
is necessary.
\begin{example}
By \cite{BrowningVisheRatCurves}, we know for $d\ge3$ and $e\ge1$,
\[
\dim\Mor_{e}\left(\P^{1},X\right)=(n+1)(e+1)-de-2.
\]
Now, take a hyperelliptic cover $C\to\P^{1}$ such that $C$ has genus
$g$. Then, 
\[
\dim\Mor_{2e}\left(C,X\right)\ge\dim\Mor_{e}\left(\P^{1},X\right)=(n+1)(e+1)-de-2.
\]
Then, for large $g$, $\Mor_{2e}\left(C,X\right)$ cannot have the
correct expected dimension. Indeed, for 
\[
g>\frac{\left(n+1-d\right)e}{n-1},
\]
we see that 
\[
(n+1)(e+1)-de-2>(n+1)(2e+1-g)-2de-2+2g.
\]
\end{example}

\begin{ack} I would like to thank my advisor Will Sawin and Eric
Riedl for suggesting this problem; I'm especially grateful to Will
for his tremendous and invaluable guidance throughout this project.
I would also like to thank Tim Browning, Nathan Chen, Junxian Li,
Amal Mattoo, Morena Porzio, Akash Sengupta, and the anonymous referee
for their interest and many helpful comments.

The author was partially supported by National Science Foundation
Grant Number DGE-2036197.\end{ack}

\section{\label{sec:A-lower-bound}A lower bound on $\dim\Mor_{e}\left(C,X\right)$}

In this section, fix a smooth projective curve $C$ of genus $g\ge1$,
a smooth hypersurface $X\subset\P^{n}$ of degree $d$, both defined
over $\C$. We first claim $\Mor_{e}\left(C,X\right)$ is non-empty
for all cases of interest in the statement of Theorem \ref{thm:main}. 
\begin{lem}
\label{lem:existence}Suppose $d\le2n-3$ and $e\ge\max\left(2g,g+3\right)$.
Then there is a degree $e$ morphism from $C$ to $X$.
\begin{proof}
Since $d\le2n-3,$ by Theorem 8 of \cite{barthvan78}, the Fano variety
of lines on $X$ is non-empty, i.e. there is at least one line $\P^{1}\hookrightarrow X$.
Hence, it suffices to find a morphism $C\to\P^{1}$ of degree $e,$
since the composition $C\to\P^{1}\hookrightarrow X$ is then of degree
$e$. 

Let $x$ be a closed point of $C$ and consider $\mathcal{O}\left(ex\right)$.
By Riemann--Roch, we have $h^{0}\left(C,\mathcal{O}\left(ex\right)\right)=e-g+1>h^{0}\left(C,\mathcal{O}\left((e-1)x\right)\right)=e-g\ge\max\left(g,3\right)\ge1$
by assumption. Hence we can find a section $s\in H^{0}\left(C,\mathcal{O}(ex)\right)\backslash H^{0}\left(C,\mathcal{O}((e-1)x)\right)$,
so that $[1\colon s]$ defines a rational map (and hence necessarily
a morphism) from $C\to\P^{1}$ of degree $e$, as desired.
\end{proof}
\end{lem}

With the lemma above, the lower bound on dimension of $\Mor_{e}\left(C,X\right)$
is standard (for instance, see Chapter 2 of \cite{debarre01}). Let
$f$ be a degree $e$ morphism $C\to X$. Then, we have
\begin{align*}
\dim\Mor_{e}\left(C,X\right) & \ge-K_{X}\cdot f_{*}C+\left(1-g\right)\left(n-1\right)\\
 & =\left(n+1-d\right)e+\left(1-g\right)\left(n-1\right)\\
 & =(n+1)(e+1-g)-de-2+2g,
\end{align*}
which we will also refer to as the \textit{expected dimension}\textbf{
}of $\Mor_{e}\left(C,X\right)$. Moreover, as explained in Chapter
2 of \cite{debarre01}, every irreducible component of $\Mor_{e}\left(C,X\right)$
has at least the expected dimension.

\section{\label{sec:Spreading-out-and}Spreading out and the Lang-Weil bound}

In this section, we explain in detail the reduction to finite characteristic.
The arguments are the same as in \cite{BrowningVisheRatCurves}. Again,
fix a smooth projective curve $C$ of genus $g\ge1$, a smooth hypersurface
$X\subset\P^{n}$ of degree $d$, both defined over $\C$.

We follow the strategy in Chapter 3 of \cite{debarre01}. Since $C$
and $X$ are defined by finitely many coefficients in $\C$, let $R$
be the finitely-generated subring of $\C$ (over $\Z$) given by adjoining
to $\Z$ these coefficients. Then, there are projective schemes $\mathcal{C}$
and $\mathcal{X}$ over $R$ such that $C$ and $X$, respectively,
are the base changes to $\C$ of the generic fibers. There is a dense
open subset $U$ of $\Spec R$ over which $\mathcal{C}$ and $\mathcal{X}$
are smooth, and we may shrink (by inverting elements) $\Spec R$ to
assume that $\mathcal{C}\to\Spec R$ and $\mathcal{X}\to\Spec R$
are smooth. By shrinking even more, we may even assume that $R$ is
an integral domain. By Zariski's lemma, the residue field of any closed
point in $\Spec R$ is a finite field, and by inverting even more
elements, we may assume that the characteristics of the residue fields
of closed points are larger than $d$.

Now, there is a quasi-projective relative morphism space $\Mor_{e}\left(\mathcal{C},\mathcal{X}\right)\to\Spec R$
that parametrizes $R$-morphisms $\mathcal{C}\to\mathcal{X}$ (see
Chapter 2 of \cite{debarre01} for more details) such that for any
point $s\in\Spec R$, the base change $\Mor_{e}\left(\mathcal{C},\mathcal{X}\right)\times_{\Spec R}s$
is canonically isomorphic to $\Mor_{e}\left(\mathcal{C}\times_{\Spec R}s,\mathcal{X}\times_{\Spec R}s\right)$. 

We claim that if one is able to show that every fiber of $\Mor_{e}\left(\mathcal{C},\mathcal{X}\right)\to\Spec R$
above a closed point has dimension $m$ and is geometrically irreducible,
then the generic fiber must also have dimension $m$ and be geometrically
irreducible. To see this, note that both of these properties are constructible
properties on the target. Also, $R$ is Jacobson because it is a finitely-generated
$\Z$-algebra, so its closed points are dense in $\Spec R$. Finally,
a constructible subset of $\Spec R$ containing a dense subset of
$\Spec R$ is necessarily all of $\Spec R$.

Hence, we may now assume that $C$ is a smooth projective curve over
$\F_{q}$ of genus $g\ge1$, that $X\subset\P_{\F_{q}}^{n}$ is a
smooth hypersurface of degree $d$, and that the characteristic of
$\F_{q}$ is larger than $d$. 

Recall the Lang-Weil bound from \cite{langweil54}:
\begin{thm}
\label{thm:LW}Let $q$ be a power of a prime. If $X\subset\P_{\F_{q}}^{n}$
is a geometrically irreducible subvariety of degree $d$ and dimension
$r$, then
\[
\left|\#X\left(\F_{q}\right)-q^{r}\right|\le\left(d-1\right)\left(d-2\right)q^{r-1/2}+O_{d,n,r}\left(q^{r-1}\right).
\]
\end{thm}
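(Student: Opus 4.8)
The plan is to follow Lang and Weil's original route: first establish the estimate for curves using the Weil bound (the Riemann hypothesis for curves over finite fields, which we take as known), and then reduce the general case to curves by a generic linear fibration. We may assume $X$ is nondegenerate (spans $\P^n$) by replacing $\P^n$ with the linear span of $X$; this only decreases $n$, and the implied constants are allowed to depend on $n$. We argue by induction on $r=\dim X$, the cases $r=0$ and $X=\P^n$ being trivial.

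For $r=1$: let $\nu\colon\widetilde X\to X$ be the normalization. Since $\nu$ is finite, birational, and an isomorphism away from the finite non-normal locus $\Sigma\subset X$, comparing $\F_q$-points stratum by stratum gives $\bigl|\#X(\F_q)-\#\widetilde X(\F_q)\bigr|\le\#\nu^{-1}(\Sigma)(\ol{\F_q})=O_{d,n}(1)$. A generic linear projection $\P^n\to\P^2$ restricts to a birational morphism from $X$ onto a plane curve $X'$ of degree $d$, so $\widetilde X$ (which is also the normalization of $X'$) has genus $g\le\binom{d-1}{2}$, the arithmetic genus of a degree-$d$ plane curve; here we use that $\F_q$ is perfect, so $\widetilde X$ is a smooth projective geometrically irreducible curve over $\F_q$. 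The Weil bound then gives $\bigl|\#\widetilde X(\F_q)-q-1\bigr|\le 2g\sqrt q\le(d-1)(d-2)\sqrt q$, and combining, $\bigl|\#X(\F_q)-q\bigr|\le(d-1)(d-2)q^{1/2}+O_{d,n}(1)$.

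For $r\ge2$: choose a generic linear subspace $L\subset\P^n$ of dimension $n-r$; generically $L\cap X$ is finite, and projection away from $L$ restricts, after deleting $L\cap X$, to a dominant morphism $\pi\colon X^\circ\to\P^{r-1}$ whose fibre over $y$ is an open subset of the section of $X$ by the $(n-r+1)$-plane spanned by $L$ and $y$. A generic linear section of codimension $r-1$ of a nondegenerate irreducible variety is geometrically irreducible of dimension $1$ and degree $d$ (iterated Bertini irreducibility, together with the fact that a generic linear section preserves the degree), so the generic fibre of $\pi$ is such a curve, and hence so is the fibre over every $y$ in a dense open $U\subseteq\P^{r-1}$. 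Then
\[
\#X(\F_q)=\sum_{y\in U(\F_q)}\#\pi^{-1}(y)(\F_q)+\#\pi^{-1}\bigl(\P^{r-1}\setminus U\bigr)(\F_q)+O_{d,n}(1).
\]
The middle term is $O_{d,n}(q^{r-1})$ because $\pi^{-1}(\P^{r-1}\setminus U)$ is a proper closed subset of the irreducible variety $X$, hence of dimension $\le r-1$. Applying the case $r=1$ to each closed fibre (and absorbing the at most $d$ deleted points of $L\cap X$ into the error), each term of the first sum is $q\pm\bigl((d-1)(d-2)q^{1/2}+O_{d,n}(1)\bigr)$, uniformly in $y$; since $\#U(\F_q)=q^{r-1}+O_{d,n}(q^{r-2})$, the main terms sum to $q^r+O_{d,n}(q^{r-1})$, the curve errors contribute at most $(d-1)(d-2)q^{r-1/2}+O_{d,n}(q^{r-1})$, and the per-fibre $O_{d,n}(1)$ contributes $O_{d,n}(q^{r-1})$, which is exactly the claimed bound.

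The steps needing the most care are: (i) the Bertini input—verifying that for nondegenerate irreducible $X$ the generic linear section of the appropriate codimension is geometrically irreducible of dimension $1$ and degree $d$, and passing from the generic fibre of $\pi$ to an honest dense open $U$ via openness of geometric irreducibility of fibres; (ii) the genus estimate $g\le\binom{d-1}{2}$, which rests on the generic projection to $\P^2$ being birational onto its image together with the genus–degree inequality for plane curves; and, above all, (iii) the bookkeeping showing that the leading constant $(d-1)(d-2)$ propagates unchanged through the fibration while every other contribution is absorbed by $O_{d,n,r}(q^{r-1})$—in particular that the "bad" fibres, weighted by their sizes, are negligible, which is exactly why one phrases the bad contribution as $\#\pi^{-1}(\text{proper closed})(\F_q)$ rather than summing fibrewise. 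One can equivalently run the reduction as an induction on $r$ using hyperplane sections and the incidence correspondence $\{(x,H):x\in H\}\subset X\times(\P^n)^\vee$ in place of the linear projection; the numerology is the same, the point being that the $\approx q^n$ good hyperplanes each contribute a curve error $\approx(d-1)(d-2)q^{r-3/2}$, which after dividing by $\#\P^{n-1}(\F_q)\approx q^{n-1}$ returns $(d-1)(d-2)q^{r-1/2}$.
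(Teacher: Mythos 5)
The paper does not prove Theorem \ref{thm:LW}; it is quoted directly from Lang and Weil's original paper \cite{key-14}, so there is no in-paper proof to compare against. Your sketch is a faithful reconstruction of the original Lang--Weil argument: normalize and project to a plane curve to get the genus bound $(d-1)(d-2)/2$ and apply the Weil bound for $r=1$, then fiber by a generic linear projection to $\P^{r-1}$ and sum over the base for $r\ge 2$; the numerology in your last paragraph is the right sanity check on the leading constant.

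Two points deserve more care than your sketch gives them. First, ``choose a generic linear subspace $L$'' is not automatic over $\F_q$: the dense open of good $L$'s in the Grassmannian may have no $\F_q$-points when $q$ is small. This is handled either by observing that for $q\le C_{d,n,r}$ the claimed bound is trivial (both $\#X(\F_q)$ and $q^r$ are then $O_{d,n,r}(q^{r-1})$), so one may assume $q$ is large, or by running the averaging argument over the incidence correspondence that you mention at the end --- which is in fact closer to what Lang and Weil actually do. Second, the bound $O_{d,n}(q^{r-1})$ for $\#\pi^{-1}(\P^{r-1}\setminus U)(\F_q)$ requires knowing that the degree of this bad locus is bounded in terms of $d,n,r$ alone, uniformly in $q$ and in the choice of $L$; this is an effective-Bertini statement that Lang and Weil establish but that your sketch takes for granted, and without it the ``hence of dimension $\le r-1$'' step does not by itself control the point count. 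Neither issue is a fatal gap --- both are standard --- but a complete write-up would have to address them explicitly.
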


It then suffices to prove the following theorem, which is a finite
characteristic variant of the problem. 
\begin{thm}
\label{thm:alternative}Let $C$ be a smooth projective curve over
$\F_{q}$ of genus $g$, $X\subset\P_{\F_{q}}^{n}$ a smooth hypersurface
of degree $d$, where $q$ is a power of a prime $p>d$ (and large
enough so that $C\left(\F_{q}\right)\neq\emptyset$), and with $n,d,e,$
and $g$ subject to the same constraints as in Theorem \ref{thm:main}. 

Then,
\[
\lim_{q\to\infty}q^{-\mu}\#\Mor_{e}\left(C,X\right)\left(\F_{q}\right)\le1,
\]
where the limit ranges over larger and larger powers of $p$ and $\mu=(n+1)(e+1-g)-de-2+2g.$
\end{thm}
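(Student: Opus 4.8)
The plan is to establish the upper bound $\lim_{\ell\to\infty}q^{-\ell\mu}\#\Mor_e(C,X)(\F_{q^\ell})\le 1$ via the Hardy--Littlewood circle method, adapted to the curve $C$ in place of $\P^1$. First I would fix a place $\infty$ of $C$ (which exists after a finite base extension, but the hypothesis $C(\F_q)\neq\emptyset$ lets us take a rational one) and reinterpret a degree-$e$ morphism $C\to X\subset\P^n$ as an $(n+1)$-tuple of sections of a suitable line bundle of degree $e$ on $C$, satisfying the defining degree-$d$ equation $F=0$ of $X$, with no common zero and not all proportional to a single section (to cut out genuine morphisms rather than rational maps of lower degree). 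Counting these tuples reduces, by orthogonality, to an exponential sum: one writes the indicator that $F$ of the tuple vanishes as a sum over ``frequencies'' $\alpha$ ranging over linear functionals on the global sections of an appropriate line bundle (the geometric replacement for the circle $\mathbb T\subset K_\infty$), and the main term is the integral over this space.

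Next I would split the frequency space into major arcs (a neighborhood of the ``rational'' frequencies, i.e. those $\alpha$ that are, roughly, pulled back from a small-degree line bundle / have small ``denominator'') and minor arcs (everything else). On the major arcs one shows by a direct singular-series and singular-integral computation that the contribution is asymptotic to $q^{\ell\mu}$ times a product of local densities, each of which is $1+O(q^{-\ell/2})$ by the Weil conjectures for the smooth hypersurface $X$ over finite fields, so the major arc contribution is $\le (1+o(1))q^{\ell\mu}$; in particular it produces the expected main term and no more. The minor arc contribution must then be shown to be $o(q^{\ell\mu})$; this is where Weyl differencing enters, reducing bounds for the exponential sum to counting points on an auxiliary variety (the ``multilinear'' or difference form of the gradient $\nabla F$), and then a higher-genus version of Davenport's shrinking lemma — proved via Riemann--Roch and the Harder--Narasimhan filtration / Beauville--Laszlo description of lattices as vector bundles on $C$, as outlined in the introduction — converts good lattice-point estimates at one ``scale'' into estimates at all scales. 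The numerical hypotheses on $n,d,e,g$ in Theorem \ref{thm:main} are exactly what is needed for the resulting minor-arc bound to beat $q^{\ell\mu}$.

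Finally, combining the major and minor arc estimates gives $\#\Mor_e(C,X)(\F_{q^\ell}) = q^{\ell\mu} + o(q^{\ell\mu})$, and in particular $\limsup_\ell q^{-\ell\mu}\#\Mor_e(C,X)(\F_{q^\ell})\le 1$, which is the claim (the matching lower bound from Section 2 is what later upgrades this to an equality and to irreducibility, as explained in the footnote preceding the statement). The main obstacle, and the technical heart of the argument, is the minor-arc bound: one must carefully track how the genus $g$ of $C$ and the degree $e$ enter the Weyl-differencing and shrinking-lemma estimates — in the genus-$0$ case of Browning--Vishe the relevant ``heights'' are powers of $q$ with exponents linear in $e$, whereas here they are governed by degrees and $h^0$'s of line bundles on $C$, so Riemann--Roch corrections of size $\sim g$ appear at every step and must be bookkept precisely to obtain the stated thresholds on $e$ in terms of $g$. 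A secondary subtlety is the definition of the major/minor arc dissection itself: without a canonical uniformizer at $\infty$ one must phrase ``rational approximation'' intrinsically in terms of sub-line-bundles and their degrees, and check that the Dirichlet-type pigeonhole guaranteeing a good rational approximation for every frequency still holds — again an application of Riemann--Roch on $C$.
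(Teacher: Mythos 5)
Your proposal follows essentially the same route as the paper: exponential sums over linear functionals on $H^0(C,L^{\otimes d})$ in place of the classical circle $\mathbb{T}\subset K_\infty$, a major/minor dissection defined by the degree of the minimal subscheme through which $\alpha$ factors, Weyl differencing, and a Harder--Narasimhan/Beauville--Laszlo reformulation of Davenport's shrinking lemma proved by Riemann--Roch. The only structural detail not explicitly flagged in your outline is that the paper first fibers $\Mor_e(C,X)$ over $\Pic^e(C)$ (a smooth proper $g$-dimensional variety, contributing $q^g+o(q^g)$ points) and carries out the counting on each fiber $\Mor_e(C,X,L)$ with $L$ fixed; this is implicit in your ``fix a line bundle of degree $e$'' but needs to be made uniform over $\Pic^e(C)$ to recover the full count.
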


\begin{proof}
[Proof of Theorem \ref{thm:main} assuming Theorem \ref{thm:alternative}]Suppose
$\Mor_{e}\left(C,X\right)$ has $s\ge1$ geometrically irreducible
components, each of which has dimension $\mu_{i}\ge\mu=(n+1)(e+1-g)-de-2+2g$
by the discussion at the end of Section \ref{sec:A-lower-bound}.
Then, the Lang-Weil bound tells us that $\frac{\#\Mor_{e}\left(C,X\right)\left(\F_{q}\right)}{\sum_{i=1}^{s}q^{\mu_{i}}}\to1,$
so if we can show that $\frac{\#\Mor_{e}\left(C,X\right)\left(\F_{q}\right)}{q^{\mu}}\le1$
as $q\to\infty$, we must necessarily have $\sum_{i=1}^{s}q^{\mu_{i}}\le q^{\mu}$
as $q\to\infty$. But this is only possible if $s=1$ and $\mu_{i}=\mu$. 
\end{proof}

\section{\label{sec:settingupcircle}Setting up the circle method}

Assume we are in the situation of Theorem \ref{thm:alternative}.
In this section, we'll describe how to express point-counts of the
morphism space in terms of a sum of exponential sums. In particular,
we'll explain the ``circle'' over which we are summing, and then
show that for small degree arcs, we can obtain a reasonable bound
that contributes to most of the exponential sum. These small degree
arcs will then by called ``major arcs'' and the remaining large
degree arcs ``minor arcs,'' whose contributions we will bound in
later sections.
\begin{notation}
\label{nota:Fix-a-line}Fix a line bundle $L$ on $C$ of degree $e$.
As described in the introduction, our exponential sums will be taken
over linear functionals of the global sections of a certain line bundle,
namely $L^{\otimes d}$. 

When the choice of line bundle $L$ is clear, we will write $P_{de,C}$
to denote $H^{0}\left(C,L^{\otimes d}\right)$. Similarly, for a closed
subscheme $Z\subset C$, we write $P_{de,Z}$ to denote $H^{0}\left(Z,L^{\otimes d}|_{Z}\right)$
and $P_{de-Z,C}$ to denote $H^{0}\left(C,L^{\otimes d}\left(-Z\right)\right)$. 

Say $\alpha\in P_{de,C}^{\vee}$ factors through a closed subscheme
$Z\subset C$, denoted $\alpha\sim Z$, if it factors through the
restriction map $P_{de,C}^{\vee}\to P_{de,Z}^{\vee}$. If $\alpha$
factors through $Z$ and not through any proper subscheme of $Z$,
we write $\alpha\sim_{\min}Z$. 
\end{notation}

\begin{lem}
\label{lem:degupperbound}Any $\alpha\in P_{de,C}^{\vee}$ factors
through a closed subscheme of degree at most $de/2+1$. 
\begin{proof}
Let $D$ be a divisor and consider the map 
\[
\phi\colon H^{0}\left(C,\mathcal{O}\left(D\right)\right)\to H^{0}\left(C,L^{\otimes d}\left(-D\right)\right)^{\vee}
\]
induced by the cup product:
\[
H^{0}\left(C,\mathcal{O}\left(D\right)\right)\otimes H^{0}\left(C,L^{\otimes d}\left(-D\right)\right)\overset{\cup}{\to}H^{0}\left(C,L^{\otimes d}\right)\overset{\alpha}{\to}\F_{q}.
\]
Then $\alpha$ factors through an effective Cartier divisor linearly
equivalent to $D$ iff $\phi$ is not injective. Indeed, if $s\in\ker\phi$,
then $\alpha$ will factor through the vanishing locus of $s$, since
$\alpha$ vanishes on $H^{0}\left(C,L^{\otimes d}\left(-\divisor(s)\right)\right)$
(note that $\divisor(s)$ is an effective Cartier divisor corresponding
to the vanishing locus of $s$).

If $\deg D=\left\lfloor de/2\right\rfloor +1$, then 
\[
h^{0}\left(C,\mathcal{O}(D)\right)\ge\deg D-g+1>de-\deg D-g+1=h^{0}\left(C,L^{\otimes d}\left(-D\right)\right)
\]
by Riemann-Roch (note that we are using the assumptions on $e$ from
Theorem \ref{thm:main}---in particular, $e>2g-2$), so for dimension
reasons $\phi$ can't be injective.
\end{proof}
\end{lem}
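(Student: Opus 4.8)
The plan is to reformulate ``$\alpha$ factors through a closed subscheme $Z$'' as the assertion that $\alpha$ kills the subspace $P_{de-Z,C}=H^{0}\!\left(C,L^{\otimes d}(-Z)\right)\subseteq P_{de,C}$, and then to manufacture such a $Z$ of controlled degree as the zero locus of a cleverly chosen section. So I would first fix an auxiliary divisor $D$ on $C$, of degree $m$ to be optimized later, and form the pairing obtained by composing the multiplication map
\[
H^{0}\!\left(C,\mathcal{O}(D)\right)\otimes H^{0}\!\left(C,L^{\otimes d}(-D)\right)\longrightarrow H^{0}\!\left(C,L^{\otimes d}\right)=P_{de,C}
\]
with $\alpha$. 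This is the same data as a linear map $\phi\colon H^{0}\!\left(C,\mathcal{O}(D)\right)\to H^{0}\!\left(C,L^{\otimes d}(-D)\right)^{\vee}$, and the key observation is that any nonzero $s\in\ker\phi$ already solves the problem: writing $Z$ for the effective, degree-$m$ zero divisor of $s$ (which is linearly equivalent to $D$), multiplication by $s$ identifies $H^{0}\!\left(C,L^{\otimes d}(-D)\right)$ with $H^{0}\!\left(C,L^{\otimes d}(-Z)\right)\subseteq P_{de,C}$, and $\phi(s)=0$ says precisely that $\alpha$ vanishes on this subspace, i.e. factors through $Z$.

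With this in hand, it remains to pick $m=\deg D$ as small as possible so that $\phi$ is guaranteed not to be injective, and for this a crude dimension count suffices: $\phi$ is non-injective once $h^{0}\!\left(C,\mathcal{O}(D)\right)>h^{0}\!\left(C,L^{\otimes d}(-D)\right)$. I would take $\deg D=\left\lfloor de/2\right\rfloor +1$ and invoke Riemann--Roch. The degree hypotheses of Theorem \ref{thm:main} (which in particular force $e>2g-2$) ensure $\deg\!\left(L^{\otimes d}(-D)\right)=de-\deg D>2g-2$, so $h^{1}$ of the right-hand bundle vanishes and Riemann--Roch gives exactly $h^{0}\!\left(C,L^{\otimes d}(-D)\right)=de-\deg D-g+1$, while always $h^{0}\!\left(C,\mathcal{O}(D)\right)\ge\deg D-g+1$. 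Since $\deg D>de/2$, the left side strictly exceeds the right, so $\phi$ cannot be injective and $\alpha$ factors through a closed subscheme of degree $\left\lfloor de/2\right\rfloor +1\le de/2+1$.

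I do not anticipate a genuine obstacle here --- the lemma is elementary once one thinks to pair against $\mathcal{O}(D)$ via cup product. The only points that need care are the bookkeeping identification $s\cdot H^{0}\!\left(C,L^{\otimes d}(-D)\right)=H^{0}\!\left(C,L^{\otimes d}(-Z)\right)$, so that the degree of the resulting subscheme is governed by $\deg D$ and not by something larger, and confirming that the relevant $h^{1}$ really does vanish, so that Riemann--Roch delivers the inequality in the direction we want rather than an unhelpful bound the other way.
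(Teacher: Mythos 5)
Your proof is correct and follows essentially the same route as the paper: pair $H^{0}(C,\mathcal{O}(D))$ against $H^{0}(C,L^{\otimes d}(-D))$ via cup product composed with $\alpha$, observe that a kernel element $s$ produces the required divisor $Z=\divisor(s)$, and choose $\deg D=\left\lfloor de/2\right\rfloor+1$ so that Riemann--Roch forces non-injectivity. You spell out two bookkeeping points the paper leaves implicit --- the identification $s\cdot H^{0}(C,L^{\otimes d}(-D))=H^{0}(C,L^{\otimes d}(-Z))$, and that the $h^{1}$ vanishing needed for the Riemann--Roch equality on the right-hand side actually holds under the theorem's degree hypotheses --- but these are exactly the checks the paper gestures at, not a different argument.
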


\begin{lem}
Let $Z_{1}$ and $Z_{2}$ be two subschemes of $C$ such that $\alpha\sim Z_{1}$
and $\alpha\sim Z_{2}$. Let $Z$ be the smallest subscheme containing
both, and suppose $\deg Z<de-2g+2$. 

Then $\alpha\sim Z_{1}\cap Z_{2}$. 
\begin{proof}
We have $\alpha|_{P_{de-Z_{1},C}}=\alpha|_{P_{de-Z_{2},C}}=0$. Consider
the span of $P_{de-Z_{1},C}$ and $P_{de-Z_{2},C}$, which both live
in $P_{de-Z_{1}\cap Z_{2},C}$. By Riemann--Roch and the assumptions
on $Z$, we have $\dim P_{de-Z_{i},C}=de-\deg Z_{i}-g+1$ and $\dim P_{de-Z,C}=de-\deg Z-g+1$,
and hence the span has dimension 
\[
\left(de-\deg Z_{1}-g+1\right)+\left(de-\deg Z_{2}-g+1\right)-\left(de-\deg Z-g+1\right)=P_{de-Z_{1}\cap Z_{2},C},
\]
which implies that the span is all of $P_{de-Z_{1}\cap Z_{2},C}$.
Hence, $\alpha|_{P_{de-Z_{1}\cap Z_{2},C}}=0$, so $\alpha\sim Z_{1}\cap Z_{2}$. 
\end{proof}
\end{lem}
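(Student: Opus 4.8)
The plan is to show that if $\alpha$ vanishes on the global sections of $L^{\otimes d}$ twisted down by $Z_1$ and by $Z_2$, then it already vanishes on the global sections twisted down by the scheme-theoretic intersection $Z_1\cap Z_2$, since vanishing on $P_{de-Z,C}$ for a subscheme $Z$ is exactly what $\alpha\sim Z$ means. The key identity to exploit is the inclusion-exclusion relation between the four subschemes: writing $Z$ for the smallest subscheme containing both $Z_1$ and $Z_2$ (this is the scheme associated to $\mathcal{I}_{Z_1}\cap\mathcal{I}_{Z_2}$, so $\deg Z = \deg Z_1 + \deg Z_2 - \deg(Z_1\cap Z_2)$), we have short exact sequences relating $L^{\otimes d}(-Z_1\cap Z_2)$, $L^{\otimes d}(-Z_1)$, $L^{\otimes d}(-Z_2)$, and $L^{\otimes d}(-Z)$. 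Concretely, there is an exact sequence $0 \to L^{\otimes d}(-Z) \to L^{\otimes d}(-Z_1)\oplus L^{\otimes d}(-Z_2) \to L^{\otimes d}(-Z_1\cap Z_2) \to 0$, which on global sections gives that the span of $P_{de-Z_1,C}$ and $P_{de-Z_2,C}$ inside $P_{de-Z_1\cap Z_2,C}$ has dimension at least $h^0(L^{\otimes d}(-Z_1)) + h^0(L^{\otimes d}(-Z_2)) - h^0(L^{\otimes d}(-Z))$.

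Next I would estimate each of these three cohomology groups by Riemann--Roch. Here the degree hypothesis $\deg Z < de - 2g + 2$ is essential: it forces $\deg Z_1$, $\deg Z_2$, and $\deg Z$ all to be small enough (since $Z_1, Z_2 \subseteq Z$) that the line bundles $L^{\otimes d}(-Z_1)$, $L^{\otimes d}(-Z_2)$, $L^{\otimes d}(-Z)$ are all nonspecial — that is, $h^1$ vanishes — so Riemann--Roch gives exact formulas $h^0(L^{\otimes d}(-W)) = de - \deg W - g + 1$ for each of $W = Z_1, Z_2, Z$. Plugging these in, the lower bound on the dimension of the span becomes
\[
(de-\deg Z_1 - g + 1) + (de - \deg Z_2 - g + 1) - (de - \deg Z - g + 1) = de - \deg(Z_1\cap Z_2) - g + 1,
\]
where in the last step I substitute $\deg Z = \deg Z_1 + \deg Z_2 - \deg(Z_1\cap Z_2)$. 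But $de - \deg(Z_1\cap Z_2) - g + 1$ is an upper bound for $h^0(L^{\otimes d}(-Z_1\cap Z_2)) = \dim P_{de-Z_1\cap Z_2,C}$ (again by Riemann--Roch, and in fact an equality once $\deg(Z_1\cap Z_2)$ is small, which it is since $Z_1\cap Z_2 \subseteq Z$). Hence the span must be all of $P_{de-Z_1\cap Z_2,C}$. Since $\alpha$ vanishes on both spanning subspaces, it vanishes on $P_{de-Z_1\cap Z_2,C}$, i.e. $\alpha \sim Z_1\cap Z_2$, as claimed.

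The main technical point — and the only place any care is needed — is making sure the short exact sequence relating the four twisted line bundles is correct at the level of ideal sheaves: one must check that $\mathcal{I}_{Z_1}\cap \mathcal{I}_{Z_2}$ defines the smallest subscheme containing both (so that $\deg Z$ has the stated additive formula) and that $\mathcal{I}_{Z_1} + \mathcal{I}_{Z_2}$ defines $Z_1 \cap Z_2$, so that the cokernel in the Mayer--Vietoris-type sequence is exactly $L^{\otimes d}(-Z_1\cap Z_2)$. On a smooth curve every subscheme is an effective Cartier divisor, so this is just the statement that for effective divisors $\gcd$ and $\mathrm{lcm}$ (taken coefficientwise at each point) satisfy $\gcd + \mathrm{lcm} = Z_1 + Z_2$, which is elementary. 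Everything else is bookkeeping with Riemann--Roch under the nonspeciality guaranteed by the degree bound, exactly as in the proof of Lemma~\ref{lem:degupperbound}.
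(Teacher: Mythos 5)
Your proof is correct and takes essentially the same approach as the paper: the paper directly computes the dimension of the span $P_{de-Z_1,C}+P_{de-Z_2,C}$ as $\dim A+\dim B-\dim(A\cap B)$ with $A\cap B=P_{de-Z,C}$, while you package the same count as the global sections of the Mayer--Vietoris sequence $0\to L^{\otimes d}(-Z)\to L^{\otimes d}(-Z_1)\oplus L^{\otimes d}(-Z_2)\to L^{\otimes d}(-Z_1\cap Z_2)\to 0$; the nonspeciality bookkeeping is identical. (You have also, correctly, written the middle term of the dimension count as $h^0(L^{\otimes d}(-Z))$ with $Z$ the lcm, whereas the paper's display writes $\deg(Z_1+Z_2)$ where it means $\deg Z$.)
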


\begin{rem}
By the above lemma, note that for any $\alpha$ factoring through
some closed subscheme $Z$ with $\deg Z\le e-2g+1$, there is a unique
minimal subscheme that $\alpha$ factors through.
\end{rem}

We now come to the definition of the exponential sum.
\begin{notation}
\label{nota:expsum}Let 
\[
S(\alpha)\coloneqq\sum_{x\in P_{e,C}^{n+1}}\psi\left(\alpha(f(x))\right),
\]
where $\psi$ is a non-trivial additive character on $\F_{q}$ and
$f$ is the degree $d$ homogeneous equation of the hypersurface $X$. 
\end{notation}

Note that we can think of $f$ as a map from $P_{e,C}^{n+1}$ to $P_{de,C}$
via the cup product. 

There is a natural map $\Mor_{e}\left(C,X\right)\to\Pic^{e}\left(C\right)$
given by pulling back $O_{X}\left(1\right)$ to $C\times T$ along
$C\times T\to X\times T$ for a $T$-point of $\Mor_{e}\left(C,X\right)$.
Denote by $\Mor_{e}\left(C,X,L\right)$ the fiber above a degree $e$
line bundle $L$.

Note that the rational points of $\Mor_{e}\left(C,X,L\right)$ comprise
tuples of the following form:
\[
\left\{ \left(s_{1},\ldots,s_{n+1}\right)\in P_{e,C}^{n+1}:f\left(s_{1},\ldots,s_{n+1}\right)=0,s_{1},\ldots,s_{n+1}\text{ globally generate}\right\} /\G_{m}.
\]
If we pick a basis $v_{1},\ldots,v_{h^{0}\left(C,L\right)}$ of $P_{e,C}$,
each $s_{i}=\sum_{j=1}^{h^{0}\left(C,L\right)}a_{ij}v_{j}$ uniquely,
and the condition $f\left(s_{1},\ldots,s_{n+1}\right)=0$ is equivalent
to polynomial conditions by setting the coefficient of each degree
to zero. If we define 
\[
M_{e}\left(C,X,L\right)\coloneqq\text{these polynomial conditions cutting out a space in }\A^{(n+1)h^{0}\left(C,L\right)},
\]
then some open subset (since we need only an upper bound on the dimension,
it suffices to prove the result ignoring the condition of base-point-freeness)
of $M_{e}\left(C,X,L\right)$ quotiented by $\G_{m}$ is precisely
$\Mor_{e}\left(C,X,L\right)$. Consequently, it suffices to show 
\[
\lim_{q\to\infty}q^{-\widehat{\mu}}\#M_{e}\left(C,X,L\right)\left(\F_{q}\right)\le1,
\]
where the limit ranges over larger and larger powers of $p$ and $\widehat{\mu}=\mu-g+1=(n+1)(e+1-g)-(de+1)+g$,
since $\Pic^{e}\left(C\right)$ is a smooth proper variety of dimension
$g$ by Tag 0B9R of \cite{stacksproj} and \cite{deligne1974conjecture}
(i.e. it has $q^{g}+o\left(q^{g}\right)$ rational points). 

Let us write $CX$ for the affine cone of $X$, i.e. the hypersurface
in $\A^{n+1}$ cut out by $f$. 

Now, note 
\begin{align*}
\sum_{\alpha\in P_{de,C}^{\vee}}S(\alpha) & =\sum_{\alpha\in P_{de,C}^{\vee}}\sum_{x\in P_{e,C}^{n+1}}\psi\left(\alpha\left(f(x)\right)\right)\\
 & =\sum_{x\in P_{e,C}^{n+1}}\sum_{\alpha\in P_{de,C}^{\vee}}\psi\left(\alpha\left(f(x)\right)\right)\\
 & =\sum_{x\in P_{e,C}^{n+1}}\begin{cases}
0 & \text{if }f(x)\ne0\\
\#P_{de,C}^{\vee} & \text{else}
\end{cases}\\
 & =\#P_{de,C}\#M_{e}\left(C,X,L\right)\left(\F_{q}\right).
\end{align*}
By assumption that $e\ge2g-1$ (and applying Riemann--Roch), it follows
that 
\[
\#M_{e}\left(C,X,L\right)\left(\F_{q}\right)=q^{-de+g-1}\sum_{\alpha\in P_{de,C}^{\vee}}S(\alpha).
\]

\begin{lem}
Let $\deg Z\le e-2g+1$. Then, 
\[
\sum_{\alpha\sim Z}S(\alpha)=q^{(n+1)(e+1-g)}\frac{\#CX(Z)}{\#Z^{\dim CX}},
\]
where $\#Z\coloneqq q^{\deg Z}$.
\end{lem}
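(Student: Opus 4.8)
The plan is to evaluate the sum $\sum_{\alpha\sim Z}S(\alpha)$ by swapping the order of summation, first summing over $\alpha$. Fix $x\in P_{e,C}^{n+1}$. Since every $\alpha$ with $\alpha\sim Z$ is precisely a linear functional on $P_{de,C}$ that vanishes on the subspace $P_{de-Z,C}=H^0(C,L^{\otimes d}(-Z))$, such $\alpha$ factor through the quotient $P_{de,C}/P_{de-Z,C}$, which (since $\deg Z\le e-2g+1 < de-2g+2$, so the relevant $h^1$ vanishes by Riemann--Roch) is identified with $P_{de,Z}=H^0(Z,L^{\otimes d}|_Z)$, a space of dimension $\deg Z$. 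So $\{\alpha:\alpha\sim Z\}=(P_{de,Z})^\vee$. The inner sum $\sum_{\alpha\in (P_{de,Z})^\vee}\psi(\alpha(f(x)))$ is then a standard character-orthogonality sum: it equals $\#(P_{de,Z})^\vee=q^{\deg Z}=\#Z$ if the image of $f(x)$ in $P_{de,Z}$ is zero, and $0$ otherwise. The image of $f(x)$ in $P_{de,Z}$ is exactly $f(x|_Z)$, i.e.\ the value of $f$ on the restriction $x|_Z\in P_{e,Z}^{n+1}=H^0(Z,L|_Z)^{n+1}$.

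Hence
\[
\sum_{\alpha\sim Z}S(\alpha)=\#Z\cdot\#\{x\in P_{e,C}^{n+1}: f(x|_Z)=0\}.
\]
Next I would compute the cardinality on the right by splitting the condition according to the restriction map $P_{e,C}^{n+1}\to P_{e,Z}^{n+1}$. The condition $f(x|_Z)=0$ depends only on $x|_Z$, so
\[
\#\{x\in P_{e,C}^{n+1}: f(x|_Z)=0\}=\#(\ker)^{n+1}\cdot\#\{y\in P_{e,Z}^{n+1}: f(y)=0\},
\]
provided the restriction map $P_{e,C}\to P_{e,Z}$ is surjective with kernel $P_{e-Z,C}=H^0(C,L(-Z))$. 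Surjectivity holds because $H^1(C,L(-Z))=0$: indeed $\deg(L(-Z))=e-\deg Z\ge 2g-1>2g-2$ using the hypothesis $\deg Z\le e-2g+1$. By Riemann--Roch, $\#\ker = \#P_{e-Z,C}=q^{e-\deg Z-g+1}$, so $\#(\ker)^{n+1}=q^{(n+1)(e-\deg Z-g+1)}$. Moreover $\#\{y\in P_{e,Z}^{n+1}:f(y)=0\}$ is, by definition, the number of $\F_q$-points of the affine cone $CX$ base-changed along $\Spec\F_q\to Z$ viewed through the identification $P_{e,Z}=H^0(Z,L|_Z)$; this is what the notation $\#CX(Z)$ denotes. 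Combining,
\[
\sum_{\alpha\sim Z}S(\alpha)=q^{\deg Z}\cdot q^{(n+1)(e-\deg Z-g+1)}\cdot\#CX(Z).
\]

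Finally I would rewrite this in the stated form. We have $q^{\deg Z}=\#Z$ and $\dim CX=n$ (the affine cone over a hypersurface in $\P^n$ has dimension $n$), so $\#Z^{\dim CX}=q^{n\deg Z}$. Then
\[
q^{\deg Z}\cdot q^{(n+1)(e-\deg Z-g+1)}=q^{(n+1)(e+1-g)}\cdot q^{\deg Z-(n+1)\deg Z}=q^{(n+1)(e+1-g)}\cdot q^{-n\deg Z}=\frac{q^{(n+1)(e+1-g)}}{\#Z^{\dim CX}},
\]
which gives exactly $\sum_{\alpha\sim Z}S(\alpha)=q^{(n+1)(e+1-g)}\dfrac{\#CX(Z)}{\#Z^{\dim CX}}$, as claimed.

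The only genuinely delicate points are the two applications of Riemann--Roch, which both rely on the degree bound $\deg Z\le e-2g+1$: one to identify $\{\alpha:\alpha\sim Z\}$ with $(P_{de,Z})^\vee$ of the right dimension (needs $h^1(C,L^{\otimes d}(-Z))=0$, which is fine since $\deg(L^{\otimes d}(-Z))=de-\deg Z$ is large), and one for the surjectivity of $P_{e,C}\to P_{e,Z}$ with kernel of the expected dimension (needs $h^1(C,L(-Z))=0$, i.e.\ $e-\deg Z>2g-2$). Granting these, the argument is pure character orthogonality plus bookkeeping of exponents; I expect the main obstacle to be nothing more than carefully matching the definition of $\#CX(Z)$ (as a point count over the Artinian scheme $Z$ with respect to the trivialization coming from $L$) to the count $\#\{y\in P_{e,Z}^{n+1}:f(y)=0\}$ that falls out of the computation.
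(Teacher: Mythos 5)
Your proposal is correct and takes essentially the same approach as the paper: both identify $\{\alpha:\alpha\sim Z\}$ with $P_{de,Z}^{\vee}$ via the $H^{1}$-vanishing, invoke character orthogonality to reduce to counting $x$ with $f(x)|_{Z}=0$, and then use surjectivity of $P_{e,C}\to P_{e,Z}$ (again from Riemann--Roch) to extract the factor $q^{(n+1)(e-\deg Z-g+1)}\#CX(Z)$. The only cosmetic difference is that you swap the order of summation up front, whereas the paper first splits $S(\alpha)$ according to whether $f(x)|_{Z}$ vanishes and then sums over $\alpha$; the content is identical.
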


\begin{proof}
If $\alpha\sim Z$, then 
\begin{align*}
S(\alpha) & =\sum_{x\in P_{e,C}^{n+1}}\psi\left(\alpha\left(f(x)\right)\right)\\
 & =\sum_{f(x)|_{Z}=0}\psi(0)+\sum_{f(x)|_{Z}\ne0}\psi\left(\alpha\left(f(x)\right)\right)\\
 & =\#\left\{ x:f(x)|_{Z}=0\right\} +\sum_{f(x)|_{Z}\ne0}\psi\left(\alpha\left(f(x)\right)\right).
\end{align*}
Also, $P_{de,Z}^{\vee}$ injects into $P_{de,C}^{\vee}$ if $H^{1}\left(C,L^{\otimes d}\left(-Z\right)\right)=0$,
e.g. if $-\deg Z+de>2g-2,$ which is true. So 
\begin{align*}
\sum_{\alpha\sim Z}S(\alpha) & =\#P_{de,Z}\#\left\{ x:f(x)|_{Z}=0\right\} +\sum_{f(x)|_{Z}\ne0}\sum_{\alpha\in P_{de,Z}^{\vee}}\psi\left(\alpha\left(f(x)|_{Z}\right)\right)\\
 & =q^{\deg Z}\#\left\{ x:f(x)|_{Z}=0\right\} .
\end{align*}
We have the exact sequence 
\[
0\to H^{0}\left(C,L\left(-Z\right)\right)\to H^{0}\left(C,L\right)\to H^{0}\left(Z,L|_{Z}\right)\to H^{1}\left(C,L\left(-Z\right)\right),
\]
and since $-\deg Z+e>2g-2$, it follows that $H^{1}\left(C,L\left(-Z\right)\right)=0$,
and so using Riemann-Roch, we have 
\begin{align*}
\sum_{\alpha\sim Z}S(\alpha) & =q^{\deg Z+(n+1)\left(e-\deg Z-g+1\right)}\#CX(Z)\\
 & =q^{(n+1)(e+1-g)}\frac{\#CX(Z)}{\#Z^{\dim CX=n}},
\end{align*}
as desired.
\end{proof}
\begin{notation}
\label{nota:deg}Let us write $\deg\left(\alpha\right)$ to denote
the smallest degree of $Z$ for which $\alpha\sim Z$ and $\left|CX(Z)\right|$
to be $\#CX(Z)/\#Z^{\dim CX}$ (we will use this latter notation only
in this section).
\end{notation}

\begin{cor}
Suppose $Z=Z'+Z''$ as divisors with $Z'$ and $Z''$ disjoint. Then
\[
\sum_{\alpha\sim Z}S(\alpha)=q^{(n+1)(e+1-g)}\left|CX(Z)\right|=q^{-(n+1)(e+1-g)}\sum_{\alpha\sim Z'}S(\alpha)\sum_{\alpha\sim Z''}S(\alpha).
\]

In particular, if $Z=\Spec\left(\mathcal{O}_{x}/\mf m_{x}^{r}\right)$,
then 
\[
\sum_{\alpha\sim_{\min}Z}S(\alpha)=q^{(n+1)(e+1-g)}\left(\left|CX\left(\Spec\left(\mathcal{O}_{x}/\mf m_{x}^{r}\right)\right)\right|-\left|CX\left(\Spec\left(\mathcal{O}_{x}/\mf m_{x}^{r-1}\right)\right)\right|\right),
\]
and, more generally, by the principle of inclusion-exclusion, if $Z=\sum r_{i}x_{i}$,
then 
\begin{align*}
\sum_{\alpha\sim_{\min}Z}S(\alpha) & =q^{(n+1)(e+1-g)}\prod_{i}\left(\sum_{\alpha\sim_{\min}r_{i}x_{i}}S(\alpha)\right)\\
 & =q^{(n+1)(e+1-g)}\prod_{i}\left(\left|CX\left(r_{i}x_{i}\right)\right|-\left|CX\left(\left(r_{i}-1\right)x_{i}\right)\right|\right).
\end{align*}
\end{cor}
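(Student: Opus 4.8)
The plan is to prove the first displayed identity and then extract the two special cases as purely formal consequences. The first equality, $\sum_{\alpha\sim Z}S(\alpha)=q^{(n+1)(e+1-g)}\left|CX(Z)\right|$, is just the preceding lemma, so there is nothing to do there; the content is in the second equality. First I would observe that since $Z'$ and $Z''$ are disjoint as divisors, we have a canonical isomorphism $\mathcal{O}_Z\cong\mathcal{O}_{Z'}\times\mathcal{O}_{Z''}$, hence $L^{\otimes d}|_Z\cong L^{\otimes d}|_{Z'}\oplus L^{\otimes d}|_{Z''}$ and likewise for $L$ itself; restriction of global sections respects this splitting, so $P_{de,Z}\cong P_{de,Z'}\oplus P_{de,Z''}$ and $P_{e,Z}\cong P_{e,Z'}\oplus P_{e,Z''}$. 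Under the degree bound $\deg Z\le e-2g+1$ (which forces $\deg Z',\deg Z''\le e-2g+1$ as well, since both are at most $\deg Z$), all the relevant $H^1$'s vanish, so $P_{de,Z}^\vee$, $P_{de,Z'}^\vee$, $P_{de,Z''}^\vee$ all sit inside $P_{de,C}^\vee$ and the sum over $\alpha\sim Z$ is exactly the sum over $\alpha\in P_{de,Z}^\vee$.

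The key computational step is then that evaluation $CX(Z)=CX(Z')\times CX(Z'')$, because a map $\Spec\mathcal{O}_Z\to CX$ is the same as a pair of maps out of $\Spec\mathcal{O}_{Z'}$ and $\Spec\mathcal{O}_{Z''}$ by the product decomposition of $\mathcal{O}_Z$; hence $\#CX(Z)=\#CX(Z')\cdot\#CX(Z'')$, and since $\#Z^{\dim CX}=\#Z'^{\dim CX}\cdot\#Z''^{\dim CX}$ (as $\deg Z=\deg Z'+\deg Z''$ and $\dim CX=n$), the normalized quantity is multiplicative: $|CX(Z)|=|CX(Z')|\cdot|CX(Z'')|$. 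Combining with the lemma applied to $Z'$ and to $Z''$ separately,
\[
\sum_{\alpha\sim Z'}S(\alpha)\cdot\sum_{\alpha\sim Z''}S(\alpha)=q^{2(n+1)(e+1-g)}|CX(Z')|\,|CX(Z'')|=q^{2(n+1)(e+1-g)}|CX(Z)|,
\]
and multiplying the lemma's identity for $Z$ through by $q^{-(n+1)(e+1-g)}$ gives the asserted second equality.

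For the special cases: when $Z=\Spec(\mathcal{O}_x/\mf m_x^r)$, the subschemes through which $\alpha$ can factor are totally ordered ($\Spec(\mathcal{O}_x/\mf m_x^j)$ for $j\le r$), so $\sum_{\alpha\sim_{\min}Z}S(\alpha)=\sum_{\alpha\sim Z}S(\alpha)-\sum_{\alpha\sim \Spec(\mathcal{O}_x/\mf m_x^{r-1})}S(\alpha)$, and substituting the lemma into both terms yields the displayed formula. For a general $Z=\sum r_ix_i$, I would induct on the number of points in the support using the disjoint-union corollary just proved to factor the sum over $\alpha\sim_{\min}Z$ as a product over the points $x_i$ — here one needs that $\alpha\sim_{\min}Z$ decomposes compatibly with the product $\mathcal{O}_Z\cong\prod_i\mathcal{O}_{r_ix_i}$, i.e. that the minimal subscheme of a product is the product of the minimal subschemes in each factor, which is immediate from the factorwise nature of "factoring through" — and then apply the single-point case to each factor. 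The only place requiring care, and the mild obstacle, is bookkeeping the $H^1$-vanishing and the degree hypothesis $\deg Z\le e-2g+1$ uniformly so that every $P_{\bullet,\bullet}^\vee$ appearing genuinely injects into $P_{de,C}^\vee$ and every invocation of Riemann–Roch is legitimate; once that is in place the argument is entirely formal.
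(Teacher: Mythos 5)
Your treatment of the disjoint-union identity and the single-point case is sound and matches what the paper leaves implicit: disjointness of $Z'$ and $Z''$ gives $\mathcal{O}_Z\cong\mathcal{O}_{Z'}\times\mathcal{O}_{Z''}$, hence $CX(Z)\cong CX(Z')\times CX(Z'')$ and the multiplicativity $\left|CX(Z)\right|=\left|CX(Z')\right|\left|CX(Z'')\right|$; combining this with the preceding lemma gives the middle display, and for a single fat point the subscheme poset is a chain, so the min-version is a difference of two applications of the lemma. The paper states the corollary without proof, and these parts of your argument supply a correct one.

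The gap is in the general case. The ingredient you invoke — that $\alpha\sim_{\min}Z$ decomposes as $\alpha=\sum_i\alpha_i$ with $\alpha_i\sim_{\min}r_ix_i$ — is a true fact (it follows from the direct-sum splitting $P_{de,Z}^\vee\cong\bigoplus_iP_{de,r_ix_i}^\vee$ once the relevant $H^1$'s vanish), but it does not by itself give the factorization you claim: $S(\alpha)$ is not multiplicative under that decomposition, since $\psi(\alpha'(f(x)))\psi(\alpha''(f(x)))$ sits inside a single sum over $x\in P_{e,C}^{n+1}$, so $S(\alpha'+\alpha'')\ne S(\alpha')S(\alpha'')$ in general, and the bijection $\{\alpha\sim_{\min}Z\}\cong\prod_i\{\alpha_i\sim_{\min}r_ix_i\}$ does not turn the sum into a product. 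What actually closes the argument is M\"obius inversion on the poset of closed subschemes of $Z$: uniqueness of the minimal subscheme (from the preceding lemma) gives the partition $\{\alpha\sim Z\}=\bigsqcup_{W\subset Z}\{\alpha\sim_{\min}W\}$, hence
\[
\sum_{\alpha\sim_{\min}Z}S(\alpha)=\sum_{W\subset Z}\mu(W,Z)\sum_{\alpha\sim W}S(\alpha)=q^{(n+1)(e+1-g)}\sum_{W\subset Z}\mu(W,Z)\left|CX(W)\right|,
\]
and since the subscheme poset of $Z=\sum r_ix_i$ is the product of chains $\prod_i\{0,\dots,r_i\}$, its M\"obius function is a product of chain M\"obius functions, while $\left|CX(\cdot)\right|$ is multiplicative on disjoint supports by the first part of the corollary; the alternating sum therefore factors into $\prod_i\bigl(\left|CX(r_ix_i)\right|-\left|CX((r_i-1)x_i)\right|\bigr)$. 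That M\"obius-inversion step, which is exactly the "principle of inclusion-exclusion" the statement refers to, is what your proposal replaces with the (insufficient) decomposition-of-$\alpha$ claim.
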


\begin{lem}
Using the previous notation, 
\[
\left|CX\left(\Spec\left(\mathcal{O}_{x}/\mf m_{x}^{r}\right)\right)\right|=\frac{1-\#\kappa(x)^{\left\lceil r/d\right\rceil \left(d-n-1\right)}}{1-\#\kappa(x)^{d-n-1}}\left|\left(CX\backslash0\right)\left(\kappa(x)\right)\right|+\#\kappa(x)^{-\left(n+1\right)\left\lceil r/d\right\rceil +r}.
\]
 
\begin{proof}
Let us write $R=\mathcal{O}_{C,x}$ and $\pi$ a uniformizer. Also,
let $\kappa(x)=R/\pi$ and $CX_{i}$ be the points $\left(\pi^{i}x_{0},\ldots,\pi^{i}x_{n}\right)\in CX\left(R/\pi^{r}\right)$
such that $x_{j}\not\in(\pi)$ for some $j$. 

Then, 
\[
CX\left(R/\pi^{r}\right)=\bigsqcup_{i=0}^{r}CX_{i}.
\]

To compute $\#CX\left(R/\pi^{r}\right)$, we compute $\#CX_{i}$ over
the three ranges $0\le i\le\left\lceil r/d\right\rceil -1$, $\left\lceil r/d\right\rceil \le i\le r-1$,
and $i=r$. 

For $0\le i\le\left\lceil r/d\right\rceil -1$, we have that $\left(\pi^{i}x_{0},\ldots,\pi^{i}x_{n}\right)\in CX\left(R/\pi^{r}\right)$
is equivalent to $f\left(\pi^{i}x_{0},\ldots,\pi^{i}x_{n}\right)\in\left(\pi^{r}\right)$,
i.e. $\left(x_{0},\ldots,x_{n}\right)\in CX\left(R/\pi^{r-di}\right)$.
There are $\#\left(CX\backslash0\right)\left(R/\pi^{r-di}\right)$
ways to choose a non-zero element of $CX\left(R/\pi^{r-di}\right)$,
as well as $\#\kappa(x)$ choices for each of the coefficients of
$\pi^{r-di},\ldots,\pi^{r-i-1}$. 

For $\left\lceil r/d\right\rceil \le i\le r-1$, we have that $\left(\pi^{i}x_{0},\ldots,\pi^{i}x_{n}\right)\in CX\left(R/\pi^{r}\right)$
is equivalent to $f\left(\pi^{i}x_{0},\ldots,\pi^{i}x_{n}\right)\in\left(\pi^{r}\right)$,
which is automatic since $di\ge r$. Hence, there are $\#\kappa(x)$
choices for each for the coefficients of the powers $\pi$ through
$\pi^{r-i-1}$, and there are $\#\kappa(x)^{n+1}-1$ choices for the
constant term (not all $n+1$ terms can be zero).

Finally, for $i=r$, this is simply the zero vector.

Hence, we have 
\begin{align*}
 & \#CX\left(R/\pi^{r}\right)\\
 & =\sum_{i=0}^{\left\lceil r/d\right\rceil -1}\#\kappa(x)^{(n+1)(d-1)i}\#\left(CX\backslash0\right)\left(R/\pi^{r-di}\right)+\sum_{i=\left\lceil r/d\right\rceil }^{r-1}\#\kappa(x)^{(n+1)(r-i-1)}\left(\#\kappa(x)^{n+1}-1\right)+1\\
 & =\sum_{i=0}^{\left\lceil r/d\right\rceil -1}\#\kappa(x)^{(n+1)(d-1)i}\#\left(CX\backslash0\right)\left(R/\pi^{r-di}\right)+\#\kappa(x)^{(n+1)\left(r-\left\lceil r/d\right\rceil \right)}.
\end{align*}

Since $CX\backslash0$ is smooth (by assumption on $f$), Hensel's
lemma tells us that $\left|\left(CX\backslash0\right)\left(R/\pi^{r-di}\right)\right|=\left|\left(CX\backslash0\right)\left(\kappa(x)\right)\right|$.
Then, we have 
\begin{align*}
\left|CX\left(R/\pi^{r}\right)\right| & =\sum_{i=0}^{\left\lceil r/d\right\rceil -1}\#\kappa(x)^{(n+1)(d-1)i-din}\frac{\#\left(CX\backslash0\right)\left(R/\pi^{r-di}\right)}{\#\kappa(x)^{(r-di)n}}+\#\kappa(x)^{(n+1)\left(r-\left\lceil r/d\right\rceil \right)-rn}\\
 & =\sum_{i=0}^{\left\lceil r/d\right\rceil -1}\#\kappa(x)^{i(d-n-1)}\left|\left(CX\backslash0\right)\left(\kappa(x)\right)\right|+\#\kappa(x)^{-\left(n+1\right)\left\lceil r/d\right\rceil +r}\\
 & =\frac{1-\#\kappa(x)^{\left\lceil r/d\right\rceil \left(d-n-1\right)}}{1-\#\kappa(x)^{d-n-1}}\left|\left(CX\backslash0\right)\left(\kappa(x)\right)\right|+\#\kappa(x)^{-\left(n+1\right)\left\lceil r/d\right\rceil +r}.
\end{align*}
\end{proof}
\end{lem}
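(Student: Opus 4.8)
The plan is to compute $\#CX(R/\pi^{r})$, where $R=\mathcal{O}_{C,x}$ and $\pi$ is a uniformizer (write $\kappa=\kappa(x)=R/\pi$), by stratifying according to the largest power of $\pi$ dividing a point. Concretely, every element of $(R/\pi^{r})^{n+1}$ can be written as $\pi^{i}\cdot(x_{0},\dots,x_{n})$ where either $i=r$ and the vector is zero, or $0\le i\le r-1$ and at least one $x_{j}$ is a unit, in which case $(x_{0},\dots,x_{n})$ is only well-defined modulo $\pi^{r-i}$. This partitions $CX(R/\pi^{r})=\bigsqcup_{i=0}^{r}CX_{i}$, and I would count $\#CX_{i}$ in the three ranges $0\le i\le\lceil r/d\rceil-1$, $\lceil r/d\rceil\le i\le r-1$, and $i=r$ separately, using homogeneity of $f$: since $f(\pi^{i}x)=\pi^{di}f(x)$, the cone condition on $\pi^{i}x$ modulo $\pi^{r}$ is equivalent to the cone condition on $x$ modulo $\pi^{r-di}$.

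In the first range, $r-di\ge1$, so this says the reduction of $(x_{0},\dots,x_{n})$ modulo $\pi^{r-di}$ is a nonzero point of $CX(R/\pi^{r-di})$, and the coefficients of $\pi^{r-di},\dots,\pi^{r-i-1}$ — that is $(d-1)i$ powers, each with $n+1$ coordinates — are unconstrained; this gives $\#CX_{i}=\#\kappa^{(n+1)(d-1)i}\,\#(CX\setminus0)(R/\pi^{r-di})$. In the middle range $di\ge r$, so the equation is automatic: the constant term must be a nonzero vector ($\#\kappa^{n+1}-1$ choices) and the coefficients of $\pi^{1},\dots,\pi^{r-i-1}$ are free, giving $\#CX_{i}=(\#\kappa^{n+1}-1)\#\kappa^{(n+1)(r-i-1)}$, whose sum over the range telescopes (together with the $i=r$ term contributing $1$) to $\#\kappa^{(n+1)(r-\lceil r/d\rceil)}$. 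For the first-range contribution I would invoke smoothness of $CX\setminus0$ (which holds since $X$ is smooth, so $f$ is a smooth affine cone away from the origin) and Hensel's lemma to conclude $|(CX\setminus0)(R/\pi^{m})|=|(CX\setminus0)(\kappa)|$ for all $m\ge1$, i.e. the reduction map is fiberwise an affine space of relative dimension $\dim CX=n$.

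Finally I would normalize by dividing $\#CX(R/\pi^{r})$ by $\#Z^{\dim CX}=\#\kappa^{rn}$. A short exponent computation — $(n+1)(d-1)i+(r-di)n-rn=i(d-n-1)$ — turns the first range into the geometric sum $\bigl(\sum_{i=0}^{\lceil r/d\rceil-1}\#\kappa^{i(d-n-1)}\bigr)\,|(CX\setminus0)(\kappa)|=\frac{1-\#\kappa^{\lceil r/d\rceil(d-n-1)}}{1-\#\kappa^{d-n-1}}\,|(CX\setminus0)(\kappa)|$, while the middle/last part becomes $\#\kappa^{(n+1)(r-\lceil r/d\rceil)-rn}=\#\kappa^{r-(n+1)\lceil r/d\rceil}$, giving the claimed identity. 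There is no deep idea here: the argument is the standard local-density (Igusa zeta function) computation for a smooth hypersurface cone, and the only real work — hence the main obstacle — is the bookkeeping: pinning down the boundary between the three ranges exactly at $i=\lceil r/d\rceil$ (which is precisely where $r-di\ge1$ fails, so that the Hensel reduction is available exactly on the first range), counting the number of free intermediate coefficients in each stratum correctly, and tracking the powers of $\#\kappa$ through the normalization.
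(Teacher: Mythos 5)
Your proposal is correct and follows essentially the same approach as the paper's proof: the same stratification $CX(R/\pi^r)=\bigsqcup_{i=0}^r CX_i$ by $\pi$-divisibility, the same split into the ranges $0\le i\le\lceil r/d\rceil-1$, $\lceil r/d\rceil\le i\le r-1$, and $i=r$, the same use of homogeneity to reduce modulo $\pi^{r-di}$, Hensel's lemma on the smooth punctured cone, and the same telescoping and geometric-series bookkeeping after normalizing by $\#\kappa^{rn}$.
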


\begin{cor}
Let $r\ge2$. Then, 
\begin{align*}
 & \left|CX\left(\Spec\left(\mathcal{O}_{x}/\mf m_{x}^{r}\right)\right)\right|-\left|CX\left(\Spec\left(\mathcal{O}_{x}/\mf m_{x}^{r-1}\right)\right)\right|\\
 & =\begin{cases}
\left|\left(CX\backslash0\right)\left(\kappa(x)\right)\right|\#\kappa(x)^{\left(\left\lceil r/d\right\rceil -1\right)\left(d-n-1\right)}+\#\kappa(x)^{-\left(n+1\right)\left\lceil r/d\right\rceil +r} & \text{if }r\equiv1\bmod d,\\
\qquad-\#\kappa(x)^{-\left(n+1\right)\left\lceil (r-1)/d\right\rceil +r-1}\\
\#\kappa(x)^{-\left(n+1\right)\left\lceil r/d\right\rceil +r}-\#\kappa(x)^{-\left(n+1\right)\left\lceil (r-1)/d\right\rceil +r-1} & \text{else}.
\end{cases}
\end{align*}
\end{cor}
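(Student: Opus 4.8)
The plan is to reduce the statement to a direct subtraction using the closed-form expression for $\left|CX\left(\Spec\left(\mathcal{O}_{x}/\mf m_{x}^{r}\right)\right)\right|$ obtained in the previous lemma, and then to simplify the resulting difference of geometric sums by carefully tracking the ceiling functions $\left\lceil r/d\right\rceil$ and $\left\lceil (r-1)/d\right\rceil$, distinguishing the two cases according to whether passing from $r-1$ to $r$ increments the ceiling. First I would abbreviate $\kappa = \#\kappa(x)$ and write $A = \left|\left(CX\backslash 0\right)\left(\kappa(x)\right)\right|$, so that the lemma reads
\[
\left|CX\left(\Spec\left(\mathcal{O}_{x}/\mf m_{x}^{r}\right)\right)\right| = \frac{1-\kappa^{\left\lceil r/d\right\rceil\left(d-n-1\right)}}{1-\kappa^{d-n-1}}\, A + \kappa^{-\left(n+1\right)\left\lceil r/d\right\rceil + r}.
\]
Subtracting the analogous expression for $r-1$ gives two contributions: the ``$A$-term'' difference
\[
\frac{A}{1-\kappa^{d-n-1}}\left(\kappa^{\left\lceil (r-1)/d\right\rceil\left(d-n-1\right)} - \kappa^{\left\lceil r/d\right\rceil\left(d-n-1\right)}\right),
\]
and the ``error-term'' difference $\kappa^{-\left(n+1\right)\left\lceil r/d\right\rceil + r} - \kappa^{-\left(n+1\right)\left\lceil (r-1)/d\right\rceil + r-1}$, which already appears verbatim in both cases of the claimed formula and needs no further manipulation.

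The heart of the argument is the case analysis on the $A$-term. Since $r \ge 2$, we have $\left\lceil r/d\right\rceil - \left\lceil (r-1)/d\right\rceil \in \{0,1\}$, and it equals $1$ precisely when $r \equiv 1 \bmod d$ (this is the only residue class where $\lceil \cdot /d \rceil$ jumps at $r$). In the ``else'' case, $\left\lceil r/d\right\rceil = \left\lceil (r-1)/d\right\rceil$, so the two exponents $\left\lceil (r-1)/d\right\rceil\left(d-n-1\right)$ and $\left\lceil r/d\right\rceil\left(d-n-1\right)$ coincide and the entire $A$-term vanishes, leaving only the error-term difference—exactly the second case of the claim. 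In the case $r \equiv 1 \bmod d$, write $\left\lceil r/d\right\rceil = m$ so that $\left\lceil (r-1)/d\right\rceil = m-1$; then the $A$-term becomes
\[
\frac{A}{1-\kappa^{d-n-1}}\left(\kappa^{(m-1)(d-n-1)} - \kappa^{m(d-n-1)}\right) = \frac{A\,\kappa^{(m-1)(d-n-1)}\left(1 - \kappa^{d-n-1}\right)}{1-\kappa^{d-n-1}} = A\,\kappa^{(m-1)(d-n-1)},
\]
which is precisely $\left|\left(CX\backslash 0\right)\left(\kappa(x)\right)\right|\,\kappa^{\left(\left\lceil r/d\right\rceil - 1\right)\left(d-n-1\right)}$, matching the first case. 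Adding back the error-term difference in each case yields the stated formula.

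I do not expect any genuine obstacle here: the result is a bookkeeping consequence of the preceding lemma, and the only mild subtlety is confirming that $\left\lceil r/d\right\rceil$ increases by exactly $1$ over $r-1 \to r$ iff $r \equiv 1 \bmod d$ (for which the hypothesis $r \ge 2$ ensures $r-1 \ge 1$, so no edge case at $r=1$ intervenes), and that the factor $1 - \kappa^{d-n-1}$ in the denominator cancels cleanly—which it does, since $d - n - 1 \ne 0$ under the hypotheses of Theorem \ref{thm:main} (indeed $n \ge d+1 > d-1$), so no division by zero occurs. The write-up will simply record the substitution, the telescoping of the geometric-sum numerator, and the two-line case split.
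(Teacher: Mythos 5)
Your proposal is correct and is precisely the computation the paper's corollary is relying on; the paper omits the proof because it is exactly this direct subtraction of the two instances of the preceding lemma (applied at $r$ and at $r-1$, both valid since $r\ge 2$), followed by the case split on whether $\lceil r/d\rceil$ jumps. Your verification that the jump occurs iff $r\equiv 1\bmod d$, the telescoping cancellation of the $1-\kappa^{d-n-1}$ factor, and the observation that the denominator is non-zero under the hypotheses of the theorem, are all correct and complete.
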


By Deligne's resolution of the Weil conjectures in \cite{deligne1974conjecture},
we have 
\[
\left|\#X\left(\F_{q}\right)-\frac{q^{n}-1}{q-1}\right|\le Bq^{(n-1)/2}
\]
for some $B$ independent of $q$; if $d=1$, then $B=0$. It follows
that 
\[
\left|\#CX\left(\kappa\left(x\right)\right)-\#\kappa(x)^{n}\right|\le B\#\kappa(x)^{(n-1)/2}\left(\#\kappa(x)-1\right),
\]
so 
\[
\left|\left|CX\left(\kappa\left(x\right)\right)\right|-1\right|\le B'\#\kappa(x)^{(-n+1)/2}
\]
for some $B'$ independent of $q$; if $d=1$, then $B'=0$ as well.

In particular, 
\[
\left|\left|CX\left(\kappa\left(x\right)\right)\right|-1\right|q^{\deg x}\le q^{-1}
\]
under the assumptions of Theorem (\ref{thm:main}). 

For $r\ge2$, the previous corollary similarly tells us that 
\[
\left|\left|CX\left(rx\right)\right|-\left|CX\left(\left(r-1\right)x\right)\right|\right|q^{r\deg x}\le q^{-1}
\]
as well.

Then, we have the bound

\begin{align*}
\frac{\sum_{\alpha:\deg(\alpha)\le e-2g+1}S(\alpha)}{q^{(n+1)(e+1-g)}}\le & 1+\frac{\sum_{D=1}^{e-2g+1}\sum_{\deg(\alpha)=D}S(\alpha)}{q^{(n+1)(e+1-g)}}\\
= & 1+\sum_{D=1}^{e-2g+1}\sum_{\deg(\alpha)=D,\alpha=\sum_{i}r_{i}x_{i}}\prod_{i}\left|\left|CX\left(r_{i}x_{i}\right)\right|-\left|CX\left(\left(r_{i}-1\right)x_{i}\right)\right|\right|\\
\le & 1+\sum_{D=1}^{e-2g+1}\sum_{\deg(\alpha)=D,\alpha=\sum_{i}r_{i}x_{i}}\prod_{i}q^{-r_{i}\deg x_{i}-1}\\
\le & 1+\sum_{D=1}^{e-2g+1}\sum_{\deg(\alpha)=D,\alpha=\sum_{i}r_{i}x_{i}}q^{-D-1}.
\end{align*}
By the Lang--Weil bounds (Theorem (\ref{thm:LW})), we have $\#C\left(\F_{q^{r}}\right)=O\left(q^{r}\right)$. 

Hence, 
\begin{align*}
\frac{\sum_{\alpha:\deg(\alpha)\le e-2g+1}S(\alpha)}{q^{(n+1)(e+1-g)}}-1 & =O\left(\sum_{D=1}^{e-2g+1}q^{-1}\right).
\end{align*}

Then, as $q\to\infty$, this expression goes to 0, so we have 
\[
q^{-(n+1)(e+1-g)}\sum_{\alpha:\deg(\alpha)\le e-2g+1}S(\alpha)\to1.
\]

We now define the major and minor arcs of our sum, based on this computation.
\begin{notation}
\label{nota:majorminor}Say $\alpha\in P_{de,C}^{\vee}$ is a \textit{major
arc} if $\deg(\alpha)\le e-2g+1$. Otherwise, say $\alpha$ is a\textbf{
}\textit{minor arc}. 
\end{notation}

To complete the proof of our main theorem, it then suffices to check
the sum over minor arcs goes to 0, namely
\[
q^{-(n+1)(e+1-g)}\sum_{\alpha:\deg(\alpha)>e-2g+1}S(\alpha)\to0.
\]

\section{Weyl differencing}

Let us use the same notation as in the previous section, i.e. we have
fixed a line bundle $L$ on a smooth projective curve $C$ over $\F_{q}$
and defined the arcs as linear functionals on $P_{de,C}=H^{0}\left(C,L^{\otimes d}\right)$.
See Notations \ref{nota:Fix-a-line}, \ref{nota:expsum}, \ref{nota:deg},
and \ref{nota:majorminor}.

In this step, we establish a Weyl differencing bound that compares
$S(\alpha)$ to $N(\alpha)$, a standard quantity from analytic number
theory defined from multilinear forms associated to the equation of
the hypersurface. The precise statement of this comparison is given
by Proposition \ref{prop:weyldifference}. The strategy here is identical
to that of \cite{schmidt85,lee11}, although we have adapted their
lemmas to our more geometric setting. 
\begin{rem}
The intuition for passing from $S(\alpha)$ to $N(\alpha)$ is that
an exponential sum of the form $\sum_{x\in P_{e,C}}\psi\left(f(x)\right)$
is much easier to work with when $f$ is linear (compared to when
$f$ is a high degree polynomial). Weyl differencing allows one to
pass to the linear case beginning with the observation that 
\begin{align*}
|S(\alpha)|^{2} & =\sum_{x\in P_{e,C}}\sum_{y\in P_{e,C}}\psi\left(\alpha(f(x)-f(y)\right)\\
 & =\sum_{x\in P_{e,C}}\sum_{y\in P_{e,C}}\psi\left(\alpha\left(f(x+y)-f(y)\right)\right)\\
 & \le\sum_{x\in P_{e,C}}\left|\sum_{y\in P_{e,C}}\psi\left(\alpha\left(f(x+y)-f(y)\right)\right)\right|
\end{align*}
by the change of variables $x\mapsto x+y$. Notice that $f(x+y)-f(y)$
(the ``differencing'' operation) is a polynomial in $y$ of degree
one lower than that of $f$. By a repeated application of this differencing
operation and Cauchy--Schwarz, we are able to reduce from bounding
an exponential sum of a high degree polynomial (namely $f$) to an
exponential sum of a linear function. 

The cost of this running this argument is reflected in the fact that
we require exponentially many variables compared to the degree of
the polynomial in the statement of Theorem \ref{thm:main}.
\end{rem}

With $f$ the homogeneous equation cutting out our hypersurface $X$,
we define multilinear forms
\[
f_{d}\left(x^{(1)},\dots,x^{(d)}\right)\coloneqq\sum_{\ep_{1}=0}^{1}\cdots\sum_{\ep_{d}=0}^{1}(-1)^{\ep_{1}+\cdots+\ep_{d}}f\left(\ep_{1}x^{(1)}+\cdots+\ep_{d}x^{(d)}\right),
\]
where each $x^{(i)}$ is a tuple $\left(x_{1}^{(i)},\ldots,x_{n+1}^{(i)}\right)\in P_{e,C}^{n+1}$.
Write 
\[
f=\sum_{j_{1},\ldots,j_{d}=1}^{n+1}a_{j_{1},\ldots,j_{d}}x_{j_{1}}\cdots x_{j_{d}},
\]
where $a_{j_{1},\ldots,j_{d}}$ is symmetric in its entries (recall
$f$ is a degree $d$ polynomial). For instance, for $d=2$ and $n=1$,
this would look like 
\[
f=a_{1,1}x_{1}^{2}+a_{1,2}x_{1}x_{2}+a_{2,1}x_{2}x_{1}+a_{2,2}x_{2}^{2}=a_{1,1}x_{1}^{2}+2a_{1,2}x_{1}x_{2}+a_{2,2}x_{2}^{2}.
\]
 
\begin{notation}
\label{nota:psij}Let 
\[
\Psi_{j}\left(x^{(1)},\ldots,x^{(d-1)}\right)=d!\sum_{j_{1},\ldots,j_{d-1}=1}^{n+1}a_{j_{1},\ldots,j_{d-1},j}x_{j_{1}}^{(1)}\cdots x_{j_{d-1}}^{(d-1)},
\]
where multiplication should be interpreted via the cup product (the
presence of $d!$ in this expression is why we assume $p>d$). 
\end{notation}

Then, by plugging in the definition and cancelling terms, it is evident
that 

\[
f_{d}\left(x^{(1)},\dots,x^{(d-1)},x\right)=\Psi_{1}\left(x^{(1)},\ldots,x^{(d-1)}\right)x_{1}+\cdots+\Psi_{n+1}\left(x^{(1)},\ldots,x^{(d-1)}\right)x_{n+1},
\]
where $x=\left(x_{1},\ldots,x_{n+1}\right)\in P_{e,C}^{n+1}$. So,
we get
\begin{align*}
 & \sum_{x\in P_{e,C}^{n+1}}\psi\left(\alpha\left(f_{d}\left(x^{(1)},\dots,x^{(d-1)},x\right)\right)\right)\\
= & \sum_{x_{1}\in P_{e,C}}\cdots\sum_{x_{n+1}\in P_{e,C}}\psi\left(\alpha\left(\Psi_{1}\left(x^{(1)},\ldots,x^{(d-1)}\right)x_{1}+\cdots+\Psi_{n+1}\left(x^{(1)},\ldots,x^{(d-1)}\right)x_{n+1}\right)\right).
\end{align*}
Then, the same proof as that of Lemma 11.1 from \cite{schmidt85}
gives the following:
\begin{align*}
 & \left|S(\alpha)\right|^{2^{d-1}}\\
 & \le\left(\#P_{e,C}^{n+1}\right)^{2^{d-1}-d}\sum_{x^{(1)}\in P_{e,C}^{n+1}}\cdots\sum_{x^{(d-1)}\in P_{e,C}^{n+1}}\left|\sum_{x\in P_{e,C}^{n+1}}\psi\left(\alpha\left(f_{d}\left(x^{(1)},\dots,x^{(d-1)},x\right)\right)\right)\right|\\
 & =\left(\#P_{e,C}^{n+1}\right)^{2^{d-1}-d}\sum_{x^{(1)}\in P_{e,C}^{n+1}}\cdots\sum_{x^{(d-1)}\in P_{e,C}^{n+1}}\prod_{j=1}^{n+1}\left|\sum_{x\in P_{e,C}}\psi\left(\alpha\left(\Psi_{j}\left(x^{(1)},\ldots,x^{(d-1)}\right)x\right)\right)\right|.
\end{align*}
Note for any linear functional $\gamma$ on $P_{e,C}$, we have 
\[
\sum_{x\in P_{e,C}}\psi\left(\gamma(x)\right)=\begin{cases}
\#P_{e,C} & \gamma=0\\
0 & \text{otherwise}
\end{cases}.
\]

\begin{notation}
\label{nota:N(alpha)}Let 
\begin{align*}
N(\alpha) & \coloneqq\#\left\{ \left(x^{(1)},\ldots,x^{(d-1)}\right)\in\left(P_{e,C}^{n+1}\right)^{d-1}\colon\alpha\left(\Psi_{j}\left(x^{(1)},\ldots,x^{(d-1)}\right)x\right)=0\text{ for }x\in P_{e,C}\text{ and all }j\right\} .
\end{align*}
\end{notation}

As a consequence of the discussion above, we have 
\begin{prop}
\label{prop:weyldifference}
\[
\left|S(\alpha)\right|^{2^{d-1}}\le\left(\#P_{e,C}^{n+1}\right)^{2^{d-1}-d+1}N(\alpha).
\]
\end{prop}

\section{Why is the circle linear functionals on $P_{de,C}$?}

Let us keep the notation from the previous sections (see Notations
\ref{nota:Fix-a-line}, \ref{nota:expsum}, \ref{nota:deg}, and \ref{nota:majorminor}).
We now explain the connection between $P_{de,C}^{\vee}$ and $\mathbb{T}=\left\{ \alpha\in K_{\infty}:|\alpha|<1\right\} $,
the latter of which is used in Browning and Vishe's formation of the
exponential sum. 
\begin{notation}
\label{nota:choiceofD}Recall $K_{\infty}$ is the fraction field
of the completion of $\mathcal{O}_{C,\infty}$, where $\infty$ is
simply some fixed $\F_{q}$-point (by assumption on $q$, such a point
exists). Let $D$ be a divisor such that $\mathcal{O}(D)\cong L$.
For convenience, and since $D$ and $K_{C}$ are only defined up to
linear equivalence, we may assume $v_{\infty}\left(D\right)=v_{\infty}\left(K_{C}\right)=0$.
\end{notation}

In this section, we explain, among other things, how $\alpha$ can
be lifted to an element of $K_{\infty}$.

By Serre duality (see the corollary to Theorem 5 of \cite{tate68}),
there is a residue pairing 
\[
H^{0}\left(C,\mathcal{O}\left(dD\right)\right)\times H^{1}\left(C,\mathcal{O}\left(K_{C}-dD\right)\right)\to\F_{q},
\]
where we identify $H^{1}\left(C,\mathcal{O}\left(K_{C}-dD\right)\right)$
with the group of repartitions (see Section 4 of \cite{tate68})
\[
\frac{R}{R\left(K_{C}-dD\right)+K(C)},
\]
where $R$ is the adele ring of $K(C)$. Here, an element of $R\left(K_{C}-dD\right)$
is a tuple $\left(a_{p}\right)_{p\text{ closed point}}$ such that
$v_{p}\left(a_{p}\right)\ge-v_{p}\left(K_{C}-dD\right)$. 

More precisely, the pairing is given by 
\[
\wangle{\omega,r}=\sum_{p\text{ closed point}}\Res_{p}\left(\omega r_{p}\right).
\]
 Let $R_{\not\ne\infty}$ be the adeles without the contribution at
$\infty$.

Consider the map 
\[
K(C)\to\frac{R_{\not\ne\infty}}{R_{\not\ne\infty}\left(K_{C}-dD\right)}
\]
sending $f$ to $\left[\left(f_{p}\right)_{p\ne\infty}\right].$ 

By strong approximation (Lemma 9.1.9 of \cite{qingliubook}) this
is surjective. The kernel $K(C)\left(K_{C}-dD\right)|_{\ne\infty}$
is given by elements $f\in K(C)$ such that $v_{p}(f)\ge v_{p}\left(-K_{C}+dD\right)$
for all closed points $p\ne\infty$. 

Consider, now, the following abstract setup: 

Let $B_{i}\subset A_{i}$ and $C\subset A_{1}\times A_{2}$ be abelian
groups. Then, we have an exact sequence 
\begin{equation}
0\to\frac{A_{2}}{B_{2}+\im\left(\ker\left(C\to A_{1}/B_{1}\right)\to A_{2}\right)}\to\frac{A_{1}\times A_{2}}{B_{1}\times B_{2}+C}\to\frac{A_{1}}{B_{1}+\im(C\to A_{1})}\to0.\label{eq:exactsequence}
\end{equation}
Note that $\times$ is used to refer to an abstract product, whereas
$+$ used for addition with the obvious embedding. For example, $B_{1}\times B_{2}$
and $C$ are both embedded inside $A_{1}\times A_{2}$, and $B_{1}\times B_{2}+C$
is the sum taken inside $A_{1}\times A_{2}.$ 

To see this, first observe the simpler exact sequence 
\[
0\to\frac{\im(P\to M)\times N}{P}\to\frac{M\times N}{P}\to\frac{M}{\im(P\to M)}\to0
\]
for abelian groups $M,N$, and $P$ satisfying $P\subset M\times N$.
Replacing the term $\frac{\im(P\to M)\times N}{P}$ with $\frac{N}{\im\left(\ker(P\to M)\to N\right)}$
via the isomorphism 
\[
\frac{N}{\im\left(\ker(P\to M)\to N\right)}\to\frac{\im(P\to M)\times N}{P}
\]
 given by $[n]\mapsto[0\times n],$ we obtain the exact sequence 
\[
0\to\frac{N}{\im\left(\ker(P\to M)\to N\right)}\to\frac{M\times N}{P}\to\frac{M}{\im(P\to M)}\to0.
\]
Now, set $M=A_{1}/B_{1},N=A_{2}/B_{2},$ and $P=\im\left(C\to\left(A_{1}\times A_{2}\right)/\left(B_{1}\times B_{2}\right)\right)$. 

Setting $A_{1}=R_{\not\ne\infty},A_{2}=K_{\infty},B_{1}=R_{\not\ne\infty}\left(K_{C}-dD\right),B_{2}=K_{\infty}\left(K_{C}-dD\right),$
and $C=K(C)$ in the exact sequence (\ref{eq:exactsequence}) above,
we obtain

\begin{align*}
\frac{K_{\infty}}{K_{\infty}\left(K_{C}-dD\right)+K(C)\left(K_{C}-dD\right)|_{\ne\infty}} & \cong\frac{R}{R\left(K_{C}-dD\right)+K(C)},\\
 & \cong H^{1}\left(C,\mathcal{O}\left(K_{C}-dD\right)\right)\\
 & \cong H^{0}\left(C,\mathcal{O}\left(dD\right)\right)^{\vee},
\end{align*}
i.e. 
\begin{equation}
P_{de,C}^{\vee}\cong\frac{K_{\infty}}{K_{\infty}\left(K_{C}-dD\right)+K(C)\left(K_{C}-dD\right)|_{\ne\infty}}\label{eq:lifttoKinfty}
\end{equation}
which shows that we can lift any element of $P_{de,C}^{\vee}$ to
$K_{\infty}$. 

Of course, by the same reasoning, we have 
\begin{equation}
P_{e,C}^{\vee}\cong\frac{K_{\infty}}{K_{\infty}\left(K_{C}-D\right)+K(C)\left(K_{C}-D\right)|_{\ne\infty}}\label{eq:lifttoKinftyII}
\end{equation}
for any divisor $D$, from which it follows that any element $\beta\in P_{e,C}^{\vee}$
that vanishes on every element of $P_{e,C}$ can be written as $\beta'+\beta''$,
where $\beta'\in\mathcal{O}\left(K_{C}-D\right)|_{C-\infty}$ and
$\beta''\in\widehat{\mathcal{O}_{C,\infty}}$. Indeed, note that $K_{\infty}\left(K_{C}-D\right)$
is the same as $\widehat{\mathcal{O}_{C,\infty}}$ because of our
assumption on $D$ (see Notation \ref{nota:choiceofD}).
\begin{notation}
\label{nota:abuseofnotation}Note that $\mathcal{O}\left(K_{C}-D\right)|_{C-\infty}$
and $K(C)\left(K_{C}-D\right)|_{\ne\infty}$ are the same, although
by abuse of notation we will occasionally identify $\mathcal{O}\left(K_{C}-D\right)|_{C-\infty}$
with its global sections (in any case this is not an important distinction
because $C-\infty$ is affine).
\end{notation}

\begin{rem}
Alternatively, we can think about this from the perspective of the
Beauville--Laszlo theorem (see Remark 2 of \cite{BL95}). 

More generally, suppose $\beta\in K_{\infty}$ is the lift of an element
in $H^{1}\left(C,\mathcal{O}\left(K_{C}-D\right)\right)$ (which,
by Serre duality, is the same as a linear functional in $P_{e,C}^{\vee}$).
Using the Beauville--Laszlo theorem, consider the vector bundle $V$
on $C$ defined by $\mathcal{O}\left(K_{C}-D\right)|_{C-\infty}\oplus\mathcal{O}|_{C-\infty}$
on $C-\infty$, $\widehat{\mathcal{O}_{C,\infty}}^{2}$ on $\widehat{\mathcal{O}_{C,\infty}}$,
and the transition matrix 
\[
\begin{bmatrix}1 & -\beta\\
0 & 1
\end{bmatrix}.
\]
This fits naturally into a short exact sequence $0\to\mathcal{O}\left(K_{C}-D\right)\to V\to\mathcal{O}\to0$,
which defines an extension of $\mathcal{O}$ by $\mathcal{O}\left(K_{C}-D\right)$
and hence an element of $H^{1}\left(C,\mathcal{O}\left(K_{C}-D\right)\right)$. 
\begin{claim}
This extension corresponds to $\beta$. In particular, if the linear
functional is 0, then $\beta$ can be written as $\beta'+\beta''$,
where $\beta'\in\mathcal{O}\left(K_{C}-D\right)|_{C-\infty}$ and
$\beta''\in\widehat{\mathcal{O}_{C,\infty}}$. 
\end{claim}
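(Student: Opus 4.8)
The plan is to unwind the construction via the standard Čech description of $H^1$ of a line bundle on a curve, and then match it against the repartition/adele description that was used above. Concretely, the Beauville--Laszlo data glues the trivial-rank-one pieces over $C-\infty$ and over $\widehat{\mathcal{O}_{C,\infty}}$ along the matrix $\bigl(\begin{smallmatrix}1&-\beta\\0&1\end{smallmatrix}\bigr)$; the associated extension $0\to\mathcal{O}(K_C-D)\to V\to\mathcal{O}\to 0$ has a splitting over $C-\infty$ (namely the obvious inclusion of the second factor) and a splitting over $\widehat{\mathcal{O}_{C,\infty}}$ (the inclusion of the second factor in the other trivialization), and the difference of these two local splittings, measured as a section of $\mathcal{O}(K_C-D)$ over the "punctured disc'' $\Spec K_\infty$, is exactly $\beta$. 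That difference is precisely the Čech $1$-cocycle representing the extension class in $H^1(C,\mathcal{O}(K_C-D))$ computed with respect to the two-element cover $\{C-\infty,\ \widehat{\mathcal{O}_{C,\infty}}\}$ (in the fppf / Beauville--Laszlo sense).

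First I would set up the explicit trivializations: write $V|_{C-\infty}=\mathcal{O}(K_C-D)|_{C-\infty}\oplus\mathcal{O}|_{C-\infty}$ with basis $f_1,f_2$ and $V|_{\widehat{\O}_\infty}=\widehat{\O}_\infty^2$ with basis $g_1,g_2$, related on the overlap by $g_1=f_1$, $g_2=-\beta f_1+f_2$ (this is the content of the transition matrix). The sub-line-bundle $\mathcal{O}(K_C-D)\hookrightarrow V$ is spanned by $f_1=g_1$ on each chart, so it is globally well-defined; the quotient $\mathcal{O}$ is generated by the image of $f_2$ (equivalently of $g_2$, since they differ by an element of the sub). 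Second I would extract the extension class: lift the generator of $H^0(\mathcal{O})$ on each chart to $V$ — take $f_2$ over $C-\infty$ and $g_2$ over $\widehat{\O}_\infty$ — and note $f_2-g_2=\beta f_1$ on the overlap, so the Čech $1$-cocycle valued in $\mathcal{O}(K_C-D)$ is $\beta$. Third I would invoke the comparison, already established in the excerpt (via Serre duality, repartitions, and strong approximation), between $H^1(C,\mathcal{O}(K_C-D))$ described by this $\{C-\infty,\widehat{\O}_\infty\}$-cover and the quotient $K_\infty/(K_\infty(K_C-D)+K(C)(K_C-D)|_{\ne\infty})\cong P_{e,C}^\vee$: under that identification the cocycle class of $\beta\in K_\infty$ is exactly the image of $\beta$. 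This gives the first sentence of the claim. For the second sentence, if the linear functional associated to $\beta$ is zero, then $\beta$ lies in $K_\infty(K_C-D)+K(C)(K_C-D)|_{\ne\infty}$, i.e. $\beta=\beta''+\beta'$ with $\beta''\in\widehat{\mathcal{O}_{C,\infty}}$ (the condition $v_\infty\ge -v_\infty(K_C-D)=0$ uses the normalization $v_\infty(D)=v_\infty(K_C)=0$ fixed earlier) and $\beta'\in\mathcal{O}(K_C-D)|_{C-\infty}$; rearranging gives the stated decomposition.

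The one genuine subtlety — and the step I expect to be the main obstacle to write cleanly — is the bookkeeping that the two-chart cover $\{C-\infty,\widehat{\mathcal{O}_{C,\infty}}\}$ really does compute $H^1$ and that the Beauville--Laszlo gluing of vector bundles is compatible with the formation of $H^1$ as an $\Ext^1$, i.e. that "build $V$ by gluing, then read off the extension class'' agrees with "the extension class is the difference of local splittings.'' This is exactly the kind of statement that is transparent if one has already identified $H^1(C,\mathcal{O}(K_C-D))$ with the repartition quotient via the same decomposition $R=R_{\ne\infty}\times K_\infty$ used in the main text, so I would phrase it as: the short exact sequence $0\to\mathcal{O}(K_C-D)\to V\to\mathcal{O}\to 0$ is split over $C-\infty$ and over $\Spec\widehat{\mathcal{O}_{C,\infty}}$, the two splittings differ by the element $\beta\in\Hom(\mathcal{O},\mathcal{O}(K_C-D))\otimes K_\infty=K_\infty$, and this is by definition (under the identification above) the extension class. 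Everything else is a short formal manipulation; no serious calculation is required.
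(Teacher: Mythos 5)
Your proposal is correct and follows essentially the same route as the paper: compute the extension class as the mismatch between the two local splittings coming from the Beauville--Laszlo charts, observe that this mismatch is $\beta$ (up to an irrelevant sign coming from the choice of direction for the transition map), and then match this against the repartition description of $H^1\bigl(C,\mathcal{O}(K_C-D)\bigr)$ established earlier in the section. The one thing worth pointing out is that the subtlety you flag at the end---justifying that the two-chart BL ``cover'' really computes $H^1$ compatibly with $\Ext^1$---is exactly what the paper's proof is engineered to avoid asserting: rather than invoking a Čech/descent computation directly, the paper builds a morphism of short exact sequences from $0\to\mathcal{O}(K_C-D)\to V\to\mathcal{O}\to 0$ into $0\to\mathcal{O}(K_C-D)\to\mathcal{K}(C)\to S\to 0$ (glued from chart-wise morphisms on $C-\infty$ and $\widehat{\mathcal{O}_{C,\infty}}$), and reads off the image of $1\in H^0(\mathcal{O})$ through the resulting connecting maps; since $H^1$ of the bottom row is already identified with the repartition quotient, no separate compatibility argument is needed. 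Your Čech cocycle $f_2-g_2=\pm\beta f_1$ is the informal version of that chase. For the second sentence, the paper factors the transition matrix as a product of unipotent matrices over $\widehat{\mathcal{O}_{C,\infty}}$ and $\mathcal{O}(K_C-D)|_{C-\infty}$, which is equivalent to your direct reading of the decomposition off the repartition quotient.
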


\begin{proof}
The extension is defined on $C-\infty$ by the obvious short exact
sequence 
\[
0\to\mathcal{O}\left(K_{C}-D\right)|_{C-\infty}\to\mathcal{O}\left(K_{C}-D\right)|_{C-\infty}\oplus\mathcal{O}|_{C-\infty}\to\mathcal{O}|_{C-\infty}\to0
\]
and similarly on $\widehat{\mathcal{O}_{C,\infty}}$ by 
\[
0\to\widehat{\mathcal{O}_{C,\infty}}\to\widehat{\mathcal{O}_{C,\infty}}\oplus\widehat{\mathcal{O}_{C,\infty}}\to\widehat{\mathcal{O}_{C,\infty}}\to0
\]
(inclusion into the first coordinate, then projection onto the second
coordinate). 

Consider, now, the short exact sequence 
\[
0\to\mathcal{O}\left(K_{C}-D\right)\to\mathcal{K}(C)\to S\to0,
\]
where $\mathcal{K}(C)=\underline{K(C)}$ is the sheaf of meromorphic
functions on $C$, and $S$ is the cokernel. We define a morphism
between these short exact sequences on $C-\infty$ and $\widehat{\mathcal{O}_{C,\infty}}$
and glue them. 

First, on $C-\infty$, we have the following diagram:

\[\begin{tikzcd} 	0 & {\mathcal{O}\left(K_{C}-D\right)|_{C-\infty}} & {\mathcal{O}\left(K_{C}-D\right)|_{C-\infty}\oplus\mathcal{O}|_{C-\infty}} & {\mathcal{O}|_{C-\infty}} & 0 \\ 	0 & {\mathcal{O}\left(K_{C}-D\right)|_{C-\infty}} & {\mathcal{K}(C)} & {S|_{C-\infty}} & 0 	\arrow[from=1-1, to=1-2] 	\arrow[from=1-2, to=1-3] 	\arrow[from=1-3, to=1-4] 	\arrow[from=1-4, to=1-5] 	\arrow[from=2-1, to=2-2] 	\arrow[from=2-2, to=2-3] 	\arrow[from=2-3, to=2-4] 	\arrow[from=2-4, to=2-5] 	\arrow["{=}", from=1-2, to=2-2] 	\arrow["{(x,y)\mapsto x}", from=1-3, to=2-3] 	\arrow["{y\mapsto 0}", from=1-4, to=2-4] \end{tikzcd}\]

On $\widehat{\mathcal{O}_{C,\infty}},$ we similarly have: 

\[\begin{tikzcd} 	0 & {\widehat{\mathcal{O}_{C,\infty}}} & {\widehat{\mathcal{O}_{C,\infty}}\oplus\widehat{\mathcal{O}_{C,\infty}}} & {\widehat{\mathcal{O}_{C,\infty}}} & 0 \\ 	0 & {\widehat{\mathcal{O}_{C,\infty}}} & {K_\infty} & {S|_{\widehat{\mathcal{O}_{C,\infty}}}} & 0 	\arrow[from=1-1, to=1-2] 	\arrow[from=1-2, to=1-3] 	\arrow[from=1-3, to=1-4] 	\arrow[from=1-4, to=1-5] 	\arrow[from=2-1, to=2-2] 	\arrow[from=2-2, to=2-3] 	\arrow[from=2-3, to=2-4] 	\arrow[from=2-4, to=2-5] 	\arrow["{=}", from=1-2, to=2-2] 	\arrow["{(x,y)\mapsto x+\beta y}", from=1-3, to=2-3] 	\arrow["{y\mapsto [\beta y]}", from=1-4, to=2-4] \end{tikzcd}\]

Note that these are compatible on the overlap where $(x,y)\in\mathcal{O}\left(K_{C}-D\right)|_{C-\infty}\oplus\mathcal{O}|_{C-\infty}$
satisfy $\left(x-\beta y,y\right)\in\widehat{\mathcal{O}_{C,\infty}}^{2}$,
since in the second column, for instance, we have $(x,y)\mapsto x$
for $C-\infty$, as well as $\left(x-\beta y,y\right)\mapsto x-\beta y+\beta y=x$
for $\widehat{\mathcal{O}_{C,\infty}}.$ So, together, these glue
to give us a morphism of short exact sequences. The induced bottom
long exact sequence identifies $S/\left(S\left(K_{C}-D\right)+K(C)\right)\cong R/\left(R\left(K_{C}-D\right)+K(C)\right)$
with $H^{1}\left(C,\mathcal{O}\left(K_{C}-D\right)\right)$ (see Section
4 of \cite{tate68}), where $R$ is the adele ring of $K(C)$. 

Also, the image of 1 in $H^{0}\left(C,\mathcal{O}\right)\to H^{1}\left(C,\mathcal{O}\left(K_{C}-D\right)\right)$
from the induced top long exact sequence is precisely the cohomology
class of the extension defined by $\beta$. Hence, the image of 1
in the vertical map $H^{0}\left(C,\mathcal{O}\right)\to H^{0}\left(C,S\right)$
gives the corresponding lift of this cohomology class:

\[\begin{tikzcd} 	{} & {H^0(C,\mathcal{O})} & {H^{1}\left(C,\mathcal{O}\left(K_{C}-D\right)\right)} & {} \\ 	{} & {H^0(C,S)} & {H^{1}\left(C,\mathcal{O}\left(K_{C}-D\right)\right)} & {} 	\arrow[from=1-1, to=1-2] 	\arrow[from=1-2, to=1-3] 	\arrow[from=1-2, to=2-2] 	\arrow[from=1-3, to=1-4] 	\arrow["\cong"', from=1-3, to=2-3] 	\arrow[from=2-1, to=2-2] 	\arrow[from=2-2, to=2-3] 	\arrow[from=2-3, to=2-4] \end{tikzcd}\]

But, over $C-\infty$, 1 maps to 0. Over $\widehat{\mathcal{O}_{C,\infty}}$,
1 maps to $\beta\in K_{\infty}$. Then, using the identification $K_{\infty}/$$\left(K_{\infty}\left(K_{C}-D\right)+K(C)\left(K_{C}-D\right)|_{\ne\infty}\right)$
with $R/\left(R\left(K_{C}-D\right)+K(C)\right)$ as we did earlier,
we see that the class of $\beta$ in $H^{1}\left(C,\mathcal{O}\left(K_{C}-D\right)\right)$
is the same as the class of the extension.

For the second claim where we further assume the cohomology class
of the extension is trivial, this means the extension splits. In other
words, there are invertible matrices $\begin{bmatrix}1 & \beta''\\
0 & 1
\end{bmatrix}$ and $\begin{bmatrix}1 & \beta'\\
0 & 1
\end{bmatrix}$ with $\beta''\in\widehat{\mathcal{O}_{C,\infty}}$ (see the discussion
preceding Notation \ref{nota:abuseofnotation}) and $\beta'\in\mathcal{O}\left(K_{C}-D\right)|_{C-\infty}$
(see Notation \ref{nota:abuseofnotation}) such that the composition
$\begin{bmatrix}1 & \beta''\\
0 & 1
\end{bmatrix}\begin{bmatrix}1 & -\beta\\
0 & 1
\end{bmatrix}\begin{bmatrix}1 & \beta'\\
0 & 1
\end{bmatrix}$ is the identity. So the result follows.
\end{proof}
\end{rem}

\section{The geometry of numbers via vector bundles}

We continue to use the notation from the previous sections (see Notations
\ref{nota:Fix-a-line}, \ref{nota:expsum}, \ref{nota:deg}, \ref{nota:majorminor},
\ref{nota:psij}, \ref{nota:N(alpha)}, \ref{nota:choiceofD}, and
\ref{nota:abuseofnotation}). In this step, we relate $N(\alpha)$
to a slight modification denoted by $N_{s}(\alpha)$, which in \cite{BrowningVisheRatCurves}
is called Davenport's shrinking lemma. The precise statement in our
setting is given by Proposition \ref{prop:shrinking}.

To prove Davenport's shrinking lemma in their setting, Browning and
Vishe use facts about the geometry of numbers in the function field
setting from \cite{lee11}, which we reinterpret geometrically. Actually,
Browning and Sawin prove a better version of the shrinking lemma (Lemma
6.4 of \cite{cohomology_circle}) also using facts about successive
minima from \cite{lee11}, which we adapt to work in our geometric
setting. In particular, instead of working with lattices and successive
minima, we work with vector bundles over $C$ and slopes arising from
the Harder--Narasimhan filtration. 

\begin{notation}
Define 
\begin{align*}
N_{s,\ell}(\alpha) & \coloneqq\#\left\{ \left(x^{(1)},\ldots,x^{(\ell)}\right)\in\left(P_{e-s,C}^{n+1}\right)^{\ell},\left(x^{(\ell+1)},\ldots,x^{(d-1)}\right)\in\left(P_{e,C}^{n+1}\right)^{d-1-\ell}\colon\right.\\
 & \quad\quad\left.\alpha\left(\Psi_{j}\left(x^{(1)},\ldots,x^{(d-1)}\right)x\right)=0\text{ for }x\in P_{e+\ell s,C}\text{ and all }j\right\} .
\end{align*}
Here, $P_{e+\ell s,C}=H^{0}\left(C,L\left(\ell s\infty\right)\right)$
for any integer $\ell$ between $0$ and $d-1$, inclusive. In particular,
let 
\[
N_{s}(\alpha)=N_{s,d-1}(\alpha).
\]
\end{notation}

To obtain the inequality involving $N(\alpha)=N_{s,0}(\alpha)$ and
$N_{s}(\alpha),$ we successively compare $N_{s,\ell-1}(\alpha)$
and $N_{s,\ell}(\alpha)$, which we do through the global sections
of a specific vector bundle. 

First, we pick a lift of $\alpha$ to $K_{\infty}$ using the identification
(\ref{eq:lifttoKinfty}) from the previous section, which by abuse
of notation we will continue to refer to as $\alpha$. 

Since $C$ is smooth, $K_{\infty}$ is isomorphic to $\F_{q}(\!(t)\!)$,
where $t$ is the choice of a uniformizer at $\infty$.

Next, we describe how to construct the vector bundle $E$ on $C$
that encodes the information of the lattice used in Browning and Vishe's
work. Fix $s$ and $\ell$, as well as $x^{(1)},\ldots,x^{(\ell)}\in P_{e-s,C}^{n+1}$
and $x^{(\ell+2)},\ldots,x^{(d-1)}\in P_{e,C}^{n+1}$, which $E$
will depend on.

Recall from Notation \ref{nota:psij} that $\Psi_{j}$ is a multilinear
form taking in as input a $d$-tuple of $(n+1)$-tuples of the form
$\left(x_{1}^{(i)},\ldots,x_{n+1}^{(i)}\right)\in P_{e,C}^{n+1}$.
In particular, if we fix all but the $(\ell+1)$st entry, we obtain
a linear form $\Psi_{j}^{\ell}$ that takes in as input a single $(n+1)$-tuple
in $P_{e,C}^{n+1}$ and outputs an element of $P_{(d-1)e,C}$. Pulling
back $P_{e,C}$ and $P_{(d-1)e,C}$ to $\Spec K_{\infty}$, we see
that $\Psi_{j}^{\ell}$ can be viewed as a linear form with coefficients
in $K_{\infty}$. Denote by $\gamma$ the $(n+1)\times(n+1)$ matrix
with entries in $K_{\infty}$ whose $j$th row corresponds to $\Psi_{j}^{\ell}$.

To construct the vector bundle on $C$, by the Beauville--Laszlo
theorem (Remark 2 of \cite{BL95}), it suffices to specify a vector
bundle on $C-\infty$, a vector bundle on $\widehat{\mathcal{O}_{C,\infty}}$,
and an isomorphism of these bundles on $\Spec K_{\infty}=\left(C-\infty\right)\times_{C}\Spec\widehat{\mathcal{O}_{C,\infty}}$.
Take $\mathcal{O}\left(D\right)|_{C-\infty}^{n+1}\oplus\mathcal{O}\left(K_{C}-D-\ell s\infty\right)|_{C-\infty}^{n+1}$
on $C-\infty$, $\widehat{\mathcal{O}_{C,\infty}}^{2n+2}$ on $\widehat{\mathcal{O}_{C,\infty}}$,
and the invertible transition matrix 

\[
\Lambda=\begin{bmatrix}\Id_{n+1} & 0\\
t^{-\ell s}\alpha\gamma & \Id_{n+1}
\end{bmatrix}\in K_{\infty}^{(2n+2)\times(2n+2)}.
\]

Let us call the corresponding vector bundle $E$. 
\begin{lem}
\label{lem:globalsectionswithKsl}Fix $x^{(1)},\ldots,x^{(\ell)}\in P_{e-s,C}^{n+1}$
and $x^{(\ell+2)},\ldots,x^{(d-1)}\in P_{e,C}^{n+1}$. Let $K_{s,\ell}(\alpha)$
be defined the same way as $N_{s,\ell}(\alpha)$, except with all
but the $(\ell+1)$st entry fixed, i.e. is the number of $x^{(\ell+1)}\in P_{e,C}^{n+1}$
such that $\alpha\left(\Psi_{j}\left(x^{(1)},\ldots,x^{(d-1)}\right)x\right)$
vanishes for all $x\in P_{e+\ell s,C}$ and all $j$. 

Then, $\#H^{0}\left(C,E\right)=\#K_{s,\ell}(\alpha).$ 
\begin{proof}
Observe that each global section of $E$ corresponds to pairs of sections
of $\mathcal{O}\left(D\right)|_{C-\infty}^{n+1}\oplus\mathcal{O}\left(K_{C}-D-\ell s\infty\right)|_{C-\infty}^{n+1}$
on $C-\infty$ and sections of $\widehat{\mathcal{O}_{C,\infty}}^{2n+2}$
on $\widehat{\mathcal{O}_{C,\infty}}$ such that after tensoring up
to $K_{\infty}$, they are equivalent under the transition matrix
$\Lambda$, i.e. $\begin{bmatrix}x\\
y
\end{bmatrix}\in\mathcal{O}\left(D\right)|_{C-\infty}^{n+1}\oplus\mathcal{O}\left(K_{C}-D-\ell s\infty\right)|_{C-\infty}^{n+1}$ such that $\begin{bmatrix}x\\
t^{-\ell s}\alpha\gamma x+t^{-\ell s}y
\end{bmatrix}\in\widehat{\mathcal{O}_{C,\infty}}^{2n+2}$, which is the same as saying $v_{\infty}(x)\ge0$ and $v_{\infty}\left(\alpha\gamma x+y\right)\ge\ell s$. 

It remains to verify that the number of such pairs $(x,y)$ is the
same as $\#K_{s,\ell}(\alpha)$. Consider the map $H^{0}\left(C,E\right)\to K_{s,\ell}(\alpha)$
sending pairs $(x,y)$ to $x$. 

Let's check this is well-defined. Given such a pair, it is clear that
$x\in P_{e,C}^{n+1}$. The existence of $y$ such that $v_{\infty}\left(\alpha\gamma x+y\right)\ge\ell s$
implies that $\alpha\gamma x$ can be written in the form $\beta'+\beta''$,
where $\beta'\in\mathcal{O}\left(K_{C}-D-\ell s\infty\right)|_{C-\infty}^{n+1}$
and $\beta''\in K_{\infty}\left(K_{C}-D-\ell s\infty\right)$ by taking
$\beta'=-y$ and $\beta''=\alpha\gamma x+y$. Consider the image of
$\alpha\gamma x\in K_{\infty}^{n+1}$ in $\left(K_{\infty}/K_{\infty}\left(K_{C}-D-\ell s\infty\right)+K(C)\left(K_{C}-D-\ell s\infty\right)|_{\ne\infty}\right)^{n+1}$.
By the same argument as the identification verifying (\ref{eq:lifttoKinfty})
(but with $D$ replaced with $D+\ell s\infty$), we have $K_{\infty}/K_{\infty}\left(K_{C}-D-\ell s\infty\right)+K(C)\left(K_{C}-D-\ell s\infty\right)|_{\ne\infty}\cong P_{e+\ell s,C}^{\vee}$,
so we may think of $\alpha\gamma x$ as an element of $\left(P_{e+\ell s,C}^{\vee}\right)^{n+1}$
(concretely, its action on an element of $P_{e+\ell s,C}^{n+1}$ is
given by the residue pairing---see Section 4 of \cite{tate68}).
Since $\alpha\gamma x$ can be expressed as $\beta'+\beta''$ above,
this implies that it is 0 when viewed as an element of $P_{e+\ell s,C}^{\vee}$. 

This also shows the map is surjective by reversing the steps: If $\alpha\gamma x$
is 0 as an element of $P_{e+\ell s,C}^{\vee}$, we can find $\beta'\in\mathcal{O}\left(K_{C}-D-\ell s\infty\right)|_{C-\infty}^{n+1}$
and $\beta''\in K_{\infty}\left(K_{C}-D-\ell s\infty\right)$ such
that $\alpha\gamma x=\beta'+\beta''$, so the pair $(x,-\beta')$
is in the preimage. 

Finally, we check that the map is injective, i.e. for each choice
of $x$, there is only one value of $y$ that satisfies $v_{\infty}\left(\alpha\gamma x+y\right)\ge\ell s$.
Indeed, if $y'$ also satisfies $v_{\infty}\left(\alpha\gamma x+y'\right)\ge\ell s$,
then $v_{\infty}\left(y-y'\right)\ge\ell s$ and $v_{p}\left(y-y'\right)\ge-v_{p}\left(K_{C}-D-\ell s\infty\right)$
for all $p\ne\infty$, so $y-y'\in H^{0}\left(C,\mathcal{O}\left(K_{C}-D\right)\right)=0$
because $\deg D=e>2g-2=\deg K_{C}$. 
\end{proof}
\end{lem}

An identical argument shows the following.
\begin{lem}
\label{lem:globalsectionswithKslII}Fix $x^{(1)},\ldots,x^{(\ell)}\in P_{e-s,C}^{n+1}$
and $x^{(\ell+2)},\ldots,x^{(d-1)}\in P_{e,C}^{n+1}$. Let $K'_{s,\ell}(\alpha)$
be defined the same way as $N_{s,\ell+1}(\alpha)$, except with all
but the $(\ell+1)$st entry fixed, i.e. is the number of $x^{(\ell+1)}\in P_{e-s,C}^{n+1}$
such that $\alpha\left(\Psi_{j}\left(x^{(1)},\ldots,x^{(d-1)}\right)x\right)$
vanishes for all $x\in P_{e+(\ell+1)s,C}$ and all $j$. 

Then, $\#H^{0}\left(C,E(-s\infty)\right)=\#K'_{s,\ell}(\alpha).$ 
\end{lem}

The following lemma is crucial for controlling the global sections
of $E$.
\begin{lem}
\label{lem:.relationshipwithdual}$E^{\vee}\otimes K_{C}\left(-\ell s\infty\right)\cong E$.
\begin{proof}
The vector bundle $E$ is constructed so that $E|_{C-\infty}=\mathcal{O}\left(D\right)|_{C-\infty}^{n+1}\oplus\mathcal{O}\left(K_{C}-D-\ell s\infty\right)|_{C-\infty}^{n+1}$
and $E|_{\widehat{\mathcal{O}_{C,\infty}}}=\widehat{\mathcal{O}_{C,\infty}}^{2n+2}$
with transition matrix 
\[
\Lambda=\begin{bmatrix}\Id_{n+1} & 0\\
t^{-\ell s}\alpha\gamma & \Id_{n+1}
\end{bmatrix}\in K_{\infty}^{(2n+2)\times(2n+2)}.
\]
Let us express this concisely as the data $\left(\mathcal{O}\left(D\right)|_{C-\infty}^{n+1}\oplus\mathcal{O}\left(K_{C}-D-\ell s\infty\right)|_{C-\infty}^{n+1},\widehat{\mathcal{O}_{C,\infty}}^{2n+2},\Lambda\right)$. 

By taking duals, this implies 
\begin{align*}
E^{\vee} & \cong\left(\mathcal{O}\left(-D\right)|_{C-\infty}^{n+1}\oplus\mathcal{O}\left(-K_{C}+D+\ell s\infty\right)|_{C-\infty}^{n+1},\widehat{\mathcal{O}_{C,\infty}}^{2n+2},\Lambda^{-T}\right)
\end{align*}
so that 
\begin{align*}
E^{\vee}\otimes K_{C}\left(-\ell s\infty\right) & \cong\left(\mathcal{O}\left(K_{C}-D-\ell s\infty\right)|_{C-\infty}^{n+1}\oplus\mathcal{O}\left(-D\right)|_{C-\infty}^{n+1},t^{\ell s\infty}\widehat{\mathcal{O}_{C,\infty}}^{2n+2},\Lambda^{-T}\right)\\
 & \cong\left(\mathcal{O}\left(K_{C}-D-\ell s\infty\right)|_{C-\infty}^{n+1}\oplus\mathcal{O}\left(-D\right)|_{C-\infty}^{n+1},\widehat{\mathcal{O}_{C,\infty}}^{2n+2},t^{-\ell s}\Lambda^{-T}\right),
\end{align*}
where the second isomorphism follows from the fact that $v_{\infty}\left(x\right)\ge\ell s$
is equivalent to $v_{\infty}\left(t^{-\ell s}x\right)\ge0.$

Observe that 
\[
t^{-\ell s}\Lambda^{-T}=\begin{bmatrix}t^{-\ell s}\Id_{n+1} & -t^{-\ell s}\alpha\gamma\\
0 & \Id_{n+1}
\end{bmatrix}=\begin{bmatrix}0 & I_{n+1}\\
-I_{n+1} & 0
\end{bmatrix}\Lambda\begin{bmatrix}0 & I_{n+1}\\
-I_{n+1} & 0
\end{bmatrix}^{-1},
\]
so we have 
\begin{align*}
 & E^{\vee}\otimes K_{C}\left(-\ell s\infty\right)\\
 & \cong\left(\mathcal{O}\left(K_{C}-D-\ell s\infty\right)|_{C-\infty}^{n+1}\oplus\mathcal{O}\left(-D\right)|_{C-\infty}^{n+1},\widehat{\mathcal{O}_{C,\infty}}^{2n+2},\begin{bmatrix}0 & I_{n+1}\\
-I_{n+1} & 0
\end{bmatrix}\Lambda\begin{bmatrix}0 & I_{n+1}\\
-I_{n+1} & 0
\end{bmatrix}^{-1}\right)\\
 & \cong\left(\mathcal{O}\left(-D\right)|_{C-\infty}^{n+1}\oplus\mathcal{O}\left(K_{C}-D-\ell s\infty\right)|_{C-\infty}^{n+1},\widehat{\mathcal{O}_{C,\infty}}^{2n+2},\Lambda\right)\\
 & \cong E,
\end{align*}
as desired.
\end{proof}
\end{lem}

Recall that if $E$ is a non-zero vector bundle, its \textit{slope}
is defined as 
\[
\mu\left(E\right)\coloneqq\frac{\deg\left(E\right)}{\rk\left(E\right)}.
\]
Moreover, $E$ is \textit{semi-stable} of slope $\lambda$ if $\mu\left(E\right)=\lambda$
and for every non-zero sub-bundle $E'\subset E$, we have $\mu\left(E'\right)\le\lambda$
(by convention, the zero vector bundle is semi-stable of any slope).
Equivalently, for any surjection of vector bundles $E\twoheadrightarrow E'$,
we have $\mu\left(E'\right)\ge\lambda$. 

It is a famous result (Proposition 1.3.9 of \cite{HNfiltration})
that every vector bundle $E$ has a unique \textit{Harder--Narasimhan
filtration}, i.e. a filtration by vector bundles $0=E_{0}\subset E_{1}\subset\cdots\subset E_{r}=E$
such that the following conditions are satisfied:
\begin{enumerate}
\item Each quotient $E_{i}/E_{i-1}$ is a semi-stable vector bundle of some
slope $\lambda_{i}$.
\item $\lambda_{1}>\cdots>\lambda_{r}$. 
\end{enumerate}
\begin{lem}
\label{lem:dualHN}Suppose $E$ is a vector bundle with Harder--Narasimhan
filtration 
\[
0=E_{0}\subset E_{1}\subset\cdots\subset E_{r}=E
\]
Then, the Harder--Narasimhan filtration of $E^{\vee}$ is given by
\[
0=\left(E_{r}/E_{r}\right)^{\vee}\subset\left(E_{r}/E_{r-1}\right)^{\vee}\subset\cdots\subset\left(E_{r}/E_{0}\right)^{\vee}=E^{\vee}.
\]
\end{lem}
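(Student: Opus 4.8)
The plan is to verify that the proposed filtration of $E^{\vee}$ satisfies the two defining properties of a Harder--Narasimhan filtration, and then invoke uniqueness. Dualizing a short exact sequence of vector bundles is exact (since everything is locally free), so from $0 \subset E_1 \subset \cdots \subset E_r = E$ we obtain surjections $E_r/E_{i-1} \twoheadrightarrow E_r/E_i$ with kernel $E_i/E_{i-1}$, and dualizing gives short exact sequences
\[
0 \to (E_r/E_i)^{\vee} \to (E_r/E_{i-1})^{\vee} \to (E_i/E_{i-1})^{\vee} \to 0.
\]
Thus the proposed chain is genuinely an increasing filtration of $E^{\vee} = (E_r/E_0)^{\vee}$ by sub-bundles, with successive quotients $(E_i/E_{i-1})^{\vee}$.

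Next I would check the two HN conditions. For condition (i): the dual of a semi-stable bundle of slope $\lambda$ is semi-stable of slope $-\lambda$. This follows because $\deg$ and $\rk$ are additive in short exact sequences and $\deg(F^{\vee}) = -\deg(F)$, so $\mu(F^{\vee}) = -\mu(F)$; and a sub-bundle $G \subset F^{\vee}$ corresponds to a quotient $F \twoheadrightarrow G^{\vee}$, with $\mu(G) = -\mu(G^{\vee}) \le -\lambda$ by the quotient characterization of semi-stability recalled in the excerpt. Hence $(E_i/E_{i-1})^{\vee}$ is semi-stable of slope $-\lambda_i$. Relabeling so that the $j$-th step of the new filtration (for $j = 1, \dots, r$) is $(E_r/E_{r-j})^{\vee}$, its $j$-th graded piece is $(E_{r-j+1}/E_{r-j})^{\vee}$, which is semi-stable of slope $-\lambda_{r-j+1}$. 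For condition (ii): since $\lambda_1 > \cdots > \lambda_r$, we get $-\lambda_r > -\lambda_{r-1} > \cdots > -\lambda_1$, i.e. the new slopes $-\lambda_{r-j+1}$ for $j = 1, \dots, r$ are strictly decreasing. So both axioms hold.

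Finally, by the uniqueness of the Harder--Narasimhan filtration (already invoked in the excerpt), the filtration we have exhibited must be \emph{the} HN filtration of $E^{\vee}$, which is exactly the claimed statement.

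I do not expect a serious obstacle here; the only point requiring a little care is bookkeeping the index reversal so that the slopes come out in strictly decreasing order, and making sure to use the ``quotient'' form of semi-stability (rather than the ``sub-bundle'' form) when dualizing, since sub-bundles of $F^{\vee}$ dualize to quotients of $F$. One should also note that the zero bundle convention makes the endpoints $(E_r/E_r)^{\vee} = 0$ and $(E_r/E_0)^{\vee} = E^{\vee}$ work out correctly.
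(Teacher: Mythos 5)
Your proof is correct and follows essentially the same route as the paper's: dualize the short exact sequences $0\to E_i/E_{i-1}\to E_r/E_{i-1}\to E_r/E_i\to 0$, show each graded piece $(E_i/E_{i-1})^{\vee}$ is semi-stable of slope $-\lambda_i$ by passing between sub-bundles of the dual and quotients of the original, observe the slopes come out strictly decreasing after re-indexing, and invoke uniqueness of the Harder--Narasimhan filtration.
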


\begin{proof}
Consider the short exact sequence 
\[
0\to E_{i+1}/E_{i}\to E_{r}/E_{i}\to E_{r}/E_{i+1}\to0.
\]
Dualizing this gives 
\[
0\to\left(E_{r}/E_{i+1}\right)^{\vee}\to\left(E_{r}/E_{i}\right)^{\vee}\to\left(E_{i+1}/E_{i}\right)^{\vee}\to0.
\]
Note that $\left(E_{i+1}/E_{i}\right)^{\vee}$ has the same rank as
$E_{i+1}/E_{i}$ and has degree negative that of $E_{i+1}/E_{i}$,
which means $\mu\left(\left(E_{i+1}/E_{i}\right)^{\vee}\right)=-\mu\left(E_{i+1}/E_{i}\right)$.
Moreover, it is also semi-stable since if $F\subset\left(E_{i}/E_{i+1}\right)^{\vee}$
is a sub-bundle, then $E_{i}/E_{i+1}\to F^{\vee}$ is a surjection
of vector bundles and semi-stability gives $-\mu(F)=\mu\left(F^{\vee}\right)\ge\mu\left(E_{i}/E_{i+1}\right),$
so $\mu\left(\left(E_{i}/E_{i+1}\right)^{\vee}\right)=-\mu\left(E_{i}/E_{i+1}\right)\ge\mu(F)$.
So by uniqueness of the Harder--Narasimhan filtration, this is indeed
the Harder--Narasimhan filtration of $E^{\vee}$.
\end{proof}
\begin{lem}
\label{lem:increaseslope}Suppose $E$ is a vector bundle on $C$
of slope $\lambda$. Then $E\left(r\infty\right)$ has slope $\lambda+r$.
Moreover, if $E$ is semi-stable, then so is $E\left(r\infty\right)$. 
\begin{proof}
The first part is clear, since rank doesn't change and the determinant
is given by $\left(\det E\right)^{\otimes1}\otimes\left(\det\mathcal{O}\left(r\infty\right)\right)^{\otimes\rk E}=\det E\otimes\mathcal{O}(r\rk E(\infty))$.
For the second, we're just tensoring by an invertible sheaf.
\end{proof}
\end{lem}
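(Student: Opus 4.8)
The plan is to deduce both assertions from the standard behavior of rank, degree, and slope under tensoring by an invertible sheaf. First I would note $E(r\infty) = E \otimes \mathcal{O}(r\infty)$ with $\mathcal{O}(r\infty)$ a line bundle, and that, since $\infty$ is an $\F_q$-rational point, its residue field is $\F_q$ and hence $\deg \mathcal{O}(r\infty) = r$. Tensoring by a line bundle preserves rank, so $\rk E(r\infty) = \rk E$; and from $\det\bigl(E \otimes \mathcal{O}(r\infty)\bigr) \cong (\det E) \otimes \mathcal{O}(r\infty)^{\otimes \rk E} = \det E \otimes \mathcal{O}\bigl(r(\rk E)\infty\bigr)$ we get $\deg E(r\infty) = \deg E + r\,\rk E$. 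Dividing by the rank gives $\mu(E(r\infty)) = \mu(E) + r = \lambda + r$, the first claim. (Alternatively one could compute the degree shift from the torsion quotient $E(r\infty)/E$ supported at $\infty$, but the determinant identity is cleaner.)

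For the second claim, I would use that $- \otimes \mathcal{O}(-r\infty)$ is an exact auto-equivalence of the category of coherent sheaves on $C$, hence carries sub-bundles of $E(r\infty)$ bijectively to sub-bundles of $E$: precisely, $E' \subset E(r\infty)$ is a (saturated) sub-bundle if and only if $E'(-r\infty) \subset E$ is. Applying the slope computation above with $E'$ in place of $E$ yields $\mu(E') = \mu(E'(-r\infty)) + r$. If $E$ is semistable of slope $\lambda$, then $\mu(E'(-r\infty)) \le \lambda$ for every such $E'$, so $\mu(E') \le \lambda + r = \mu(E(r\infty))$, which is exactly the semistability of $E(r\infty)$.

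I do not expect any genuine obstacle: the lemma simply records that twisting by a fixed line bundle rigidly shifts every slope by the same constant $r$ and is compatible with the sub-bundle relation, so it cannot disturb the slope inequalities defining semistability. The only points requiring a moment's attention are the normalization $\deg \mathcal{O}(\infty) = 1$ (valid because $\infty$ is an $\F_q$-point) and phrasing the sub-bundle correspondence in terms of saturated subsheaves so that the test for semistability transfers cleanly; both are routine.
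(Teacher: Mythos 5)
Your proof is correct and takes essentially the same route as the paper: the slope shift via the determinant identity, and semistability via the observation that twisting by an invertible sheaf is an exact auto-equivalence that shifts every sub-bundle's slope by the same constant. The paper states both steps more tersely; you have simply filled in the routine details.
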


Let us use the notation of Lemma \ref{lem:dualHN} and return to the
problem. Since $E^{\vee}\otimes K_{C}\left(-\ell s\infty\right)\cong E$
by Lemma \ref{lem:.relationshipwithdual}, the corresponding quotients
of their Harder--Narasimhan filtrations must be isomorphic in a compatible
way, i.e. 
\begin{equation}
E_{i}\cong\left(E_{r}/E_{r-i}\right)^{\vee}\otimes K_{C}\left(-\ell s\infty\right)\label{eq:relationshippieceofdual}
\end{equation}
are isomorphic and the isomorphisms are compatible with the inclusions
between them. In particular, the slopes are also the same. Let $\lambda_{i}\coloneqq\mu\left(E_{i}/E_{i-1}\right)$. 

Then, the sequence of slopes $\lambda_{1}>\lambda_{2}>\cdots>\lambda_{r}$
and $-\lambda_{r}+2g-2-\ell s>-\lambda_{r-1}+2g-2-\ell s>\cdots>-\lambda_{1}+2g-2-\ell s$
are the same sequences, i.e. 
\[
\lambda_{i}=-\lambda_{r+1-i}+2g-2-\ell s.
\]
This means 
\[
\lambda_{\left\lceil r/2\right\rceil +1}<g-1-\frac{\ell s}{2}\text{ and }\lambda_{\left\lceil r/2\right\rceil }\ge g-1-\frac{\ell s}{2}.
\]

\begin{lem}
\label{lem:noglobal}Let $E$ be semi-stable and of slope $\lambda<0$.
Then $E$ has no global sections. 
\begin{proof}
Suppose $E$ has a global section, i.e. a (necessarily injective)
morphism $\mathcal{O}\to E$. By semi-stability of $E$, it follows
that $0=\mu\left(\mathcal{O}\right)\le\mu(E)$, which is a contradiction. 
\end{proof}
\end{lem}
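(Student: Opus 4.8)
The plan is to argue by contradiction, essentially as the statement suggests, but paying attention to the fact that the paper's notion of semi-stability is phrased in terms of sub-\emph{bundles}, so a little care is needed to pass from an arbitrary nonzero section to an honest sub-line-bundle. First I would suppose $E$ has a nonzero global section, i.e.\ a nonzero morphism $s\colon\mathcal{O}_C\to E$. Since $C$ is a smooth (hence integral) curve and $\mathcal{O}_C$ is a line bundle, $\ker s$ is a subsheaf of $\mathcal{O}_C$ of rank $<1$, hence torsion, hence zero; so $s$ is injective and $\im(s)\subseteq E$ is a rank-one subsheaf isomorphic to $\mathcal{O}_C$.

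Next I would replace $\im(s)$ by its saturation $L'\subseteq E$. On a smooth curve the quotient $E/L'$ is torsion-free, hence locally free, so $L'$ is a genuine sub-line-bundle of $E$. The inclusion of line bundles $\mathcal{O}_C\cong\im(s)\hookrightarrow L'$ forces $\deg L'\ge\deg\mathcal{O}_C=0$, and therefore $\mu(L')=\deg L'\ge 0$.

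Finally I would invoke semi-stability of $E$ applied to the sub-bundle $L'$: this gives $\mu(L')\le\mu(E)=\lambda<0$, contradicting $\mu(L')\ge 0$. Hence $H^0(C,E)=0$. There is no real obstacle here; the only step requiring a moment's thought is taking the saturation of $\im(s)$ so that the semi-stability inequality applies to an actual sub-bundle rather than merely a subsheaf—over a higher-dimensional base one would phrase this in terms of the saturation as a coherent subsheaf, but over a smooth curve it is automatically a sub-line-bundle, which is exactly what is needed.
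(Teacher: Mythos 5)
Your proof is correct and follows essentially the same route as the paper's, which simply applies the semi-stability inequality directly to $\mathcal{O}\hookrightarrow E$. The one refinement you add—passing to the saturation so the inequality is applied to an honest sub-bundle, as the paper's definition literally requires—is a standard detail the paper elides; it does not change the argument in any substantial way.
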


Consider the short exact sequences 
\[
0\to E_{i}\to E_{i+1}\to E_{i+1}/E_{i}\to0.
\]
We have $h^{0}\left(C,E_{i+1}\right)\le h^{0}\left(C,E_{i}\right)+h^{0}\left(C,E_{i+1}/E_{i}\right)$
and $h^{0}\left(C,E_{i+1}\right)\ge h^{0}\left(C,E_{i}\right)$, so
\[
h^{0}\left(C,E\right)\le h^{0}\left(C,E_{r'}\right)+h^{0}\left(C,E_{r'+1}/E_{r'}\right)+\cdots+h^{0}\left(C,E_{r}/E_{r-1}\right)
\]
and 
\[
h^{0}\left(C,E\left(-s\infty\right)\right)\ge h^{0}\left(C,E_{r'}\left(-s\infty\right)\right)
\]
for any $r'$. 

Then, by inductively computing Euler characteristics and applying
Riemann--Roch for vector bundles on $C$, we have 

\begin{align*}
h^{0}\left(C,E\right) & \le h^{0}\left(C,E_{r'}\right)+h^{0}\left(C,E_{r'+1}/E_{r'}\right)+\cdots+h^{0}\left(C,E_{r}/E_{r-1}\right)\\
 & =h^{1}\left(C,E_{r'}\right)+\sum_{i=1}^{r'}\rk\left(E_{i}/E_{i-1}\right)\left(1-g+\lambda_{i}\right)+\sum_{i=r'+1}^{r}h^{0}\left(C,E_{i}/E_{i-1}\right).
\end{align*}

Similarly, we have 
\[
h^{0}\left(C,E\left(-s\infty\right)\right)\ge h^{1}\left(C,E_{r'}\left(-s\infty\right)\right)+\sum_{i=1}^{r'}\rk\left(E_{i}/E_{i-1}\right)\left(1-g+\lambda_{i}-s\right).
\]

\begin{lem}
Suppose $s\ge0$. Then $h^{1}\left(C,E_{r'}\right)\le h^{1}\left(C,E_{r'}\left(-s\infty\right)\right)$.
\label{lem:h1bound}
\end{lem}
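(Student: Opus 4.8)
The plan is to extract this from the long exact sequence in cohomology attached to the standard short exact sequence of sheaves on $C$ cutting out the effective divisor $s\infty$. Since $s\ge0$, the divisor $s\infty$ is effective, so tensoring $0\to\mathcal{O}_C\left(-s\infty\right)\to\mathcal{O}_C\to\mathcal{O}_{s\infty}\to0$ by the locally free sheaf $E_u$ (it is a sub-bundle of a vector bundle, hence locally free) yields an exact sequence
\[
0\to E_u\left(-s\infty\right)\to E_u\to E_u\otimes\mathcal{O}_{s\infty}\to0.
\]
The quotient sheaf $E_u\otimes\mathcal{O}_{s\infty}$ is a coherent sheaf whose support is contained in the finite (zero-dimensional) subscheme $s\infty\subset C$.

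The key observation is then that $H^1\left(C,E_u\otimes\mathcal{O}_{s\infty}\right)=0$: a coherent sheaf supported on a zero-dimensional scheme is flasque up to the scheme-theoretic support, or more simply, $H^i$ of a sheaf supported on a scheme of dimension $0$ vanishes for $i\ge1$ (Grothendieck vanishing). Feeding this into the long exact sequence
\[
\cdots\to H^1\left(C,E_u\left(-s\infty\right)\right)\to H^1\left(C,E_u\right)\to H^1\left(C,E_u\otimes\mathcal{O}_{s\infty}\right)=0
\]
shows that the natural restriction map $H^1\left(C,E_u\left(-s\infty\right)\right)\to H^1\left(C,E_u\right)$ is surjective, and therefore $h^1\left(C,E_u\right)\le h^1\left(C,E_u\left(-s\infty\right)\right)$, as claimed.

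There is essentially no serious obstacle here; the only point requiring care is the hypothesis $s\ge0$, which guarantees that $s\infty$ is an effective divisor so that the three-term sequence above is exact with the arrows in the stated direction (for $s<0$ the inclusion would reverse and the conclusion would fail). One could alternatively phrase the same argument via Serre duality, rewriting both sides as $h^0$ of twists of $E_u^{\vee}\otimes K_C$ and invoking that adding an effective divisor to a line-bundle twist can only enlarge the space of global sections, but the cohomology-of-a-skyscraper argument is the cleanest and is what I would write up.
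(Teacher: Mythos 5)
Your proof is correct, and it is a genuinely different (and cleaner) route than the one in the paper. You twist the ideal-sheaf sequence of the effective divisor $s\infty$ by the locally free sheaf $E_u$ to get
\[
0\to E_u\left(-s\infty\right)\to E_u\to E_u\otimes\mathcal{O}_{s\infty}\to0,
\]
and then use that $H^1$ of the skyscraper term vanishes to read off surjectivity of $H^1\left(C,E_u\left(-s\infty\right)\right)\to H^1\left(C,E_u\right)$ from the long exact sequence. The paper instead invokes the self-duality identification $E_{r'}\cong\left(E/E_{r-r'}\right)^{\vee}\otimes K_C\left(-\ell s\infty\right)$ established just before the lemma, rewrites the $H^1\to H^1$ map via Serre duality as the dual of an $H^0\to H^0$ map, and observes that the latter is injective because $E/E_{r-r'}\hookrightarrow\left(E/E_{r-r'}\right)\left(s\infty\right)$. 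Your argument is preferable in a sense: it is purely local, works for any vector bundle $E_u$ in place of a Harder--Narasimhan step, and does not rely on the delicate self-duality of $E$ or on Serre duality bookkeeping (including the twist by $K_C\left(-\ell s\infty\right)$, which the paper's displayed formulas actually suppress). The paper's approach is natural in its surrounding context, where the self-dual structure of $E$ is already front and center, but nothing about the lemma itself requires it. You even flag the Serre-duality alternative at the end, so you've identified exactly the space of reasonable proofs.
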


\begin{proof}
To see this, note that we have the injection $E_{r'}\left(-s\infty\right)\hookrightarrow E_{r'}$,
and we want to show 
\[
H^{1}\left(C,E_{r'}\left(-s\infty\right)\right)\to H^{1}\left(C,E_{r'}\right)
\]
 is a surjection. Using (\ref{eq:relationshippieceofdual}), we can
write this map as 
\[
H^{1}\left(C,\left(E/E_{r-r'}\right)^{\vee}\otimes K_{C}\left(-(\ell+1)s\infty\right)\right)\to H^{1}\left(C,\left(E/E_{r-r'}\right)^{\vee}\otimes K_{C}\left(-\ell s\infty\right)\right),
\]
which by Serre duality we can write as 
\[
H^{0}\left(C,\left(E/E_{r-r'}\right)\left((\ell+1)s\infty\right)\right)^{\vee}\to H^{0}\left(C,\left(E/E_{r-r'}\right)\left(\ell s\infty\right)\right)^{\vee}.
\]
Taking duals, this is the same as the map 
\[
H^{0}\left(C,\left(E/E_{r-r'}\right)\left(\ell s\infty\right)\right)\to H^{0}\left(C,\left(E/E_{r-r'}\right)\left((\ell+1)s\infty\right)\right),
\]
which is evidently an injection, since $\left(E/E_{r-r'}\right)\left(\ell s\infty\right)\hookrightarrow\left(E/E_{r-r'}\right)\left((\ell+1)s\infty\right)$.
\end{proof}
Using the work above, we are finally able to compare $h^{0}\left(C,E\right)$
with $h^{0}\left(C,E(-s\infty)\right)$. See Lemma 6.4 of \cite{cohomology_circle}
for a similar statement when $g=0$. 
\begin{lem}
Suppose $s\ge\max\left(2g-1,2\right).$ Then,
\[
h^{0}\left(C,E\right)-h^{0}\left(C,E\left(-s\infty\right)\right)\le\begin{cases}
\left(n+1\right)s & \text{if }g=1\text{ or }\ell\ge1,\\
\left(n+1\right)\left(s+\frac{g+1}{2}\right) & \text{else}.
\end{cases}.
\]
\begin{proof}
Let us split into two cases, depending on the parity of $r$. 

Suppose $r$ is even, say $r=2u$, and using the notation right before
the lemma, say $r'=r/2=u$. Then, Lemma \ref{lem:h1bound} implies
\begin{align*}
 & h^{0}\left(C,E\right)-h^{0}\left(C,E\left(-s\infty\right)\right)\\
 & \le0+\sum_{i=1}^{r'}\rk\left(E_{i}/E_{i-1}\right)\left(\left(1-g+\lambda_{i}\right)-\left(1-g+\lambda_{i}-s\right)\right)+\sum_{i=r'+1}^{r}h^{0}\left(C,E_{i}/E_{i-1}\right)\\
 & =\sum_{i=1}^{r'}\rk\left(E_{i}/E_{i-1}\right)s+\sum_{i=r'+1}^{r}h^{0}\left(C,E_{i}/E_{i-1}\right)\\
 & \le\rk\left(E_{u}\right)s+\sum_{i=r'+1}^{r}h^{0}\left(C,E_{i}/E_{i-1}\right)\\
 & =(n+1)s+\sum_{i=r'+1}^{r}h^{0}\left(C,E_{i}/E_{i-1}\right).
\end{align*}
In the last step, note that $\rk E_{i}+\rk E_{r-i}=\rk E_{r}=2n+2$,
so $2\rk E_{u}=\rk E_{r}=2n+2$. 

We have $\lambda_{u+1}<g-1-\ell s/2,$ so for $g=1$ or $\ell\ge1$,
by Lemma \ref{lem:noglobal}, the sum $\sum_{i=r'+1}^{r}h^{0}\left(E_{i}/E_{i-1}\right)$
is simply 0. For $g\ge2$ and $\ell=0$, by Clifford's theorem for
vector bundles (Theorem 2.1 of \cite{brambila95}), we have 
\begin{align*}
\sum_{i=r'+1}^{r}h^{0}\left(E_{i}/E_{i-1}\right) & \le\sum_{i=r'+1}^{r}\rk\left(E_{i}/E_{i-1}\right)\left(1+\frac{\lambda_{i}}{2}\right)\\
 & <\rk\left(E/E_{u}\right)\left(1+\frac{g-1}{2}\right)\\
 & =\left(n+1\right)\left(\frac{g+1}{2}\right).
\end{align*}

Hence, for $r$ even, we have 
\[
h^{0}\left(C,E\right)-h^{0}\left(C,E\left(-s\infty\right)\right)\le\begin{cases}
\left(n+1\right)s & \text{if }g=1\text{ or }\ell\ge1,\\
\left(n+1\right)\left(s+\frac{g+1}{2}\right) & \text{else}.
\end{cases}.
\]

If $r$ is odd, say $r=2u+1$, then using the notation of the lemma,
say $r'=\left\lceil r/2\right\rceil =u+1$. 

For $\ell\ge1$, since $\lambda_{u+1}=g-1-\ell s/2<0$, the same argument
as above gives 
\begin{align*}
h^{0}\left(C,E\right)-h^{0}\left(C,E\left(-s\infty\right)\right) & \le\rk\left(E_{u}\right)s\le ns.
\end{align*}

For $\ell=0$, we have 
\begin{align*}
h^{0}\left(C,E\right) & \le h^{1}\left(C,E_{r'}\right)+\sum_{i=1}^{r'}\rk\left(E_{i}/E_{i-1}\right)\left(1-g+\lambda_{i}\right)+\sum_{i=r'+1}^{r}h^{0}\left(C,E_{i}/E_{i-1}\right)\\
 & \le h^{1}\left(C,E_{r'-1}\right)+h^{1}\left(C,E_{r'}/E_{r'-1}\right)+\sum_{i=1}^{r'}\rk\left(E_{i}/E_{i-1}\right)\left(1-g+\lambda_{i}\right)+\sum_{i=r'+1}^{r}h^{0}\left(C,E_{i}/E_{i-1}\right)
\end{align*}
and 
\[
h^{0}\left(C,E\left(-s\infty\right)\right)\ge h^{1}\left(C,E_{r'-1}\left(-s\infty\right)\right)+\sum_{i=1}^{r'-1}\rk\left(E_{i}/E_{i-1}\right)\left(1-g+\lambda_{i}-s\right).
\]
Using the lemma above, the fact that $\lambda_{u+1}=g-1$, and Serre
duality, we get 
\begin{align*}
h^{0}\left(C,E\right)-h^{0}\left(C,E\left(-s\infty\right)\right) & \le h^{1}\left(C,E_{u+1}/E_{u}\right)+\rk\left(E_{u}\right)s+\sum_{i=r'+1}^{r}h^{0}\left(C,E_{i}/E_{i-1}\right)\\
 & =h^{0}\left(C,\left(E_{u+1}/E_{u}\right)^{\vee}\otimes K_{C}\right)+\rk\left(E_{u}\right)s+\sum_{i=r'+1}^{r}h^{0}\left(C,E_{i}/E_{i-1}\right).
\end{align*}

Note that $\left(E_{u+1}/E_{u}\right)^{\vee}\otimes K_{C}$ has the
same rank as $E_{u+1}/E_{u}$, but has slope $-\lambda_{u+1}+2g-2=g-1$.
Since $E_{u+1}/E_{u}$ is semi-stable, $\left(E_{u+1}/E_{u}\right)^{\vee}\otimes K_{C}$
is semi-stable as well. 

Clifford's theorem for vector bundles tells us that 
\[
h^{0}\left(C,\left(E_{u+1}/E_{u}\right)^{\vee}\otimes K_{C}\right)\le\rk\left(E_{u+1}/E_{u}\right)+\frac{\rk\left(E_{u+1}/E_{u}\right)\left(g-1\right)}{2},
\]
so 
\begin{align*}
 & h^{0}\left(C,E\right)-h^{0}\left(C,E\left(-s\infty\right)\right)\\
 & \le\rk\left(E_{u+1}/E_{u}\right)+\frac{\rk\left(E_{u+1}/E_{u}\right)\left(g-1\right)}{2}+\rk\left(E_{u}\right)s+\sum_{i=r'+1}^{r}h^{0}\left(C,E_{i}/E_{i-1}\right)\\
 & =\rk\left(E_{u+1}/E_{u}\right)\left(\frac{g+1}{2}\right)+\rk\left(E_{u}\right)s+\sum_{i=u+2}^{r}h^{0}\left(C,E_{i}/E_{i-1}\right).
\end{align*}

We have $\lambda_{u+2}<g-1,$ so for $g=1$, by Lemma \ref{lem:noglobal},
the sum $\sum_{i=u+2}^{r}h^{0}\left(C,E_{i}/E_{i-1}\right)$ is simply
0. For $g\ge2$, by Clifford's theorem for vector bundles, we have 

\begin{align*}
\sum_{i=u+2}^{r}h^{0}\left(C,E_{i}/E_{i-1}\right) & =\sum_{i=u+2}^{r}\rk\left(E_{i}/E_{i-1}\right)\left(1+\frac{\lambda_{i}}{2}\right)<\rk\left(E/E_{u+1}\right)\left(1+\frac{g-1}{2}\right).
\end{align*}

If $u=0$, we have $E=E_{1}$ and $E_{0}=0$, so 
\begin{align*}
h^{0}\left(C,E\right)-h^{0}\left(C,E\left(-s\infty\right)\right) & \le\rk\left(E\right)\left(\frac{g+1}{2}\right)=\left(n+1\right)\left(g+1\right).
\end{align*}

If $u\ge1$, then $E_{u}\ne0$, so $\rk\left(E_{u}\right)\ge1$. Write
$a=\rk\left(E_{u}\right)\le n$. Then $\rk\left(E_{u+1}\right)=2n+2-a$,
so for $g=1$, we have 
\begin{align*}
h^{0}\left(C,E\right)-h^{0}\left(C,E\left(-s\infty\right)\right) & \le\left(2n+2-a-a\right)\left(\frac{g+1}{2}\right)+as\\
 & =2\left(n+1\right)+a\left(s-2\right)\\
 & \le ns+2
\end{align*}
(by assumption on $s$), and for $g\ge2$, we have 
\begin{align*}
h^{0}\left(C,E\right)-h^{0}\left(C,E\left(-s\infty\right)\right) & \le\left(2n+2-a-a\right)\left(\frac{g+1}{2}\right)+as+a\left(\frac{g+1}{2}\right)\\
 & =\left(n+1\right)\left(g+1\right)+a\left(s-\frac{g+1}{2}\right)\\
 & \le ns+\left(g+1\right)\left(\frac{n}{2}+1\right).
\end{align*}
The bounds for the even case are weaker for both $g=1$ and $g\ge2$,
and hence the result follows.
\end{proof}
\end{lem}

By chaining the lemma above, we arrive at a higher genus variant of
Davenport's shrinking lemma.
\begin{prop}
\label{prop:shrinking} Suppose $s\ge\max\left(2g-1,2\right)$. Then,
we have 
\[
\frac{N(\alpha)}{N_{s}\left(\alpha\right)}\le\begin{cases}
q^{\left(d-1\right)\left(n+1\right)s} & \text{if }g=1,\\
q^{\left(d-1\right)\left(n+1\right)s+\left(n+1\right)\left(g+1\right)/2} & \text{else}.
\end{cases}
\]
\begin{proof}
Recall that $E$ depends on $s$ and $\ell$, as well as $x^{(1)},\ldots,x^{(\ell)}\in P_{e-s,C}^{n+1}$
and $x^{(\ell+2)},\ldots,x^{(d-1)}\in P_{e,C}^{n+1}$. 

For $\ell=0$, let us first compare $N(\alpha)=N_{s,0}(\alpha)$ with
$N_{s,1}(\alpha)$. Recall that $N(\alpha)$ is the number of tuples
$\left(x^{(1)},\ldots,x^{(d-1)}\right)\in\left(P_{e,C}^{n+1}\right)^{d-1}$
such that $\alpha\left(\Psi_{j}\left(x^{(1)},\ldots,x^{(d-1)}\right)x\right)=0$
for $x\in P_{e,C}$ and all $j$. Similarly, $N_{s,1}(\alpha)$ is
the number of tuples $\left(x^{(1)},x^{(2)},\ldots,x^{(d-1)}\right)\in P_{e-s,C}\times\left(P_{e,C}^{n+1}\right)^{d-2}$
such that $\alpha\left(\Psi_{j}\left(x^{(1)},\ldots,x^{(d-1)}\right)x\right)=0$
for $x\in P_{e+s,C}$ and all $j$. Then, using Lemmas \ref{lem:globalsectionswithKsl}
and \ref{lem:globalsectionswithKslII}, as well as their notation,
observe that 
\[
\frac{N(\alpha)}{N_{s,1}(\alpha)}\le\frac{K_{s,0}(\alpha)}{K_{s,0}'(\alpha)}=\frac{\#H^{0}\left(C,E\right)}{\#H^{0}\left(C,E(-s\infty)\right)}=q^{h^{0}\left(C,E\right)-h^{0}\left(C,E(-s\infty)\right)}\le\begin{cases}
q^{(n+1)s} & \text{if }g=1,\\
q^{\left(n+1\right)\left(s+\frac{g+1}{2}\right)} & \text{else}.
\end{cases}
\]

Similarly, we have 
\[
\frac{N_{s,1}(\alpha)}{N_{s,2}(\alpha)},\ldots,\frac{N_{s,d-2}(\alpha)}{N_{s,d-1}(\alpha)=N_{s}(\alpha)}\le q^{(n+1)s}.
\]

Chaining these inequalities gives 

\[
\frac{N(\alpha)}{N_{s}\left(\alpha\right)}\le\begin{cases}
q^{\left(d-1\right)\left(n+1\right)s} & \text{if }g=1,\\
q^{\left(d-1\right)\left(n+1\right)s+\left(n+1\right)\left(g+1\right)/2} & \text{else}.
\end{cases}
\]
\end{proof}
\end{prop}

\section{Bounding $N_{s}\left(\alpha\right)$}

We continue using the same notation from the previous section. Next,
we'll choose a suitable $s$ so that $N_{s}(\alpha)$ is easier to
bound directly. In particular, we will be able to remove the dependence
on $\alpha$, so that we have 
\[
N_{s}(\alpha)=\#\left\{ \left(x^{(1)},\ldots,x^{(d-1)}\right)\in\left(P_{e-s,C}^{n+1}\right)^{d-1}:\Psi_{j}\left(x^{(1)},\ldots,x^{(d-1)}\right)=0\text{ for all }j\right\} .
\]
Then, a linear projection argument (exactly as in \cite{BrowningVisheRatCurves})
allows us to get a good bound on $N_{s}(\alpha)$. 

Depending on $s$, we can view each $\Psi_{j}$ as a function 
\[
\left(H^{0}\left(C,L\left(-s\infty\right)\right)^{n+1}\right)^{d-1}\to H^{0}\left(C,L\left(-s\infty\right)^{\otimes(d-1)}\right)
\]
and hence $\alpha\left(\Psi_{j}\left(x^{(1)},\cdots,x^{(d-1)}\right)-\right)$
as a function from $H^{0}\left(C,L\left(\left(d-1\right)s\infty\right)\right)$
to $\F_{q}$.

Suppose $\alpha$ factors through $Z$ minimally. 
\begin{lem}
\label{lem:getridofalpha}If 
\[
s>\max\left(\frac{\deg Z-e+2g-2}{d-1},e-\frac{\deg Z}{d-1}\right),
\]
then $\alpha\left(\Psi_{j}\left(x^{(1)},\cdots,x^{(d-1)}\right)-\right)=0$
implies $\Psi_{j}\left(x^{(1)},\cdots,x^{(d-1)}\right)=0$. 
\end{lem}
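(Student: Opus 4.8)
The plan is to convert the hypothesis into a factorization statement about $\alpha$ and then contradict the minimality of $Z$ via the intersection lemma above. First I would set $g_j:=\Psi_j(x^{(1)},\dots,x^{(d-1)})$ and note that, since each $x^{(i)}\in P_{e-s,C}^{n+1}$ and $\Psi_j$ is $(d-1)$-multilinear for the cup product, $g_j$ is a global section of $L^{\otimes(d-1)}(-(d-1)s\infty)$. If $g_j=0$ there is nothing to prove, so I would assume $g_j\neq 0$; then the line bundle it lives in has degree $(d-1)(e-s)\ge 0$, so $e\ge s$, and $g_j$ cuts out an effective divisor $Z(g_j)$ of degree $(d-1)(e-s)$. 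The next observation is that multiplication by $g_j$ is an injection $H^0\bigl(C,L((d-1)s\infty)\bigr)\hookrightarrow P_{de,C}$ whose image is exactly $P_{de-Z(g_j),C}$ --- a section $t$ of $L^{\otimes d}$ is divisible by $g_j$ precisely when it vanishes along $Z(g_j)$ --- so the hypothesis $\alpha\bigl(\Psi_j(\cdots)\,-\,\bigr)=0$ says exactly that $\alpha$ vanishes on $P_{de-Z(g_j),C}$, i.e. $\alpha\sim Z(g_j)$.

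Then I would apply the intersection lemma to $Z_1=Z$ (valid since $\alpha\sim_{\min}Z$) and $Z_2=Z(g_j)$. The smallest subscheme $W$ containing both has $\deg W\le\deg Z+(d-1)(e-s)$, and the first term in the maximum, $s>\tfrac{\deg Z-e+2g-2}{d-1}$, is precisely what is needed to guarantee $\deg W<de-2g+2$; the lemma then gives $\alpha\sim Z\cap Z(g_j)$. Since $Z\cap Z(g_j)\subseteq Z$ and $Z$ is a \emph{minimal} subscheme through which $\alpha$ factors, this forces $Z\cap Z(g_j)=Z$, hence $Z\subseteq Z(g_j)$ and $\deg Z\le(d-1)(e-s)$. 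But the second term in the maximum, $s>e-\tfrac{\deg Z}{d-1}$, is exactly the statement $(d-1)(e-s)<\deg Z$, a contradiction; so $g_j=0$, as desired.

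The step I expect to require the most care is the line-bundle bookkeeping in the middle: correctly identifying the image of multiplication by $g_j$ as $P_{de-Z(g_j),C}$, tracking the degree $(d-1)(e-s)$ of $Z(g_j)$, and checking that the two halves of the hypothesis on $s$ align so that each feeds exactly one of the two subsequent steps (the first making the intersection lemma applicable, the second producing the contradiction). It is worth recording that on a smooth curve every $0$-dimensional closed subscheme is an effective Cartier divisor, so $Z$, $Z(g_j)$, and their scheme-theoretic intersection and union all behave like ordinary divisors with $\deg(\text{union})+\deg(\text{intersection})=\deg Z+\deg Z(g_j)$; with that in hand the whole argument is just Riemann--Roch bookkeeping together with the lemma from Section~\ref{sec:settingupcircle} that compares two subschemes through which $\alpha$ factors.
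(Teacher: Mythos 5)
Your proof is correct, and it reaches the same intermediate conclusion as the paper's argument --- namely that $g_j|_Z=0$, equivalently $Z\subseteq Z(g_j)$ --- but via a genuinely cleaner route. Where the paper dualizes the long exact sequence for $L((d-1)s\infty)$ restricted to $Z$, invokes the vanishing $H^{1}(C,L(-Z+(d-1)s\infty))=0$ to get a short exact sequence of duals, and then argues directly from minimality of $Z$ that the restricted section $g_j|_Z$ must vanish, you instead make the crisp observation that multiplication by $g_j$ identifies $H^0(C,L((d-1)s\infty))$ with $P_{de-Z(g_j),C}$, so the hypothesis is literally the assertion $\alpha\sim Z(g_j)$, and then you feed this into the intersection lemma from Section~\ref{sec:settingupcircle} together with $\alpha\sim_{\min}Z$. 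The first half of the bound on $s$ is exactly what makes the intersection lemma applicable (it guarantees $\deg(Z\cup Z(g_j))<de-2g+2$, which is the same Riemann--Roch computation the paper performs when it checks $H^1=0$, just repackaged), and the second half gives the degree contradiction $\deg Z\le\deg Z(g_j)=(d-1)(e-s)<\deg Z$. The paper's second half of the hypothesis plays the identical role, phrased as the injectivity of $H^0(C,L^{\otimes(d-1)}(-(d-1)s\infty))\to H^0(Z,\cdot)$. So what your approach buys is chiefly conceptual economy: you avoid redoing the exact-sequence bookkeeping by outsourcing it to the previously proved intersection lemma, and you make minimality of $Z$ do its work globally (forcing $Z\cap Z(g_j)=Z$) rather than locally at each closed point of $Z$ as the paper does. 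The only bookkeeping worth stating explicitly, which you do, is that all the relevant $0$-dimensional subschemes of a smooth curve are effective Cartier divisors, so the union, intersection, and degree additivity are all unproblematic.
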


\begin{proof}
We get a long exact sequence \begin{center}
\begin{tikzcd}   H^{0}\left(C,L\left(-Z+(d-1)s\infty\right)\right) \rar & H^{0}\left(C,L\left((d-1)s\infty\right)\right) \rar              \ar[draw=none]{d}[name=X, anchor=center]{}     & H^{0}\left(Z,L|_{Z}\left((d-1)s\infty\right)\right) \ar[rounded corners,             to path={ -- ([xshift=2ex]\tikztostart.east)                       |- (X.center) \tikztonodes                       -| ([xshift=-2ex]\tikztotarget.west)                       -- (\tikztotarget)}]{dll}[at end]{} \\         H^{1}\left(C,L\left(-Z+(d-1)s\infty\right)\right) \rar & H^{1}\left(C,L\left((d-1)s\infty\right)\right) \rar & H^{1}\left(Z,L|_{Z}\left((d-1)s\infty\right)\right) \end{tikzcd} 
\end{center} Taking duals, we get \begin{center}
\begin{tikzcd}   H^{1}\left(Z,L|_{Z}\left((d-1)s\infty\right)\right)^\vee \rar & H^{1}\left(C,L\left((d-1)s\infty\right)\right)^\vee \rar              \ar[draw=none]{d}[name=X, anchor=center]{}     & H^{1}\left(C,L\left(-Z+(d-1)s\infty\right)\right)^\vee \ar[rounded corners,             to path={ -- ([xshift=2ex]\tikztostart.east)                       |- (X.center) \tikztonodes                       -| ([xshift=-2ex]\tikztotarget.west)                       -- (\tikztotarget)}]{dll}[at end]{} \\         H^{0}\left(Z,L|_{Z}\left((d-1)s\infty\right)\right)^\vee \rar & H^{0}\left(C,L\left((d-1)s\infty\right)\right)^\vee \rar & H^{0}\left(C,L\left(-Z+(d-1)s\infty\right)\right)^\vee \end{tikzcd} 
\end{center}

We have 
\[
s>\frac{\deg Z-e+2g-2}{d-1},
\]
i.e. $\deg Z<(d-1)s+e-2g+2$, which implies 
\[
H^{1}\left(C,L\left(-Z+(d-1)s\infty\right)\right)^{\vee}\cong H^{0}\left(C,L^{\vee}\left(Z-(d-1)s\infty+K_{C}\right)\right)
\]
(by Serre duality) is zero. Then, the above exact sequence beomes
the short exact sequence 
\[
0\to H^{0}\left(Z,L|_{Z}\left(\left(d-1\right)s\infty\right)\right)^{\vee}\to H^{0}\left(C,L\left(\left(d-1\right)s\infty\right)\right)^{\vee}\to H^{0}\left(C,L\left(-Z+\left(d-1\right)s\infty\right)\right)^{\vee}\to0.
\]

Note that $\alpha\left(\Psi_{j}\left(x^{(1)},\cdots,x^{(d-1)}\right)-\right)$
is an element of $H^{0}\left(C,L\left((d-1)s\infty\right)\right)^{\vee}$. 

Since $\alpha\sim Z$, we know $\alpha\left(\Psi_{j}\left(x^{(1)},\cdots,x^{(d-1)}\right)-\right)$
is an element of $H^{0}\left(Z,L|_{Z}\left((d-1)s\infty\right)\right)^{\vee}$
in the short exact sequence above.

If $\alpha\left(\Psi_{j}\left(x^{(1)},\cdots,x^{(d-1)}\right)-\right)$
(now viewed as an element of $H^{0}\left(Z,L|_{Z}\left((d-1)s\infty\right)\right)^{\vee}$)
is 0, then $\alpha$ factoring through $Z$ minimally means that it
doesn't vanish on any closed point of $Z$, which amounts to saying
that each $\Psi_{j}\left(x^{(1)}|_{Z},\cdots,x^{(d-1)}|_{Z}\right)x$
with $x\in H^{0}\left(C,L\left(\left(d-1\right)s\infty\right)\right)|_{Z}$
is 0. 

Since $H^{1}\left(C,L\left(-Z+(d-1)s\infty\right)\right)=0$, it follows
from the first long exact sequence above that $H^{0}\left(C,L\left(\left(d-1\right)s\infty\right)\right)\to H^{0}\left(Z,L|_{Z}\left(\left(d-1\right)s\infty\right)\right)$
is a surjection. In other words, we have $H^{0}\left(Z,L|_{Z}\left(\left(d-1\right)s\infty\right)\right)=H^{0}\left(C,L\left(\left(d-1\right)s\infty\right)\right)|_{Z}$.

Then, $\Psi_{j}\left(x^{(1)}|_{Z},\cdots,x^{(d-1)}|_{Z}\right)x$
vanishes on every $x\in H^{0}\left(Z,L|_{Z}\left(\left(d-1\right)s\infty\right)\right),$
which means that $\Psi_{j}\left(x^{(1)}|_{Z},\cdots,x^{(d-1)}|_{Z}\right)$
is zero. 

Finally, since 
\[
s>e-\frac{\deg Z}{d-1},
\]
i.e. $\deg Z>(d-1)(e-s)$, we have $H^{0}\left(C,L\left(-s\infty\right)^{\otimes(d-1)}\left(-Z\right)\right)=0$.
Using the exact sequence 
\[
0\to H^{0}\left(C,L\left(-s\infty\right)^{\otimes(d-1)}\left(-Z\right)\right)\to H^{0}\left(C,L\left(-s\infty\right)^{\otimes(d-1)}\right)\to H^{0}\left(Z,L|_{Z}\left(-s\infty\right)^{\otimes(d-1)}\right),
\]
it follows that 
\[
H^{0}\left(C,L\left(-s\infty\right)^{\otimes(d-1)}\right)\to H^{0}\left(Z,L|_{Z}\left(-s\infty\right)^{\otimes(d-1)}\right)
\]
is injective. Under this map $\Psi_{j}\left(x^{(1)},\cdots,x^{(d-1)}\right)$
maps to $\Psi_{j}\left(x^{(1)}|_{Z},\cdots,x^{(d-1)}|_{Z}\right),$
which is zero, so we conclude that $\Psi_{j}\left(x^{(1)},\cdots,x^{(d-1)}\right)$
is zero as well.
\end{proof}
So we take 
\begin{equation}
s=\max\left(\left\lfloor \frac{\deg Z-e+2g-2}{d-1}\right\rfloor ,\left\lfloor e-\frac{\deg Z}{d-1}\right\rfloor ,2g-2,1\right)+1.\label{eq:choiceofs}
\end{equation}
Note that the first term dominates the second iff $\deg Z>de/2-g+1$.
The terms $2g-2$ and 1 come from Proposition \ref{prop:shrinking}. 

For this choice of $s$, we then have 
\[
N_{s}\left(\alpha\right)=\#\left\{ \left(x^{(1)},\ldots,x^{(d-1)}\right)\in\left(P_{e-s,C}^{n+1}\right)^{d-1}:\Psi_{j}\left(x^{(1)},\ldots,x^{(d-1)}\right)=0\text{ for all }j\right\} .
\]

By a standard argument (see the two paragraphs preceding Lemma 5.3
in \cite{BrowningVisheRatCurves}), the system of equations $\left\{ \Psi_{j}=0\right\} _{j}$
defines an affine variety $V$ in $\A_{K(C)}^{\left(d-1\right)\left(n+1\right)}$
of dimension at most $(d-2)(n+1)$. 

Indeed, by definition, the intersection of $V$ with the diagonal
$\Delta\cong\A_{K(C)}^{n+1}$ in $\A_{K(C)}^{\left(d-1\right)\left(n+1\right)}$
is contained in the singular locus of the smooth hypersurface cut
out by $f.$ So 
\[
0\ge\dim V+\dim\Delta-(d-1)(n+1)=\dim V-(d-2)(n+1).
\]
Lemma 2.8 from \cite{browningvishecubic15} in the higher genus setting
is the following (see also Lemma 29 in \cite{terminal}). 
\begin{prop}
\label{prop:nsbound}For the above choice of $s$,
\[
N_{s}\left(\alpha\right)=O_{d,n}\left(q^{(d-2)(n+1)h^{0}(C,L(-s\infty))}\right).
\]
\end{prop}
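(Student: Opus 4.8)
The plan is to count the $\F_q$-points of the affine variety $V \subset \A^{(d-1)(n+1)}_{K(C)}$ cut out by the equations $\{\Psi_j = 0\}_j$ after base-changing the coefficients appropriately, using the fact that $V$ has dimension at most $(d-2)(n+1)$. The key observation is that each $\Psi_j$ is a multilinear form, so the system $\{\Psi_j = 0\}$ is invariant under the natural scaling and, more importantly, behaves well under linear projection onto $d-2$ of the $d-1$ slots. The quantity $N_s(\alpha)$ counts tuples $(x^{(1)}, \ldots, x^{(d-1)}) \in (P_{e-s,C}^{n+1})^{d-1}$, which we think of as $K(C)$-points of $V$ satisfying integrality conditions at every place — equivalently, global sections of the appropriate sheaf on $C$. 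Since $P_{e-s,C} = H^0(C, L(-s\infty))$ is a finite-dimensional $\F_q$-vector space, say of dimension $m := h^0(C, L(-s\infty))$, the count $N_s(\alpha)$ is the number of $\F_q$-points of a finite-type $\F_q$-scheme cut out inside $\A^{(d-1)(n+1)m}$ by the coefficients (with respect to a fixed basis) of the equations $\Psi_j = 0$.

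\textbf{Step 1: Reduce to a point count over $\F_q$ with controlled dimension.} Spreading out $V$ over a suitable finitely generated base (or, more concretely, clearing denominators in the coefficients of $f$ and working integrally), the locus in $\A^{(d-1)(n+1)m}_{\F_q}$ defined by the vanishing of all the $\F_q$-coordinates of $\Psi_j(x^{(1)}, \ldots, x^{(d-1)})$ is a scheme whose geometric dimension I want to bound by $(d-2)(n+1)m$. If I can do that, then a crude bound on point counts of varieties of bounded degree and dimension (Lang–Weil, or even the elementary Schwartz–Zippel / Lang–Weil upper bound) gives $N_s(\alpha) = O_{d,n}(q^{(d-2)(n+1)m})$, which is exactly the claim, since the implied degree is bounded in terms of $d$ and $n$ only.

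\textbf{Step 2: The dimension bound via linear projection.} This is where the multilinearity of $\Psi_j$ is essential, and it is the main obstacle. The idea, following the linear-projection argument of \cite{key-5}, is: for a fixed value of the last $d-2$ tuples $(x^{(2)}, \ldots, x^{(d-1)})$, the condition $\Psi_j(x^{(1)}, x^{(2)}, \ldots, x^{(d-1)}) = 0$ for all $j$ is \emph{linear} in $x^{(1)} \in P_{e-s,C}^{n+1}$, so it cuts out a linear subspace; the question is to control the dimension of this subspace as $(x^{(2)}, \ldots, x^{(d-1)})$ varies. Because $V \subset \A^{(d-1)(n+1)}_{K(C)}$ has dimension $\le (d-2)(n+1)$ (established just before the proposition via the singular-locus argument), the generic fiber of the projection $V \to \A^{(d-2)(n+1)}_{K(C)}$ onto the last $d-2$ coordinates has dimension $0$, meaning generically the linear system in $x^{(1)}$ has full rank. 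Stratifying the base by the rank of this linear system and summing the contributions — each stratum of the base contributing $q^{(\text{codim of the } x^{(1)}\text{-solution space}) \cdot m}$ fewer points while the base stratum itself has dimension dropping correspondingly — yields the total bound $(d-2)(n+1)m$ on the dimension of the $\F_q$-scheme. The bookkeeping here (tracking the affine-space structure over $\F_q$ coming from a single $K(C)$-linear condition "spread out" over the $m$-dimensional space $P_{e-s,C}$) is the delicate part, but it is formally identical to the argument in Browning–Vishe \cite{key-5}, just with $P_{e-s,C}$ playing the role of their space of polynomial sections.

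\textbf{Step 3: Conclude.} Combining the dimension bound with a Lang–Weil-type upper bound (Theorem \ref{thm:LW}, or simply the trivial bound $\#W(\F_q) = O_{\deg W, \dim W}(q^{\dim W})$ for the affine scheme $W$ in question, whose degree is bounded purely in terms of $d$ and $n$), we obtain
\[
N_s(\alpha) = \#W(\F_q) = O_{d,n}\!\left(q^{(d-2)(n+1)h^0(C, L(-s\infty))}\right),
\]
as desired. The only place the choice of $s$ from \eqref{eq:choiceofs} enters is through Lemma \ref{lem:getridofalpha}, which has already been used to remove the dependence on $\alpha$ and rewrite $N_s(\alpha)$ in the $\alpha$-free form above; beyond that, the bound is uniform. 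I expect Step 2 — correctly globalizing the linear-projection stratification over the non-point base $\Spec K(C)$ and keeping the implied constants independent of $C$, $L$, and $q$ — to be the crux, though conceptually it is a direct transcription of the classical argument.
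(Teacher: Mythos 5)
Your high-level strategy (linear projection exploiting $\dim V \le (d-2)(n+1)$) is in the right ballpark, but the specific mechanism you propose in Steps 2 and 3 diverges from the paper's and contains two genuine gaps.

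\textbf{Gap 1: the coordinate projection onto the last $d-2$ blocks is not generically finite.} You claim that because $\dim V \le (d-2)(n+1)$, the projection $\pi\colon V \to \A^{(d-2)(n+1)}_{K(C)}$ onto $(x^{(2)},\ldots,x^{(d-1)})$ has $0$-dimensional generic fiber. This does not follow. The inequality $\dim V \le \dim(\text{target})$ is consistent with $\pi(V)$ being a proper closed subset with positive-dimensional fibers over it. Indeed, the fiber over any tuple with some $x^{(i)}=0$ is all of $\A^{n+1}$ by multilinearity, and there is no a priori reason that this pathology is confined to a locus of small enough dimension without further argument. Your fallback (stratifying by the rank $r$ of the linear system in $x^{(1)}$) is the right instinct, and one can show the strata $B_r$ satisfy $\dim B_r \le (d-3)(n+1)+r$ over $K(C)$ — but converting this into a bound on the $\F_q$-dimension of the corresponding locus inside $\A^{(d-2)(n+1)m}_{\F_q}$ is exactly the nontrivial ``Weil-restriction'' bookkeeping you defer, and it is not a formality.

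\textbf{Gap 2: the Lang--Weil constant is not $O_{d,n}(1)$.} Even granting the dimension bound $\dim W \le (d-2)(n+1)m$, your Step 3 applies Lang--Weil (or Schwartz--Zippel) to the $\F_q$-scheme $W \subset \A^{(d-1)(n+1)m}_{\F_q}$. The implied constant in any such point-count bound involves the degree (equivalently the number of top-dimensional geometrically irreducible components) of $W$. But $W$ is cut out by $(n+1)\cdot h^0(C,L(-s\infty)^{\otimes(d-1)})$ equations of degree $d-1$ in $(d-1)(n+1)m$ variables, and a priori $\deg W$ can grow exponentially in $m = h^0(C,L(-s\infty))$. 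Since $m$ depends on $e$, $g$, and $s$ — not just $d$ and $n$ — your bound would not have the uniformity $O_{d,n}$ that the proposition asserts.

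The paper avoids both problems by \emph{not} reducing to a geometric dimension/degree bound over $\F_q$. Instead it applies a linear change of coordinates (Noether normalization, as in lemma 2.8 of Browning--Vishe) so that the projection of $V$ onto the \emph{first} $\dim V$ coordinates becomes a \emph{finite} morphism, with fiber cardinality bounded purely in terms of $d$ and $n$. Via Beauville--Laszlo, solutions in $(P_{e-s,C}^{n+1})^{d-1}$ are $R$-points with norm constraints, and the image under the finite projection lands among the $R$-points of $\A^{\dim V}$ with the same norm constraints — of which there are \emph{exactly} $(\#P_{e-s,C})^{\dim V} = q^{m\dim V}$, no Lang--Weil needed. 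Multiplying by the uniformly bounded fiber size gives $O_{d,n}(q^{(d-2)(n+1)m})$ with the constant depending only on $d,n$. This is the content you should supply in Step 2: a finite projection that is $\F_q$-linear (so it respects the norm constraints), rather than a coordinate projection plus stratification plus Lang--Weil.
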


\section{Putting everything together}

Recall that to prove Theorem \ref{thm:alternative}, it suffices to
check the sum over minor arcs goes to 0, namely
\[
q^{-(n+1)(e+1-g)}\sum_{\alpha:\deg(\alpha)>e-2g+1}S(\alpha)\to0.
\]

By combining Propositions \ref{prop:weyldifference}, \ref{prop:shrinking},
and \ref{prop:nsbound}, it follows that 
\[
S(\alpha)\le\left(\#P_{e,C}^{n+1}\right)^{1-\frac{d-1}{2^{d-1}}}O_{d,n}\left(q^{\frac{\left(n+1\right)\left((d-1)s+f(g)\right)+h^{0}(C,L(-s\infty))(d-2)(n+1)}{2^{d-1}}}\right)
\]
for the specific choice of $s$ used in the previous section (namely
from (\ref{eq:choiceofs})), and 
\[
f(g)=\begin{cases}
0 & \text{if }g=1,\\
\frac{g+1}{2} & \text{else}.
\end{cases}
\]
Then, 
\begin{align*}
 & \lim_{q\to\infty}q^{-(n+1)(e+1-g)}S(\alpha)\\
 & =\lim_{q\to\infty}q^{-(n+1)(e+1-g)}\sum_{\deg(\alpha)>e-2g+1}\left(\#P_{e,C}^{n+1}\right)^{1-\frac{d-1}{2^{d-1}}}O_{d,n}\left(q^{\frac{\left(n+1\right)\left((d-1)s+f(g)\right)+h^{0}(C,L(-s\infty))(d-2)(n+1)}{2^{d-1}}}\right)\\
 & =\lim_{q\to\infty}\sum_{\deg(\alpha)>e-2g+1}O_{d,n}\left(q^{(n+1)(e+1-g)\left(-\frac{d-1}{2^{d-1}}\right)+\frac{\left(n+1\right)\left((d-1)s+f(g)\right)+h^{0}(C,L(-s\infty))(d-2)(n+1)}{2^{d-1}}}\right).
\end{align*}
Note that the number of $\alpha$ such that $\deg(\alpha)=i$ is equal
to some function of $i$ times $q^{i}$ by Theorem \ref{thm:LW},
and, evidently, $q^{\deg Z}$ is an upper bound on the number of $\alpha$
that factor through a specific $Z$. So, it suffices to show that
the following quantity is $<0$: 

\begin{equation}
2\deg Z+\left(-\frac{d-1}{2^{d-1}}\right)(e+1-g)(n+1)+\frac{\left(n+1\right)\left((d-1)s+f(g)\right)+h^{0}(C,L(-s\infty))(d-2)(n+1)}{2^{d-1}}.\label{eq:boundthis}
\end{equation}

We'll continue our analysis depending on each of the possible tuples
of conditions on $g,d,e,$ and $n$ as in Theorem \ref{thm:main}.
Recall that 
\[
s=\max\left(\left\lfloor \frac{\deg Z-e+2g-2}{d-1}\right\rfloor ,\left\lfloor e-\frac{\deg Z}{d-1}\right\rfloor ,2g-2,1\right)+1.
\]
and that the first term dominates the first when $\deg Z>de/2-g+1$
and the second dominates the first when $\deg Z\le de/2-g+1$. 

\subsection{$d\ge4,g=1,e\ge2^{d}\left(d-1\right)^{2}+1,$ and $n\ge2^{d}\left(d-1\right)+1$}

For $e-2g+2\le\deg Z\le\frac{de}{2}-g+1$, note that $\left\lfloor e-\frac{\deg Z}{d-1}\right\rfloor \ge\left\lfloor e-\frac{\frac{de}{2}-g+1}{d-1}\right\rfloor \ge\left\lfloor \frac{\left(\frac{d}{2}-1\right)e}{d-1}\right\rfloor \ge\max\left(2g-2,1\right),$
so we have $s=\left\lfloor e-\frac{\deg Z}{d-1}\right\rfloor +1$
and hence 
\[
e-s=\left\lceil \frac{\deg Z}{d-1}\right\rceil -1.
\]
We have $e-s=\left\lceil \frac{\deg Z}{d-1}\right\rceil -1\ge\left\lceil \frac{e-2g+2}{d-1}\right\rceil -1,$
which, by assumption on $e$, is larger than $2g-2$. 

For $\frac{de}{2}-g+1<\deg Z\le\frac{de}{2}+1$, note that $\left\lfloor \frac{\deg Z-e+2g-2}{d-1}\right\rfloor \ge\left\lfloor \frac{\frac{de}{2}-g+1-e+2g-2}{d-1}\right\rfloor \ge\left\lfloor \frac{\left(\frac{d}{2}-1\right)e}{d-1}\right\rfloor \ge2g-2$,
we have $s=\left\lfloor \frac{\deg Z-e+2g-2}{d-1}\right\rfloor +1$
and hence 
\[
e-s=\left\lceil \frac{de-\deg Z-2g+2}{d-1}\right\rceil -1.
\]
We have $e-s=\left\lceil \frac{de-\deg Z-2g+2}{d-1}\right\rceil -1\ge\left\lceil \frac{\frac{de}{2}-1-2g+2}{d-1}\right\rceil -1,$
which, by assumption on $e$, is larger than $2g-2$. 

In this case, we have $f(g)=0$, and hence $\eqref{eq:boundthis}$
can be re-written to say that we need $n$ to be least 
\begin{equation}
\frac{2^{d}\deg Z}{(d-1)(e-g+1)-(d-1)s-(d-2)h^{0}\left(C,L(-s\infty)\right)}.\label{eq:fraction-1}
\end{equation}

Since $e-s>2g-2$, it follows that $h^{0}\left(C,L(-s\infty)\right)=e-s-g+1$.
The denominator of the fraction in (\ref{eq:fraction-1}) is then 

\[
e-s-g+1=\min\left(\left\lceil \frac{\deg Z}{d-1}\right\rceil ,\left\lceil \frac{de-\deg Z-2g+2}{d-1}\right\rceil \right)-g.
\]

For $e-2g+2\le\deg Z\le\frac{de}{2}-g+1$, we require 
\[
n\ge\frac{2^{d}\deg Z}{\left\lceil \frac{\deg Z}{d-1}\right\rceil -g}.
\]
If we write $\deg Z=a(d-1)-r$ with $0\le r\le d-2$, then the right-hand
side is largest when $r=0$. Moreover, the function $\frac{a(d-1)}{a-g}$
is decreasing in $a$, so we must have 
\[
n\ge\frac{2^{d}(d-1)\left\lceil \frac{e-2g+2}{d-1}\right\rceil }{\left\lceil \frac{e-2g+2}{d-1}\right\rceil -g}.
\]

For $\frac{de}{2}-g+1<\deg Z\le\frac{de}{2}+1,$ we have 

\[
n\ge\frac{2^{d}\deg Z}{\left\lceil \frac{de-\deg Z-2g+2}{d-1}\right\rceil -g}
\]
and $\frac{2^{d}\deg Z}{\frac{de-\deg Z-2g+2}{d-1}-g}\ge\frac{2^{d}\deg Z}{\left\lceil \frac{de-\deg Z-2g+2}{d-1}\right\rceil -g}.$
To see that the $e-2g+2\le\deg Z\le\frac{de}{2}-g+1$ range dominates,
we claim 
\[
\frac{2^{d}\left(\frac{de}{2}+1\right)}{\frac{\frac{de}{2}-2g+1}{d-1}-g}\le\frac{2^{d}(d-1)A}{A-g},
\]
where $A=\left\lceil \frac{e-2g+2}{d-1}\right\rceil $. Rewriting
this, we have 
\begin{equation}
A\le\frac{\frac{de}{2}+1}{d+1}.\label{eq:asdf}
\end{equation}
To check this, it suffices to show 
\[
e\le\frac{(d-1)d\left(\frac{e}{2}-1\right)}{d+1}=\frac{d(d-1)}{2(d+1)}e-\frac{d(d-1)}{d+1}.
\]
The right-hand side is linear in $e$ with larger slope, and so it
suffices to check it at the smallest value of $e$. For the chosen
range of $e$, this is evidently true. 

So it remains to show 
\[
2^{d}\left(d-1\right)+1\ge\frac{2^{d}(d-1)\left\lceil \frac{e}{d-1}\right\rceil }{\left\lceil \frac{e}{d-1}\right\rceil -1}
\]
for $e\ge2^{d}\left(d-1\right)^{2}+1$. But this is equivalent to
showing 
\[
A\ge2^{d}\left(d-1\right)+1,
\]
i.e. 
\[
e\ge2^{d}\left(d-1\right)^{2}+1.
\]

\subsection{$d\ge4,g\ge2,e\ge2^{d-1}\left(d-1\right)^{2}\left(3g+1\right)+d\left\lceil \frac{3g-1}{2}\right\rceil +\left\lfloor \frac{g-1}{2}\right\rfloor ,$
and $n\ge2^{d}\left(d-1\right)+1$}

The analysis is identical to the previous case except that we now
have $f(g)=\left(g+1\right)/2$ instead of 0. $\eqref{eq:boundthis}$
can be re-written to say that we need $n$ to be least 
\begin{equation}
\frac{2^{d}\deg Z}{(d-1)(e-g+1)-(d-1)s-(d-2)h^{0}\left(C,L(-s\infty)\right)-\frac{g+1}{2}}.\label{eq:fraction-1-2}
\end{equation}
Since $e-s>2g-2$, it follows that $h^{0}\left(C,L(-s\infty)\right)=e-s-g+1$.
The denominator of the fraction in (\ref{eq:fraction-1-2}) is then 

\[
e-s-g+1-\frac{g+1}{2}=\min\left(\left\lceil \frac{\deg Z}{d-1}\right\rceil ,\left\lceil \frac{de-\deg Z-2g+2}{d-1}\right\rceil \right)-\frac{3g+1}{2}.
\]
For $e-2g+2\le\deg Z\le\frac{de}{2}-g+1$, we require 
\[
n\ge\frac{2^{d}\deg Z}{\left\lceil \frac{\deg Z}{d-1}\right\rceil -\frac{3g+1}{2}}.
\]
If we write $\deg Z=a(d-1)-r$ with $0\le r\le d-2$, then the right-hand
side is largest when $r=0$. Moreover, the function $\frac{a(d-1)}{a-\frac{3g+1}{2}}$
is decreasing in $a$, so we must have 
\[
n\ge\frac{2^{d}(d-1)\left\lceil \frac{e-2g+2}{d-1}\right\rceil }{\left\lceil \frac{e-2g+2}{d-1}\right\rceil -\frac{3g+1}{2}}.
\]

For $\frac{de}{2}-g+1<\deg Z\le\frac{de}{2}+1,$ we have 

\[
n\ge\frac{2^{d}\deg Z}{\left\lceil \frac{de-\deg Z-2g+2}{d-1}\right\rceil -\frac{3g+1}{2}}
\]
and $\frac{2^{d}\deg Z}{\frac{de-\deg Z-2g+2}{d-1}-\frac{3g+1}{2}}\ge\frac{2^{d}\deg Z}{\left\lceil \frac{de-\deg Z-2g+2}{d-1}\right\rceil -\frac{3g+1}{2}}.$
To see that the $e-2g+2\le\deg Z\le\frac{de}{2}-g+1$ range dominates,
we claim 
\[
\frac{2^{d}\left(\frac{de}{2}+1\right)}{\frac{\frac{de}{2}-2g+1}{d-1}-\frac{3g+1}{2}}\le\frac{2^{d}(d-1)A}{A-\frac{3g+1}{2}},
\]
where $A=\left\lceil \frac{e-2g+2}{d-1}\right\rceil $. Rewriting
this, we have 
\[
A\le\frac{\frac{de}{2}+1}{d+\frac{g-1}{3g+1}}.
\]
But 
\[
\frac{\frac{de}{2}+1}{d+1}\le\frac{\frac{de}{2}+1}{d+\frac{g-1}{3g+1}},
\]
so the same argument (namely (\ref{eq:asdf})) in the previous case
shows that this is true for the chosen range of $e$.

So it remains to show 
\[
2^{d}\left(d-1\right)+1\ge\frac{2^{d}(d-1)\left\lceil \frac{e-2g+2}{d-1}\right\rceil }{\left\lceil \frac{e-2g+2}{d-1}\right\rceil -\frac{3g+1}{2}}
\]
for $e\ge2^{d-1}\left(d-1\right)^{2}\left(3g+1\right)+d\left\lceil \frac{3g-1}{2}\right\rceil +\left\lfloor \frac{g-1}{2}\right\rfloor $.
But this is equivalent to showing 
\[
A\ge\left(\frac{3g+1}{2}\right)\left(2^{d}\left(d-1\right)+1\right),
\]
i.e. 
\begin{align*}
e & \ge\left(d-1\right)\left(\left(3g+1\right)2^{d-1}\left(d-1\right)+\left\lceil \frac{3g+1}{2}\right\rceil \right)-\left(d-2\right)+\left(2g-2\right)\\
 & =2^{d-1}\left(d-1\right)^{2}\left(3g+1\right)+d\left\lceil \frac{3g-1}{2}\right\rceil +\left\lfloor \frac{g-1}{2}\right\rfloor .
\end{align*}

\subsection{$d=3,g=1,e\ge45,$ and $n\ge17$}

Similar to the previous case, since $\left\lfloor \frac{\left(\frac{d}{2}-1\right)e}{d-1}\right\rfloor \ge\max\left(2g-2,1\right)$,
the assumptions imply that 
\[
n\ge\frac{8\cdot2\cdot\left\lceil \frac{e-2g+2}{2}\right\rceil }{\left\lceil \frac{e-2g+2}{2}\right\rceil -g}
\]
for $e-2g+2\le\deg Z\le\frac{3e}{2}-g+1$ and 
\[
n\ge\frac{8\deg Z}{\left\lceil \frac{3e-\deg Z-2g+2}{2}\right\rceil -g}
\]
for $\frac{3e}{2}-g+1<\deg Z\le\frac{3e}{2}+1.$ In particular, we
have 
\[
n\ge8\max\left(\frac{2\left\lceil \frac{e}{2}\right\rceil -2g+2}{\left\lceil \frac{e}{2}\right\rceil -2g+1},\frac{\deg Z}{\left\lceil \frac{3e-\deg Z}{2}\right\rceil -2g+1}\right).
\]
Note that 
\[
\frac{\deg Z}{\left\lceil \frac{3e-\deg Z}{2}\right\rceil -2g+1}\le\frac{\deg Z}{\frac{3e-\deg Z}{2}-2g+1}\le\frac{\frac{3e}{2}+1}{\frac{3e/2-1}{2}-2g+1}=\frac{6e+4}{3e-2-8g+4},
\]
and we claim 
\[
\frac{2\left\lceil \frac{e}{2}\right\rceil -2g+2}{\left\lceil \frac{e}{2}\right\rceil -2g+1}\le\frac{6e+4}{3e-2-8g+4}.
\]
This is equivalent to 
\[
\left(\left\lceil \frac{e}{2}\right\rceil -g+1\right)\left(3e+2-8g\right)\le\left(3e+2\right)\left(\left\lceil \frac{e}{2}\right\rceil -2g+1\right),
\]
which we can rewrite as 
\[
3e+2\le8\left(\left\lceil \frac{e}{2}\right\rceil -g+1\right).
\]
But the right-hand side is at least $4e-8g+8$, which is at least
$3e+2$ by our assumption on $e$. So it remains to check that 

\[
17\ge16\cdot\frac{3e+2}{3e-2-8g+4}.
\]
for $e\ge45$, which is easy to verify. 

\subsection{$d=3,g\ge2,e\ge170g/3+32/3,$ and $n\ge17$}

Similar to the previous case, the assumptions imply that 
\[
n\ge\frac{8\cdot2\cdot\left\lceil \frac{e-2g+2}{2}\right\rceil }{\left\lceil \frac{e-2g+2}{2}\right\rceil -\frac{3g+1}{2}}
\]
for $e-2g+2\le\deg Z\le\frac{3e}{2}-g+1$ and 
\[
n\ge\frac{8\deg Z}{\left\lceil \frac{3e-\deg Z-2g+2}{2}\right\rceil -\frac{3g+1}{2}}
\]
for $\frac{3e}{2}-g+1<\deg Z\le\frac{3e}{2}+1.$ In particular, we
have 
\[
n\ge16\max\left(\frac{2\left\lceil \frac{e}{2}\right\rceil -2g+2}{2\left\lceil \frac{e}{2}\right\rceil -5g+1},\frac{\deg Z}{2\left\lceil \frac{3e-\deg Z}{2}\right\rceil -5g+1}\right).
\]
Note that 
\[
\frac{\deg Z}{2\left\lceil \frac{3e-\deg Z}{2}\right\rceil -5g+1}\le\frac{\deg Z}{3e-\deg Z-5g+1}\le\frac{\frac{3e}{2}+1}{\frac{3e}{2}-1-5g+1}=\frac{3e+2}{3e-10g},
\]
and we claim 
\[
\frac{2\left\lceil \frac{e}{2}\right\rceil -2g+2}{2\left\lceil \frac{e}{2}\right\rceil -5g+1}\le\frac{3e+2}{3e-10g}.
\]
This is equivalent to 
\[
\left(2\left\lceil \frac{e}{2}\right\rceil -2g+2\right)\left(3e-10g\right)\le\left(3e+2\right)\left(2\left\lceil \frac{e}{2}\right\rceil -5g+1\right),
\]
which we can rewrite as 
\[
9ge+20g^{2}-10g+3e-2\le\left(20g+4\right)\left\lceil \frac{e}{2}\right\rceil .
\]
But the right-hand side is at least $10ge+2e$, which is evidentally
larger than the left-hand side for the chosen ranges of $e$ and $g$.
So it remains to check that 

\[
17\ge16\cdot\frac{3e+2}{3e-10g}.
\]
for $e\ge170g/3+32/3$, which is easy to verify. 

\subsection{$d=2,g=1,e\ge14,$ and $n\ge5$}

For $e-2g+2\le\deg Z\le\frac{de}{2}-g+1$, note that only $\deg Z=e$
is the only possibility. Hence, in this range, $s=1+1=2.$ 

For $\frac{de}{2}-g+1<\deg Z\le\frac{de}{2}+1$, note that $\deg Z=e+1$
is the only possibility. Hence, similarly, we have $s=2$ in this
range. 

$\eqref{eq:boundthis}$ can be re-written to say that we need $n$
to be least 

\[
\frac{4\deg Z}{e-2}.
\]
Since $\deg Z\le e+1$, it remains to verify that 
\[
5\ge\frac{4e+4}{e-2},
\]
which is true for $e\ge14$.

\subsection{$d=2,g\ge2,e\ge35g/2-15/2,$ and $n\ge5$}

For $e-2g+2\le\deg Z\le\frac{de}{2}-g+1=e-g+1$, note that $e-\deg Z\le2g-2.$
Hence, in this range, $s=2g-1,$ so $e-s=e-2g+1.$ 

For $e-g+1=\frac{de}{2}-g+1<\deg Z\le\frac{de}{2}=e$, note that $\deg Z-e+2g-2\le2g-2$.
Hence, in this range, $s=2g-1$. 

For $\deg Z=\frac{de}{2}+1=e+1$, we have $\deg Z-e+2g-2=2g-1$, so
we take $s=2g$. 

$\eqref{eq:boundthis}$ can be re-written to say that we need $n$
to be least 

\[
\frac{4\deg Z}{e-g+1-s-\frac{g+1}{2}}.
\]
For the range $e-2g+2\le\deg Z\le e$, we have $s=2g-1$, so 
\[
\frac{4\deg Z}{e-g+1-s-\frac{g+1}{2}}\le\frac{8e}{2e-7g+3}.
\]
For $\deg Z=e+1$, we have $s=2g-2$, so 
\[
\frac{4\deg Z}{e-g+1-s-\frac{g+1}{2}}\le\frac{8e+8}{2e-7g+5}.
\]
For the chosen range of $e$ and $g$, it is evident that 
\[
5\ge\frac{8e}{2e-7g+3}\ge\frac{8e+8}{2e-7g+5},
\]
as desired.

This completes the proof of Theorem \ref{thm:main}.

\section{Application to the Fujita invariant}

In this section, we prove Corollary \ref{cor:anumber}.

Let $X\subset\P_{\C}^{n}$ be a smooth hypersurface satisfying the
conditions of the corollary. Our goal is to show that no proper subvariety
$V\subset X$ satisfies $a\left(V,-K_{X}|_{V}\right)\ge a\left(X,-K_{X}\right)=1$
(recall Definition \ref{def:fujita}). 

So, for the sake of contradiction, suppose $a\left(V,-K_{X}|_{V}\right)\ge1$
for some $V$. 

First, for any open subset $U\subset Y$ a variety and $C$ a smooth
projective curve, define $\Mor_{U}\left(C,Y\right)$ to be the space
of maps $C\to Y$ such that the image of $C$ intersects $U$. 
\begin{lem}
\label{lem:lowerboundim}Let $Y$ be a smooth variety of dimension
$n'$ and $f\colon C\to Y$ be a morphism. Suppose $-f^{*}K_{Y}\cdot C+\left(1-g(C)\right)n'-n'\ge0.$
Then, there is a non-empty open subset $U\subset Y$ with the image
of $f$ intersecting $U$, such that $\dim_{[f]}\Mor_{U}\left(C,Y\right)\ge-f^{*}K_{Y}\cdot C+\left(1-g(C)\right)n'$. 
\begin{proof}
For a fixed closed point $x\in C$, consider the morphism $\Mor\left(C,Y\right)\to Y$
that sends a map to the image of $x$ in $Y$. We can similarly define
$\Mor\left(C,Y,x\mapsto u\right)$ and $\Mor\left(C,Y,x\mapsto U\right)$
to denote maps that send $x$ to a fixed point $u\in U$ and those
that send $x$ to some point in $U$, respectively. We then have the
following two Cartesian diagrams:

\[\begin{tikzcd}          {\Mor(C,Y,x\mapsto u)} \ar[r] \ar[d] \ar[dr, phantom, "\square"] &  {\Mor(C,Y,x\mapsto U)} \ar[d] \ar[dr, phantom, "\square"] \ar[r] & {\Mor(C,Y)}\ar[d]  \\         u \ar[r]& U \ar[r] & Y     \end{tikzcd}\]Since
$-f^{*}K_{Y}\cdot C+\left(1-g(C)\right)n'-n'\ge0$, general lower
bounds on dimension for these moduli spaces (see Theorem 1.7 of \cite{Kollar_Rational_Curves})
implies for any $u\in Y$, every irreducible component of $\Mor\left(C,Y,x\mapsto u\right)$
at $[f]$ has dimension at least $-f^{*}K_{Y}\cdot C+\left(1-g(C)\right)n'-n'\ge0$,
i.e. is non-empty. This implies that $\Mor\left(C,Y\right)\to Y$
is dominant. Then, a standard fiber dimension theorem implies that
there is some dense open subset $U\subset Y$ so that 
\[
\dim_{[f]}\Mor\left(C,Y,x\mapsto U\right)=\dim U+\dim_{[f]}\Mor\left(C,Y,x\mapsto u\right).
\]
So 
\begin{align*}
\dim_{[f]}\Mor_{U}\left(C,Y\right) & \ge\dim_{[f]}\Mor\left(C,Y,x\mapsto U\right)\\
 & \ge n'-f^{*}K_{Y}\cdot C+\left(1-g\left(C\right)\right)n'-n'\\
 & =-f^{*}K_{Y}\cdot C+\left(1-g\left(C\right)\right)n',
\end{align*}
where in the second line we again use the lower bound on dimension
for these mapping spaces.
\end{proof}
\end{lem}

Now, suppose $Y\to V$ is a resolution of singularities. Then, $a\left(Y,-K_{X}|_{Y}\right)=a\left(V,-K_{X}|_{V}\right)\ge1$.
This implies $K_{Y}-K_{X}|_{Y}+\frac{1}{m}K_{X}|_{Y}$ is not pseudo-effective
for any integer $m>0$. By Theorem 0.2 of \cite{BDPP}, for a general
point $x\in Y$, we have a smooth projective curve $C_{m}$ passing
through $x$ such that 
\[
\left(-K_{Y}+K_{X}|_{Y}-\frac{1}{m}K_{X}|_{Y}\right)\cdot C_{m}>0,
\]
i.e. 
\begin{align}
\left(-K_{Y}+K_{X}|_{Y}\right)\cdot C_{m} & >\frac{1}{m}K_{X}|_{Y}\cdot C_{m}=\frac{1}{m}e_{m}(d-n-1)\label{eq:boundfromfujitaassump}
\end{align}
for some $e_{m}>0$ depending on $m$. 

Since we don't know anything about $e_{m}$, the next trick is to
use a combination of Frobenius pullbacks (which increase degree but
not genus) and Artin--Schreier covers (which increase both genus
and degree) to obtain good control over the degree. 

Let us fix a few seemingly arbitrary quantities. 

Choose $m=2\cdot4^{d}\left(n+1-d\right)+1$. 

Now, consider some dominant map of smooth projective curves $C\to C_{m}$
of any degree and $C$ of genus $g(C)$ larger than $2\cdot4^{d}\left(n+1-d\right)$,
such that the composition $C\to C_{m}$ with $C_{m}\to Y$ has degree
at least 2. Let this composition be called $h$, which is a morphism
from $C$ to $Y$. 

By abuse of notation, suppose the degree of $h$ is $e_{m}$. By the
same spreading out argument as in Section \ref{sec:Spreading-out-and},
we can assume we are working over characteristic $p$ with $p$ sufficiently
large. Specifically, choose $p$ large enough so that 
\[
2\cdot4^{d}\left(p-1\right)\left(n+1-d\right)+2\cdot4^{d}\left(n+1-d\right)+1<pg\left(C\right)
\]
(here, we are using the fact that $g\left(C\right)>2\cdot4^{d}\left(n+1-d\right)$). 

Consider $C'\to C$ an Artin--Schreier cover given by $y^{p}-y=u$
with $u\in K(C)$ chosen so that there is exactly one degree one point
$x\in C$ for which $v_{x}\left(u\right)=-m_{x}$ with $p\nmid m_{x}$
and $m_{x}$ a positive integer, and $v_{x'}(u)\ge0$ for all other
closed points $x'$ by Riemann--Roch. Then, by Proposition 3.7.8
of \cite{stichtenoth2009algebraic}, we have 
\[
g\left(C'\right)=pg\left(C\right)+\frac{p-1}{2}\left(-2+m_{x}+1\right).
\]

Let $b$ be large enough so that 
\[
e_{m}p^{b+1}\ge2\cdot4^{d}pg\left(C\right).
\]
Later, $b$ will correspond to the number of compositions of the Frobenius.

Note that increasing $m_{x}$ by 1 increases the expression $2\cdot4^{d}g\left(C'\right)$
by $\left(p-1\right)4^{d}$, so we can choose $m_{x}$ (since for
$m_{x}=1$, we already have $0\le e_{m}p^{b+1}-2\cdot4^{d}g\left(C'\right)$)
such that 
\begin{equation}
0\le e_{m}p^{b+1}-2\cdot4^{d}g\left(C'\right)<2\left(p-1\right)4^{d}\label{eq:lowerboundonem}
\end{equation}
(in case we get a multiple of $p$). Now, replace the map $C'\to C$
with the map $C'\to C'\to C$, where the first map $C'\to C'$ is
the $b$-fold Frobenius. 

Call this new map $k\colon C'\to C$ and write $h'\colon C'\to Y$
to be the composition $k\circ h$---recall that $h$ is a map sending
the curve $C$ to $Y$.
\begin{lem}
\label{lem:choiceofb}
\[
\frac{e_{m}p^{b+1}\left(n+1-d\right)}{m}<g\left(C'\right)-1.
\]
\begin{proof}
Since $m-2\cdot4^{d}\left(n+1-d\right)=1$, note that by assumption
on $p$, we have 
\begin{align*}
\frac{2\cdot4^{d}\left(p-1\right)\left(n+1-d\right)+m}{m-2\cdot4^{d}\left(n+1-d\right)} & <pg\left(C\right)\\
 & <pg\left(C\right)+\frac{p-1}{2}\left(m_{x}-1\right)\\
 & =g\left(C'\right).
\end{align*}
Rearranging this, we obtain 
\[
\frac{\left(2\cdot4^{d}g\left(C'\right)+2\left(p-1\right)4^{d}\right)\left(n+1-d\right)}{m}<g(C')-1,
\]
which implies 
\[
\frac{e_{m}p^{b+1}\left(n+1-d\right)}{m}<g\left(C'\right)-1.
\]
\end{proof}
\end{lem}

\begin{lem}
\label{lem:comparisonbounds}
\[
-h'^{*}K_{Y}\cdot C'+\left(1-g(C')\right)n'>-h'^{*}K_{X}|_{Y}\cdot C'+\left(1-g\left(C'\right)\right)(n-1).
\]
\begin{proof}
Observe that 
\[
\left(h'^{*}K_{Y}-h'^{*}K_{X}|_{Y}\right)\cdot C'<\frac{e_{m}p^{b+1}\left(n+1-d\right)}{m}<g\left(C'\right)-1
\]
by (\ref{eq:boundfromfujitaassump}) and Lemma \ref{lem:choiceofb}.
Rearranging, we have 
\[
\left(-h'^{*}K_{Y}+h'^{*}K_{X}|_{Y}\right)\cdot C'>1-g\left(C'\right)\ge\left(1-g(C')\right)\left(n-1-n'\right),
\]
since $g\left(C'\right)>1$ and $n-1>n'$. Rearranging again, we have
\[
-h'^{*}K_{Y}\cdot C'+\left(1-g(C')\right)n'>-h'^{*}K_{X}|_{Y}\cdot C'+\left(1-g\left(C'\right)\right)(n-1).
\]
\end{proof}
\end{lem}

Note that any morphism $C'\to Y$ gives rise to a morphism $C'\to V$
by composition with the resolution of singularities $Y\to V$. Moreover,
$Y\to V$ is birational, so for a sufficiently small open $U$, it
follows that 

\[
\dim_{[h']}\Mor\left(C',V\right)\ge\dim_{[h']}\Mor_{U}\left(C',Y\right).
\]

\begin{lem}
\label{lem:lowerbound}
\[
\dim_{[h']}\Mor\left(C',V\right)\ge-h'^{*}K_{Y}\cdot C'+\left(1-g(C')\right)n'.
\]
\begin{proof}
By the discussion above, it suffices to apply Lemma \ref{lem:lowerboundim}
to $\dim_{[h']}\Mor_{U}\left(C',Y\right)$. To check the condition
that $-h'^{*}K_{Y}\cdot C'+\left(1-g(C')\right)n'\ge n'$, note that
Lemma \ref{lem:comparisonbounds} tells us that 
\begin{align*}
-h'^{*}K_{Y}\cdot C'+\left(1-g(C')\right)n' & >-h'^{*}K_{X}|_{Y}\cdot C'+\left(1-g\left(C'\right)\right)(n-1)\\
 & =e_{m}p^{b+1}\left(n+1-d\right)+\left(1-g\left(C'\right)\right)(n-1)\\
 & \ge2\cdot4^{d}g\left(C'\right)\left(n+1-d\right)+\left(1-g\left(C'\right)\right)(n-1)\\
 & =g\left(C'\right)\left(2\cdot4^{d}\left(n+1-d\right)-n+1\right)+n-1\\
 & \ge n',
\end{align*}
where in the third line we use (\ref{eq:lowerboundonem}). 
\end{proof}
\end{lem}

\begin{lem}
\label{lem:upperbound}
\[
\dim_{[h']}\Mor\left(C',X\right)=-h'^{*}K_{X}|_{Y}\cdot C'+\left(1-g\left(C'\right)\right)(n-1).
\]
\begin{proof}
This follows immediately from Theorem \ref{thm:main}, since $h'$
has sufficiently large degree (using the condition $e_{m}p^{b+1}\ge2\cdot4^{d}g\left(C'\right)$,
which is at least $2^{d-1}\left(d-1\right)^{2}\left(3g+1\right)+d\left\lceil \frac{3g-1}{2}\right\rceil +\left\lfloor \frac{g-1}{2}\right\rfloor $).
\end{proof}
\end{lem}

Combining Lemmas \ref{lem:comparisonbounds}, \ref{lem:lowerbound},
and \ref{lem:upperbound}, it is clear that 
\[
\dim_{[h']}\Mor\left(C',V\right)>\dim_{[h']}\Mor\left(C',X\right),
\]
which is evidently a contradiction and hence completes the proof of
Corollary \ref{cor:anumber}.

\bibliographystyle{plainurl}
\bibliography{Higher_Genus_Circle}

\end{document}